\newcommand*{\mailto}[1]{\href{mailto:#1}{\nolinkurl{#1}}}
\def\theequation{\@arabic\c@equation}
\newcommand{\bbN}{{\mathbb{N}}}
\newcommand{\bbR}{{\mathbb{R}}}
\newcommand{\bbZ}{{\mathbb{Z}}}
\newcommand{\bbC}{{\mathbb{C}}}
\newcommand{\cB}{{\mathcal B}}
\newcommand{\cC}{{\mathcal C}}
\newcommand{\cF}{{\mathcal F}}
\newcommand{\cH}{{\mathcal H}}
\newcommand{\cK}{{\mathcal K}}
\newcommand{\no}{\nonumber}
\newcommand{\lb}{\label}
\newcommand{\f}{\frac}
\newcommand{\ol}{\overline}
\newcommand{\Oh}{O}
\newcommand{\tr}{\text{\rm{tr}}}
\newcommand{\rank}{\text{\rm{rank}}}
\newcommand{\ran}{\text{\rm{ran}}}
\newcommand{\dom}{\text{\rm{dom}}}
\newcommand{\bi}{\bibitem}
\newcommand{\sgn}{\text{\rm{sign}}}
\renewcommand{\Re}{\text{\rm Re}}
\renewcommand{\Im}{\text{\rm Im}}
\renewcommand{\ln}{\text{\rm ln}}
\DeclareMathOperator{\SL}{SL}
\renewcommand{\dot}{\overset{\textbf{\Large.}}}
\numberwithin{equation}{section}
\newtheorem{theorem}{Theorem}[section]
\newtheorem{lemma}[theorem]{Lemma}
\newtheorem{hypothesis}[theorem]{Hypothesis}
\newtheorem{example}[theorem]{Example}
\theoremstyle{remark}
\newtheorem{remark}[theorem]{Remark}
\begin{document}

\title[Computation of Traces, Determinants, and $\zeta$-Functions]{Effective Computation
of Traces, Determinants, and $\zeta$-Functions for Sturm--Liouville Operators}

\author[F.\ Gesztesy]{Fritz Gesztesy}
\address{Department of Mathematics,
Baylor University, One Bear Place \#97328,
Waco, TX 76798-7328, USA}
\email{\mailto{Fritz\_Gesztesy@baylor.edu}}
\urladdr{\url{http://www.baylor.edu/math/index.php?id=935340}}

\author[K.\ Kirsten]{Klaus Kirsten}
\address{GCAP-CASPER, Department of Mathematics,
Baylor University, One Bear Place \#97328,
Waco, TX 76798-7328, USA}
\email{\mailto{Klaus\_Kirsten@baylor.edu}}
\urladdr{\url{http://www.baylor.edu/math/index.php?id=54012}}

\dedicatory{Dedicated with great pleasure to Konstantin Makarov on the occasion of his
60th birthday.}
\date{\today}
\thanks{K.K.\ was supported by the Baylor University Summer Sabbatical Program.}
\thanks{Appeared in {\it J. Funct. Anal.} {\bf 276}, 520--562 (2019).}
\subjclass[2010]{Primary: 47A10, 47B10, 47G10. Secondary: 34B27, 34L40.}
\keywords{Traces, (modified) Fredholm determinants, semi-separable integral kernels,
Sturm--Liouville operators.}

\begin{abstract}
The principal aim in this paper is to develop an effective and unified approach to the computation of
traces of resolvents (and resolvent differences), Fredholm determinants, $\zeta$-functions, and
$\zeta$-function regularized determinants associated with linear operators in a Hilbert space. In
particular, we detail the connection between Fredholm and $\zeta$-function regularized determinants.

Concrete applications of our formalism to general (i.e., three-coefficient) regular
Sturm--Liouville operators on bounded intervals with various (separated and coupled) boundary
conditions, and Schr\"odinger operators on a half-line, are provided and further illustrated with
an array of examples.
\end{abstract}

\maketitle



\section{Introduction} \lb{14.s1}

{\it We dedicate this paper with great pleasure to Konstantin A.\ Makarov on the occasion of his 60th birthday. Happy Birthday, Konstantin, we hope our modest contribution to spectral theory
will cause some joy.}

\smallskip

The prime motivation for writing this paper originally was to detail the relationship between Fredholm
and $\zeta$-function regularized determinants of Hilbert space operators and providing a unified approach to these determinants as well as traces of resolvents.

The case of traces of resolvents (and resolvent differences), Fredholm determinants, $\zeta$-functions, and $\zeta$-function regularized determinants associated with linear operators in a Hilbert space is described in detail in Section \ref{14.s2}. In particular, under an appropriate trace class hypothesis on
the resolvent of a self-adjoint operator $S$ in $\cH$ we define its $\zeta$-function and derive a formula
for it in terms of the self-adjoint functional calculus and the resolvent of $S$. In the case of trace class resolvent differences for a pair of self-adjoint operators $(S_1,S_2)$ in $\cH$ we also describe the underlying relative spectral $\zeta$-function for such a pair and relate the corresponding
$\zeta$-function regularized relative determinant and a symmetrized Fredholm perturbation determinant.

In Section \ref{14.s3} we provide an exhaustive discussion of regular (three-coefficient) self-adjoint
Sturm--Liouville operators, that is, operators in $ L^2((a,b); rdx)$ associated with self-adjoint realizations of differential expressions of the type $\tau = r^{-1} [-(d/dx) p (d/dx) + q]$, with arbitrary (separated and coupled) self-adjoint boundary conditions on compact intervals $[a,b]$. Their traces of resolvents and associated perturbation determinants
are calculated for all self-adjoint boundary conditions in terms of concrete expressions involving a
canonical system of fundamental solutions $\phi(z,\, \cdot\,,a)$ and $\theta(z,\, \cdot \,,a)$ of
$\tau \psi = z \psi$, and special examples such as Floquet boundary conditions and the Krein--von Neumann extension are highlighted. The $\zeta$-function regularized determinants are determined for all self-adjoint
boundary conditions and a variety of concrete examples complete this section.

Our final Section \ref{14.s4} then illustrates some of the abstract notions in
Section \ref{14.s2} with the help of self-adjoint Schr\"odinger operators associated with
differential expressions of the type $-(d^2/dx^2) + q$ on the half-line $\bbR_+ =(0,\infty)$ with
short-range potentials $q$ (i.e., we treat the scattering theory situation). Again we study all self-adjoint boundary conditions at $x=0$. The assumed short-range nature of $q$ then necessitates a comparison with the case $q = 0$ and thus illustrates the case of relative perturbation determinants, relative
$\zeta$-functions, and relative $\zeta$-function regularized determinants abstractly discussed in
Section \ref{14.s2}.

Finally, we summarize some of the basic notation used in this paper (especially, in Section \ref{14.s2}): Let $\cH$ and $\cK$
be separable complex Hilbert spaces, $(\,\cdot\,,\,\cdot\,)_{\cH}$ and
$(\,\cdot\,,\,\cdot\,)_{\cK}$ the scalar products in $\cH$ and $\cK$ (linear in
the second factor), and
$I_{\cH}$ and $I_{\cK}$ the identity operators in $\cH$ and $\cK$,
respectively.

Next, let $T$ be a closed linear operator from
$\dom(T)\subseteq\cH$ to $\ran(T)\subseteq\cK$, with $\dom(T)$
and $\ran(T)$ denoting the domain and range of $T$. The closure of a
closable operator $S$ is denoted by $\ol S$. The kernel (null space) of $T$
is denoted by $\ker(T)$.

The spectrum, point spectrum, and resolvent set of a closed linear
operator in $\cH$ will be denoted by $\sigma(\cdot)$, $\sigma_p(\cdot)$, and $\rho(\cdot)$; the
discrete spectrum of $T$ (i.e., points in $\sigma_p(T)$ which are isolated from the rest of
$\sigma(T)$, and which are eigenvalues of $T$ of finite algebraic multiplicity) is
abbreviated by $\sigma_d(T)$. The {\it algebraic multiplicity} $m_a(z_0; T)$ of an eigenvalue
$z_0\in\sigma_d(T)$ is the dimension of the range of the corresponding {\it Riesz projection}
$P(z_0;T)$,
\begin{equation}
m_a(z_0; T) = \dim(\ran(P(z_0;T))) = \tr_{\cH}(P(z_0;T)),
\end{equation}
where (with the symbol $ \ointctrclockwise$ denoting counterclockwise oriented
contour integrals)
\begin{equation}
 P(z_0;T)=\f{-1}{2\pi i} \ointctrclockwise_{C(z_0;\varepsilon)} d\zeta \,
(T - \zeta I_{\cH})^{-1},
\end{equation}
for $0 < \varepsilon<\varepsilon_0$ and  $D(z_0;\varepsilon_0) \backslash \{z_0\}\subset \rho(T)$; here
$D(z_0; r_0) \subset \bbC$ is the open disk with center $z_0$ and radius
$r_0 > 0$, and $C(z_0; r_0) = \partial D(z_0; r_0)$ the corresponding circle.
The  {\it geometric multiplicity} $m_g(z_0; T)$ of an eigenvalue
$z_0 \in \sigma_p(T)$ is defined by
\begin{equation}
m_g(z_0; T) = \dim(\ker((T - z_0 I_{\cH}))).
\end{equation}
The essential spectrum of $T$ is defined by $\sigma_{ess}(T) = \sigma(T)\backslash \sigma_d(T)$.

The Banach spaces of bounded
and compact linear operators in $\cH$ are denoted by $\cB(\cH)$ and
$\cB_\infty(\cH)$, respectively. Similarly, the Schatten--von Neumann
(trace) ideals will subsequently be denoted by $\cB_p(\cH)$,
$p \in [1,\infty)$, and the subspace of all finite rank operators in $\cB_1(\cH)$ will be
abbreviated by $\cF(\cH)$. Analogous notation $\cB(\cH_1,\cH_2)$,
$\cB_\infty (\cH_1,\cH_2)$, etc., will be used for bounded, compact, etc.,
operators between two Hilbert spaces $\cH_1$ and $\cH_2$. In addition,
$\tr_{\cH}(T)$ denotes the trace of a trace class operator $T\in\cB_1(\cH)$,
${\det}_{\cH} (I_{\cH} - T)$ the Fredholm determinant of $I_{\cH} - T$.

Finally, we find it convenient to abbreviate $\bbN_0 = \bbN \cup \{0\}$.

\section{Traces, Fredholm Determinants, and Zeta Functions of Operators} \lb{14.s2}

In this section we recall some well-known formulas relating traces and Fredholm
determinants and also discuss the notion of $\zeta$-functions of self-adjoint operators. For background
on the material used in this section see, for instance, \cite{GGK96}, \cite{GGK97}, \cite[Ch.\ XIII]{GGK00},
\cite[Ch.~IV]{GK69}, \cite{Ku61}, \cite[Ch.\ 17]{RS78}, \cite{Si77}, \cite[Ch.\ 3]{Si05}.

To set the stage we start with densely defined, closed, linear operators $A$ in $\cH$ having a trace class resolvent, and hence introduce the following assumption:

\begin{hypothesis} \lb{14.h2.1}
Suppose that $A$ is densely defined and closed in $\cH$ with $\rho(A) \neq \emptyset$, and
$(A - z I_{\cH})^{-1} \in \cB_1(\cH)$ for some $($and
hence for all\,$)$ $z \in \rho(A)$.
\end{hypothesis}

Given Hypothesis \ref{14.h2.1} and $z_0 \in \rho(A)$, consider the bounded, entire family $A(\, \cdot \,)$ defined by
\begin{equation}
A(z) := I_{\cH} - (A - z I_{\cH})(A - z_0 I_{\cH})^{-1} = (z - z_0) (A - z_0 I_{\cH})^{-1}, \quad
z \in \bbC.     \lb{14.2.1}
\end{equation}
Employing the formula (cf.\ \cite[Sect.~IV.1]{GK69}, see also \cite{Ku61}, \cite[Sect.~I.7]{Ya92}),
\begin{equation}
\tr_{\cH}\big((I_{\cH} - T(z))^{-1} T'(z)\big) = - (d/dz) \ln({\det}_{\cH}(I_{\cH} - T(z))),   \lb{14.2.2}
\end{equation}
valid for a trace class-valued analytic family $T(\, \cdot \,)$ on an open set $\Omega \subset \bbC$
such that $(I_{\cH} - T(\, \cdot \,))^{-1} \in \cB(\cH)$, and applying it to the entire family $A(\, \cdot \,)$ then results in
\begin{align}
\tr_{\cH}\big((A - z I_{\cH})^{-1}\big) &= - (d/dz) \ln\big({\det}_{\cH}\big(I_{\cH} - (z - z_0) (A - z_0 I_{\cH})^{-1}\big)\big)   \no \\
&= - (d/dz) \ln\big({\det}_{\cH}\big((A - z I_{\cH}) (A - z_0 I_{\cH})^{-1}\big)\big),   \lb{14.2.3} \\
& \hspace*{5.75cm} z \in \rho(A).    \no
\end{align}

One notes that the left- and hence the right-hand side of \eqref{14.2.3} is independent of the choice
of $z_0 \in \rho(A)$.

Next, following the proof of \cite[Theorem~3.5\,(c)]{Si05} step by step, and employing a
Weierstrass-type product formula (see, e.g., \cite[Theorem~3.7]{Si05}), yields the subsequent
result (see also \cite{GW95}).

\begin{lemma} \lb{14.l2.2}
Assume Hypothesis \ref{14.h2.1} and let $\lambda_k \in \sigma(A)$ then
\begin{equation} \lb{14.2.4}
{\det}_{\cH} \big(I_{\cH} - (z-z_0)(A - z_0 I_{\cH})^{-1}\big)
\underset{z \to \lambda_k}{=} (\lambda_k-z)^{m_a(\lambda_k)}
[C_k +O(\lambda_k - z)], \quad C_k\neq 0,
\end{equation}
that is, the multiplicity of the zero
 of the Fredholm determinant ${\det}_{\cH}\big(I_{\cH} - (z-z_0)(A - z_0 I_{\cH})^{-1}\big)$
at $z=\lambda_k$ equals the algebraic multiplicity of the eigenvalue $\lambda_k$ of $A$.

In addition, denote the spectrum of $A$
by $\sigma(A)=\{\lambda_k\}_{k\in\bbN}$, $\lambda_k \neq \lambda_{k'}$
for $k \neq k'$. Then
\begin{align}
\begin{split}
{\det}_{\cH}(I_{\cH} - (z-z_0)(A - z_0 I_{\cH})^{-1}) &= \prod_{k \in \bbN}
\big[1-(z-z_0)(\lambda_k - z_0)^{-1}\big]^{m_a(\lambda_k)} \\
&= \prod_{k \in \bbN} \bigg(\f{\lambda_k - z}{\lambda_k-z_0}
\bigg)^{m_a(\lambda_k)},    \lb{14.2.5}
\end{split}
\end{align}
with absolutely convergent products in \eqref{14.2.5}.
\end{lemma}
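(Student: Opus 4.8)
The plan is to reduce the entire statement to the trace class operator $B := (A - z_0 I_{\cH})^{-1}$ and to the Weierstrass-type product expansion of its Fredholm determinant. First I would record that, by Hypothesis~\ref{14.h2.1}, $B \in \cB_1(\cH)$, so the entire family $(z - z_0) B$ is trace class-valued and $D(z) := {\det}_{\cH}(I_{\cH} - (z - z_0) B)$ is an entire function of $z$ by continuity of the determinant on $\cB_1(\cH)$. Since $B$ is compact, $\sigma(A) = \sigma_d(A)$ consists of isolated eigenvalues $\{\lambda_k\}_{k \in \bbN}$ of finite algebraic multiplicity with no finite accumulation point, and, because $I_{\cH} - (z-z_0)B = (A - z I_{\cH})(A - z_0 I_{\cH})^{-1}$ by \eqref{14.2.1}, one has $D(z) = 0$ precisely when $z \in \sigma(A)$.

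The decisive preliminary step — and the one I expect to be the main obstacle — is the bookkeeping of multiplicities. I would show that the nonzero points of $\sigma(B)$ are exactly $\mu_k := (\lambda_k - z_0)^{-1}$ and that
\[ m_a(\mu_k; B) = m_a(\lambda_k; A) = m_a(\lambda_k). \]
Since $z_0 \in \rho(A)$, the map $\zeta \mapsto (\zeta - z_0)^{-1}$ is analytic and injective near each $\lambda_k$, and a contour-deformation argument in the Riesz--Dunford functional calculus identifies the Riesz projection $P(\mu_k; B)$ with $P(\lambda_k; A)$; taking traces then yields the displayed equality. The subtlety is that $A$ is only assumed closed, so $m_a$ and $m_g$ may genuinely differ, and one must transport the full \emph{algebraic} multiplicity (the rank of the Riesz projection) rather than the dimension of the eigenspace.

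With the multiplicities identified I would invoke the Weierstrass product formula for trace class operators, \cite[Theorem~3.7]{Si05}: for $w \in \bbC$,
\[ {\det}_{\cH}(I_{\cH} - w B) = \prod_j (1 - w \mu_j(B)), \]
the product running over the nonzero eigenvalues of $B$ repeated according to algebraic multiplicity and converging absolutely and locally uniformly in $w$, since $\sum_j |\mu_j(B)| \leq \norm{B}_{\cB_1(\cH)} < \infty$ by Weyl's inequality. Grouping equal eigenvalues, inserting the identification of the previous paragraph, and substituting $w = z - z_0$ together with $\mu_k = (\lambda_k - z_0)^{-1}$ turns the right-hand side into $\prod_k (1 - (z - z_0)(\lambda_k - z_0)^{-1})^{m_a(\lambda_k)} = \prod_k ((\lambda_k - z)/(\lambda_k - z_0))^{m_a(\lambda_k)}$, which is exactly \eqref{14.2.5} with the asserted absolute convergence.

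Finally, \eqref{14.2.4} would follow by reading off the local behavior of this product at a fixed $\lambda_k$. Splitting off the $k$-th factor and writing $G_k(z) := \prod_{j \neq k} ((\lambda_j - z)/(\lambda_j - z_0))^{m_a(\lambda_j)}$, absolute convergence together with $\lambda_j \neq \lambda_k$ for $j \neq k$ makes $G_k$ analytic and nonvanishing at $\lambda_k$, so
\[ D(z) \underset{z \to \lambda_k}{=} (\lambda_k - z)^{m_a(\lambda_k)} [C_k + O(\lambda_k - z)], \quad C_k = (\lambda_k - z_0)^{-m_a(\lambda_k)} G_k(\lambda_k) \neq 0, \]
which is \eqref{14.2.4}. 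Equivalently — the route suggested by \cite[Theorem~3.5\,(c)]{Si05} — one may argue locally by decomposing $\cH = \ran(P(\lambda_k; A)) \dotplus \ker(P(\lambda_k; A))$ into $B$-invariant subspaces and using multiplicativity of the determinant: on the $m_a(\lambda_k)$-dimensional range factor the determinant reduces to $(1 - (z - z_0)\mu_k)^{m_a(\lambda_k)}$, which vanishes to order exactly $m_a(\lambda_k)$ at $\lambda_k$, while the complementary factor stays nonzero there because $\mu_k \notin \sigma(B|_{\ker(P(\lambda_k; A))})$. Either way, the real content sits in the multiplicity identification of the second paragraph; the rest is assembly.
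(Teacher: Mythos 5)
Your proposal is correct and follows essentially the same route as the paper, whose proof consists precisely of invoking the two results you cite: Simon's Theorem~3.5\,(c) for the local multiplicity statement and the Weierstrass-type product formula of Theorem~3.7, with the Riesz-projection transport of algebraic multiplicities from $A$ to $(A - z_0 I_{\cH})^{-1}$ supplying the same bookkeeping you correctly flag as the main point. Deriving \eqref{14.2.4} by splitting the $k$-th factor off the absolutely convergent product \eqref{14.2.5}, rather than running the Theorem~3.5\,(c) argument separately, is a harmless reorganization of identical ingredients.
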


In many cases of interest not a single resolvent, but a difference of two
resolvents is trace class and hence one is naturally led to the following generalization discussed in detail in \cite{GLMZ05}, \cite{GN12a},
\cite{GZ12} (see also \cite{GN12}). To avoid technicalities, we will now consider the case of self-adjoint operators $A, B$ below, but note that \cite{GLMZ05} considers the general case of densely defined, closed linear operators with nonempty resolvent sets:

\begin{hypothesis} \lb{14.h2.4}
Suppose $A$ and $B$ are self-adjoint operators in $\cH$ with $A$ bounded from below. \\
$(i)$ Assume that $B$ can be written as the form sum $($denoted by $+_q$$)$
of $A$ and a self-adjoint operator $W$ in $\cH$
\begin{equation}
B = A +_q W,     \lb{14.2.1A}
\end{equation}
where $W$ can be factorized into
\begin{equation}
W = W_1^* W_2,    \lb{14.2.2A}
\end{equation}
such that
\begin{equation}
\dom(W_j) \supseteq \dom\big(|A|^{1/2}\big), \quad j =1,2.   \lb{14.2.2aA}
\end{equation}
$(ii)$ Suppose that for some $($and hence for all\,$)$ $z \in \rho(A)$,
\begin{equation}
W_2 (A - z I_{\cH})^{-1/2}, \, \ol{(A - z I_{\cH})^{-1/2} W_1^*} \in \cB_2(\cH) .    \lb{14.2.3A}
\end{equation}
\end{hypothesis}

\medskip

Given Hypothesis \ref{14.h2.4}, one observes that
\begin{equation}
\dom\big(|B|^{1/2}\big) = \dom\big(|A|^{1/2}\big),    \lb{14.2.5A}
\end{equation}
and that the resolvent of $B$ can be written as (cf., e.g., the detailed discussion in
\cite{GLMZ05} and the references therein)
\begin{align}
(B - z I_{\cH})^{-1} &= (A - z I_{\cH})^{-1}    \no \\
& \quad - \ol{(A - z I_{\cH})^{-1} W_1^*}
\big[I_{\cH} + \ol{W_2 (A - z I_{\cH})^{-1} W_1^*}\big]^{-1} W_2 (A - z I_{\cH})^{-1},
\no\\
& \hspace*{6.7cm} z \in \rho(B) \cap \rho(A).    \lb{14.2.6A}
\end{align}
We also note the analog of Tiktopoulos' formula (cf.\ \cite[p.~45]{Si71}),
\begin{align}
\begin{split}
(B - z I_{\cH})^{-1} &= (A - z I_{\cH})^{-1/2}
\big[I_{\cH} + \ol{(A - z I_{\cH})^{-1/2} W (A - z I_{\cH})^{-1/2}}\big]^{-1}   \\
& \quad \times (A - z I_{\cH})^{-1/2}, \quad z \in \rho(B) \cap \rho(A).
\lb{14.2.7A}
\end{split}
\end{align}
Here the closures in \eqref{14.2.6A} and \eqref{14.2.7A} are well-defined
employing \eqref{14.2.3A}. In addition, one observes that the resolvent formulas \eqref{14.2.6A} and \eqref{14.2.7A} are symmetric with respect
to $A$ and $B$ employing $A = B -_q W$.

As a consequence, $B$ is bounded from below in $\cH$ and one concludes that
for some $($and hence for all\,$)$ $z \in \rho(B) \cap \rho(A)$,
\begin{equation}
\big[(B - z I_{\cH})^{-1} - (A - z I_{\cH})^{-1}\big] \in \cB_1(\cH).  \lb{14.2.4A}
\end{equation}

Moreover, one infers that (cf.\ \cite{GZ12})
\begin{align}
& {\tr}_{\cH}\big((B - z I_{\cH})^{-1} - (A - z I_{\cH})^{-1}\big)   \no \\
& \quad
= - \f{d}{dz} \ln\Big({\det}_{\cH}\Big(\ol{(B - z I_{\cH})^{1/2}(A - z I_{\cH})^{-1}
	(B - z I_{\cH})^{1/2}}\Big)\Big)    \lb{14.2.8B} \\
&\quad =  - \f{d}{dz} \ln\Big({\det}_{\cH}\Big(I_{\cH} +\ol{W_2 (A - z I_{\cH})^{-1}
W_1^*}\Big)\Big), \quad z \in \rho(B) \cap \rho(A).     \lb{14.2.8A}
\end{align}
Here any choice of branch cut of the normal operator
$(B - z I_{\cH})^{1/2}$ (employing the spectral theorem) is permissible.
The first equality in \eqref{14.2.8A} follows as in the proof of
\cite[Theorem~2.8]{GZ12}. (The details are actually a bit simpler now
since $A, B$ are self-adjoint and bounded from below, and hence of positive type after some
translation, which is the case considered in \cite{GZ12}). For completeness, we mention that the second
equality in \eqref{14.2.8A} can be arrived at as follows: Employing the commutation formula (cf., \cite{De78}),
\begin{equation}
C[I_{\cH} - DC]^{-1} D = - I_{\cH} + [I_{\cH} - CD]^{-1}
\end{equation}
for $C, D \in \cB(\cH)$ with $1 \in \rho(DC)$ (and hence $1 \in \rho(CD)$
since $\sigma(CD)\backslash \{0\} = \sigma(DC)\backslash \{0\}$),
the resolvent identity \eqref{14.2.7A} with $A$ and $B$ interchanged yields
\begin{align}
& \ol{(B - z I_{\cH})^{1/2}(A - z I_{\cH})^{-1} (B - z I_{\cH})^{1/2}}   \no \\
& \quad = \big[I_{\cH} - \ol{(B - z I_{\cH})^{-1/2}
W (B - z I_{\cH})^{-1/2}}\big]^{-1}  \no \\
& \quad = \Big[I_{\cH} - \ol{\big[(B - z I_{\cH})^{-1/2} W_1^*}\big]
\big[W_2 (B - z I_{\cH})^{-1/2}\big]\Big]^{-1}.
\end{align}
On the other hand, it is well-known (cf.\ \cite{GLMZ05}, \cite{Ka66}) that
\begin{align}
& I_{\cH} + \ol{W_2 (A - z I_{\cH})^{-1} W_1^*}   \no \\
& \quad =\big[I_{\cH} - \ol{W_2 (B - z I_{\cH})^{-1} W_1^*}\big]^{-1}   \no \\
& \quad = \Big[I_{\cH} - \big[W_2 (B - z I_{\cH})^{-1/2}\big]
\ol{\big[(B - z I_{\cH})^{-1/2} W_1^*\big]}\Big]^{-1},
\end{align}
and hence using the fact
\begin{equation}
{\det}_{\cH} (I_{\cH} - ST) = {\det}_{\cH} (I_{\cH} - TS)  \lb{14.2.8AA}
\end{equation}
for $S, T \in \cB(\cH)$ with $ST, TS \in \cB_1(\cH)$ (again, since
$\sigma(ST)\backslash \{0\} = \sigma(TS)\backslash \{0\}$) then proves the
second equality in \eqref{14.2.8A}.

While this approach based on Hypothesis \ref{14.h2.4} is tailor-made for a discussion of perturbations
of the potential coefficient in the context of Schr\"odinger and, more generally, Sturm--Liouville operators, we also mention the following variant that is best suited for changes in the boundary conditions:

\begin{hypothesis} \lb{14.h2.4a}
Suppose that $A$ and $B$ are self-adjoint operators in $\cH$
bounded from below. In addition, assume that
\begin{align}
& \dom\big(|A|^{1/2}\big) \subseteq
\dom\big(|B|^{1/2}\big),     \lb{14.2.21a} \\
\begin{split}
& \ol{(B - t I_{\cH})^{1/2} \big[(B - t I_{\cH})^{-1}
- (A - t I_{\cH})^{-1}\big]
	(B - t I_{\cH})^{1/2}} \in \cB_1(\cH)    \lb{14.2.22a} \\
& \quad \text{for some $t < \inf(\sigma(A) \cup \sigma(B))$.}
\end{split}
\end{align}
\end{hypothesis}

Given Hypothesis \ref{14.h2.4a} it has been proven in \cite{GZ12}
(actually, in a more general context) that for
$z \in \bbC \backslash [\inf(\sigma(A) \cup \sigma(B)),\infty)$,
\begin{align}
& \ol{(B - z I_{\cH})^{1/2} \big[(B - z I_{\cH})^{-1} - (A - z I_{\cH})^{-1}\big]
	(B - z I_{\cH})^{1/2}}     \no \\
	& \quad = I_{\cH} - \ol{(B - z I_{\cH})^{1/2} (A - z I_{\cH})^{-1}	
(B - z I_{\cH})^{1/2}}       \lb{14.2.23a} \\
& \quad = I_{\cH} - \big[(B - z I_{\cH})^{1/2} (A - z I_{\cH})^{-1/2}\big]	
\big[(B - {\ol z} I_{\cH})^{1/2} (A - {\ol z} I_{\cH})^{-1/2}\big]^*   \no
\end{align}
implying that ${\det}_{\cH}\big(\ol{(B - z I_{\cH})^{1/2}(A - z I_{\cH})^{-1}
(B - z I_{\cH})^{1/2}}\big)$ is well-defined. Moreover,
\begin{align}
& {\tr}_{\cH}\big((B - z I_{\cH})^{-1} - (A - z I_{\cH})^{-1}\big)   \no \\
& \quad = - \f{d}{dz} \ln\Big({\det}_{\cH}\Big(\ol{(B - z I_{\cH})^{1/2}(A - z I_{\cH})^{-1}
	(B - z I_{\cH})^{1/2}}\Big)\Big),   \lb{14.2.24a} \\
& \hspace*{5cm }
 z \in \bbC \backslash [\inf(\sigma(A) \cup \sigma(B)),\infty).    \no
\end{align}

Next we briefly turn to spectral $\zeta$-functions of self-adjoint operators $S$ with a trace class resolvent (and hence purely discrete spectrum).

\begin{hypothesis} \lb{14.h2.5}
Suppose $S$ is a self-adjoint operator in $\cH$, bounded from below, satisfying
\begin{equation}
(S- z I_{\cH})^{-1} \in \cB_1(\cH)
\end{equation}
for some $($and hence for all\,$)$ $z \in \rho(S)$.
We denote the spectrum of $S$ by $\sigma(S) = \{\lambda_j\}_{j \in J}$ $($with $J \subset \bbZ$ an appropriate index set\,$)$, with every eigenvalue repeated according to its multiplicity. 
\end{hypothesis}

Given Hypothesis \ref{14.h2.5}, the spectral
zeta function of $S$ is then defined by
\begin{equation}
\zeta (s; S) := \sum_{\substack{j \in J \\ \lambda_j \neq 0}} \lambda_j^{-s}    \lb{14.2.8}
\end{equation}
for $\Re(s) > 0$ sufficiently large such that \eqref{14.2.8} converges absolutely.

Next, let $P(0; S)$ be the spectral projection of $S$ corresponding to
the eigenvalue $0$ and denote by $m(\lambda_0; S)$ the multiplicity of the eigenvalue $\lambda_0$ of $S$, in particular,
\begin{equation}
m(0; S) = \dim(\ker(S)).
\end{equation}
(One recalls that since $S$ is self-adjoint, the geometric and algebraic multiplicity of each eigenvalue of $S$ coincide and hence the subscript ``$g$'' or ``$a$'' is simply omitted from $m(\, \cdot \,; S)$.) In addition, we introduce the simple contour $\gamma$ encircling $\sigma(S) \backslash \{0\}$  in a 
counterclockwise manner so as to dip under (and hence avoid) the point $0$ (cf.\ Figure \ref{14.2.fig1}). In fact, following \cite{KM04} (see also \cite{KM03}), we will henceforth choose as the branch cut of $z^{-s}$ the ray
\begin{equation}
R_{\theta} = \big\{z = t e^{i \theta} \big| t \in [0,\infty)\big\}, \quad
\theta \in (\pi/2, \pi),  \lb{14.2.10}
\end{equation}
and note that the contour $\gamma$ avoids any contact with $R_{\theta}$
(cf.\ Figure \ref{14.2.fig1}).
\begin{figure}[H]
\setlength{\unitlength}{1cm}

\begin{center}

\begin{picture}(20,10)(0,0)

\put(0,0){\setlength{\unitlength}{1.0cm}
\begin{picture}(10,7.5)
\thicklines

\put(0,4){\vector(1,0){10}} \put(5.0,0){\vector(0,1){8}}
\put(10.0,4){\oval(9.4,1)[tl]} \put(5,4){\oval(0.6,0.5)[b]}
\put(4.4,4){\oval(0.6,1)[tr]} \put(4.4,4){\oval(3.0,1)[l]}
\put(7.5,4.5){\vector(-1,0){.4}}\put(4.4,3.5){\line(1,0){5.6}}
\put(5.0,4.0){\line(-1,2){2.0}} \put(.7,8.2){{\bf The cut $R_{\theta}$ for
$z^{-s}$}} \multiput(5.6,4)(.4,0){10}{\circle*{.15}}
\multiput(3.5,4)(.4,0){3}{\circle*{.15}}
\put(8.0,7){{\bf $z$-plane}}
\put(7.15,4.8){{\bf $\gamma$}}

\end{picture}}

\end{picture}

\caption{Contour $\gamma$ in the complex $z$-plane.} \lb{14.2.fig1}

\end{center}

\end{figure}
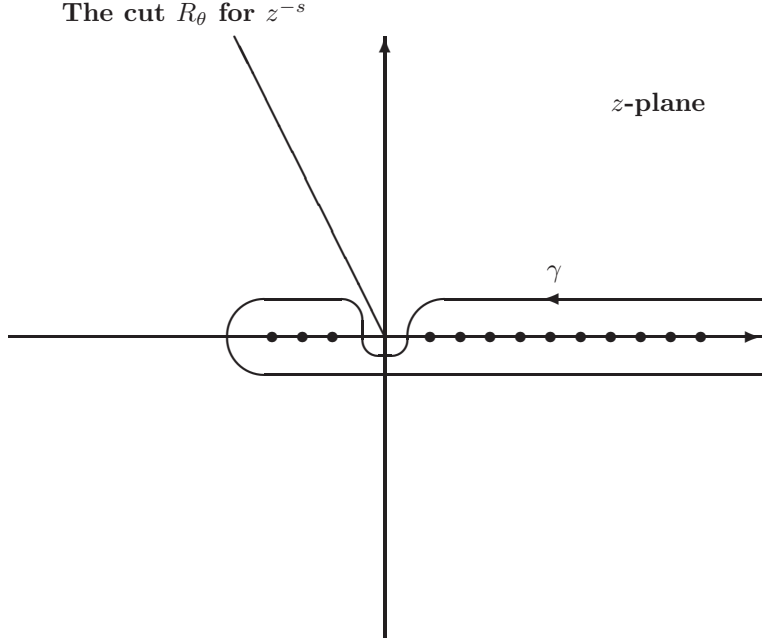

We note in passing that one could also use a semigroup approach via
\begin{align}
\begin{split}
\zeta(s; S) &= \Gamma(s)^{-1} \int_0^{\infty} dt \, t^{s-1} \tr_{\cH}\big(e^{- t S}
[I_{\cH} - P(0; S)]\big)   \lb{14.2.12} \\
&= \Gamma(s)^{-1} \int_0^{\infty} dt \, t^{s-1} \big[\tr_{\cH}\big(e^{- t S}\big) - m(0; S)\big],
\end{split}
\end{align}
for $\Re(s) > 0$ sufficiently large, but we prefer to work with resolvents in this paper.

\begin{lemma} \lb{14.l2.6}
In addition to Hypothesis \ref{14.h2.5} and the counterclockwise oriented contour $\gamma$ just described $($cf.\ Figure \ref{14.2.fig1}$)$, suppose that
$\big|{\tr}_{\cH}\big((S- z I_{\cH})^{-1} [I_{\cH} - P(0; S)]\big)\big|$
is polynomially bounded on $\gamma$. Then
\begin{equation}
\zeta(s; S) = - (2 \pi i)^{-1}  \ointctrclockwise_{\gamma} dz \, z^{-s} \big[\tr_{\cH}\big((S - z I_{\cH})^{-1}\big) + z^{-1} m(0; S)\big]   \lb{14.2.26}
\end{equation}
for $\Re(s) > 0$ sufficiently large.
\end{lemma}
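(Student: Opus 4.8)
The plan is to turn the right-hand side of \eqref{14.2.26} into a sum of residues, one located at each nonzero eigenvalue of $S$, and to recognize the resulting series as the defining sum \eqref{14.2.8} of $\zeta(s;S)$.

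First I would make the integrand explicit. Since $S$ is self-adjoint with purely discrete spectrum and $(S - z I_{\cH})^{-1} \in \cB_1(\cH)$ by Hypothesis \ref{14.h2.5}, the spectral theorem yields the absolutely convergent expansion
\begin{equation*}
\tr_{\cH}\big((S - z I_{\cH})^{-1}\big) = \sum_{j \in J} (\lambda_j - z)^{-1}, \quad z \in \rho(S),
\end{equation*}
with eigenvalues repeated according to multiplicity. The $m(0;S)$ indices with $\lambda_j = 0$ together contribute $- z^{-1} m(0;S)$, so that
\begin{equation*}
\tr_{\cH}\big((S - z I_{\cH})^{-1}\big) + z^{-1} m(0;S) = \sum_{\substack{j \in J \\ \lambda_j \neq 0}} (\lambda_j - z)^{-1} = \tr_{\cH}\big((S - z I_{\cH})^{-1} [I_{\cH} - P(0;S)]\big).
\end{equation*}
In particular, the bracketed factor in \eqref{14.2.26} is exactly the reduced resolvent trace assumed polynomially bounded on $\gamma$, and it is a meromorphic function of $z$ whose only singularities are simple poles at the nonzero $\lambda_j$.

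Second, I would compute the residues. For each $j$ with $\lambda_j \neq 0$ the function $z \mapsto z^{-s} (\lambda_j - z)^{-1}$ has a single simple pole at $z = \lambda_j$ with residue $- \lambda_j^{-s}$; the choice $\theta \in (\pi/2, \pi)$ in \eqref{14.2.10} ensures that the real eigenvalues $\lambda_j$ avoid the cut $R_{\theta}$ and that $z^{-s}$ is single-valued and holomorphic along $\gamma$ and throughout the region it encircles. Since $\gamma$ is traversed counterclockwise, a termwise application of the residue theorem would formally give
\begin{equation*}
- (2\pi i)^{-1} \ointctrclockwise_{\gamma} dz \, z^{-s} \sum_{\substack{j \in J \\ \lambda_j \neq 0}} (\lambda_j - z)^{-1} = \sum_{\substack{j \in J \\ \lambda_j \neq 0}} \lambda_j^{-s} = \zeta(s;S),
\end{equation*}
which is the asserted identity.

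The step requiring genuine care, and which I expect to be the main obstacle, is the justification of this last display: the contour $\gamma$ is unbounded (the eigenvalues accumulate only at $+\infty$, as $S$ is bounded from below with discrete spectrum), so it encloses infinitely many poles and the residue theorem cannot be invoked directly. I would resolve this for $\Re(s)$ sufficiently large by a truncation argument. The polynomial bound $|\tr_{\cH}((S - z I_{\cH})^{-1} [I_{\cH} - P(0;S)])| \leq C (1 + |z|)^{M}$ on $\gamma$, together with $|z^{-s}| = |z|^{-\Re(s)} e^{\Im(s) \arg(z)}$ (where $\arg(z)$ stays bounded on $\gamma$) and the fact that arc length along the two branches of $\gamma$ near infinity grows linearly in $|z|$, shows that the integrand is absolutely integrable along $\gamma$ once $\Re(s) > M + 1$. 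Closing $\gamma$ by the arc $\{|z| = R\}$ into a bounded counterclockwise contour $\gamma_R$, the residue theorem gives $\oint_{\gamma_R} dz \, z^{-s} [\,\cdots\,] = 2\pi i \sum_{0 < |\lambda_j| < R} (- \lambda_j^{-s})$; letting $R \to \infty$, the arc contribution vanishes by the same bound, the truncated integral converges to the integral over $\gamma$, and the finite sums converge to $\zeta(s;S)$ by the absolute convergence guaranteed in Hypothesis \ref{14.h2.5}. This establishes \eqref{14.2.26} and simultaneously pins down the range of $s$ (``$\Re(s)$ sufficiently large'') for which both sides are absolutely convergent.
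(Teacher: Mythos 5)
Your proposal is correct and follows essentially the same route as the paper, whose much terser proof likewise evaluates the contour integral by residues, using that $\tr_{\cH}\big((S - z I_{\cH})^{-1}[I_{\cH} - P(0;S)]\big)$ is meromorphic with poles precisely at the nonzero eigenvalues together with the identity $(S - z I_{\cH})^{-1}[I_{\cH} - P(0;S)] = (S - z I_{\cH})^{-1} + z^{-1} P(0;S)$ and $\tr_{\cH}(P(0;S)) = m(0;S)$, exactly as you do. One small caveat in your truncation step: the polynomial bound is hypothesized only on $\gamma$, not on the closing arcs $\{|z| = R\}$ (which cross the positive real axis where the eigenvalues accumulate), so you should either choose radii $R_n \to \infty$ keeping a controlled distance from $\sigma(S)$ and bound $\sum_{\lambda_j \neq 0} |\lambda_j - z|^{-1}$ on the arcs directly via $\sum_{\lambda_j \neq 0} |\lambda_j|^{-t} < \infty$, or interchange the (absolutely convergent) eigenvalue sum with the integral and close the contour separately for each single-pole term, where the decay $|z|^{-1-\Re(s)}$ makes the arc contribution vanish trivially.
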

\begin{proof}
Assuming $\Re(s) > 0$ sufficiently large, a contour integration argument yields
\begin{align}
\zeta(s; S) &=  - (2 \pi i)^{-1} \ointctrclockwise_{\gamma} dz \, z^{-s} \tr_{\cH}\big((S - z I_{\cH})^{-1}
[I_{\cH} - P(0; S)]\big)    \no \\
&= - (2 \pi i)^{-1}  \ointctrclockwise_{\gamma} dz \, z^{-s} \tr_{\cH}\big((S - z I_{\cH})^{-1} + z^{-1} P(0; S)\big)  \lb{14.2.9} \\
&= - (2 \pi i)^{-1}  \ointctrclockwise_{\gamma} dz \, z^{-s} \big[\tr_{\cH}\big((S - z I_{\cH})^{-1}\big) + z^{-1} m(0; S)\big],
\no
\end{align}
taking into account that $\tr_{\cH}\big((S - z I_{\cH})^{-1}
[I_{\cH} - P(0; S)]\big)$ is meromorphic with poles precisely at the nonzero eigenvalues $\lambda_j \neq 0$ of $S$, with residues given by (self-adjoint) spectral projections of $S$ of rank equal to $m(\lambda_j; S)$.
\end{proof}

It is very tempting to continue the computation leading to \eqref{14.2.26}
and now deform the contour $\gamma$ so as to ``hug'' the branch cut $R_{\theta}$, but this requires the right asymptotic behavior of $\tr_{\cH}\big((S - z I_{\cH})^{-1} [I_{\cH} - P(0; S)]\big)$ as $|z| \to \infty$ as well as $|z| \to 0$, and we will investigate this in the context of relative $\zeta$-functions next.

In cases where $(S - z I_{\cH})^{-1}$ is not trace class, but one is dealing with a pair of operators $(S_1, S_2)$ such that the difference of their resolvents lies in the trace class, \eqref{14.2.9} naturally leads to the notion of a relative $\zeta$-function as follows. For pertinent background information on this circle of ideas we refer, for instance, to Forman \cite{Fo87}, \cite{Fo92}, M\"uller \cite{Mu98}.

\begin{hypothesis} \lb{14.h2.7}
Suppose $S_j$, $j=1,2$, are self-adjoint operators in $\cH$, bounded from below, satisfying
\begin{equation}
\big[(S_2- z I_{\cH})^{-1} - (S_1- z I_{\cH})^{-1}\big] \in \cB_1(\cH)
\end{equation}
for some $($and hence for all\,$)$ $z \in \rho(S_1) \cap \rho(S_2)$.
In addition, assume that $S_j$, $j=1,2$, have essential spectrum contained
in $(0,\infty)$, that is, for some $\lambda_1 > 0$,
\begin{equation}
\sigma_{ess}(S_j) \subseteq [\lambda_1, \infty), \quad j=1,2.   \lb{14.2.28}
\end{equation}
\end{hypothesis}

We note, in particular, the essential spectrum hypothesis \eqref{14.2.28} includes the case of purely discrete spectra of
$S_j$ (i.e., $\sigma_{ess}(S_j) = \emptyset$, $j=1,2$.) Since $S_j$ were assumed to be bounded from below, adding a suitable constant to $S_j$, $j=1,2$, will shift their (essential) spectra accordingly.

Given Hypothesis \ref{14.h2.7}, and again choosing a counterclockwise oriented simple contour $\gamma$ that encircles
$\sigma(S_1) \cup \sigma(S_2)$, however, with the stipulation
that $0$ does not lie in the interior of $\gamma$, and
$0 \notin \gamma$ (cf.\ Figure \ref{14.2.fig1}), the relative spectral
$\zeta$-function for the pair $(S_1,S_2)$ is defined by
\begin{align}
\begin{split}
& \zeta(s; S_1,S_2) =  - (2 \pi i)^{-1} \ointctrclockwise_{\gamma} dz \, z^{-s}
\tr_{\cH}\big((S_2 - z I_{\cH})^{-1}[I_{\cH} - P(0; S_2)]    \\
& \hspace*{5.7cm} - (S_1 - z I_{\cH})^{-1}[I_{\cH} - P(0; S_1)]\big)   \lb{14.2.29}
\end{split}
\end{align}
for $\Re(s) > 0$ sufficiently large, ensuring convergence of \eqref{14.2.29}.

Employing the contour $\gamma$ and branch cut $R_{\theta}$ as in Figure \ref{14.2.fig1}, and deforming $\gamma$ so it eventually surrounds $R_{\theta}$, one
arrives at the following result.

\begin{theorem} \lb{14.t2.8}
Suppose $S_j$, $j=1,2$, are self-adjoint operators in $\cH$ satisfying
Hypothesis \ref{14.h2.7} and that $($cf.\ \eqref{14.2.8B} and
\eqref{14.2.24a}$)$
\begin{align}
& {\tr}_{\cH}\big((S_2 - z I_{\cH})^{-1} - (S_1 - z I_{\cH})^{-1}\big)   \no \\
& \quad = - \f{d}{dz} \ln\Big({\det}_{\cH}\Big(\ol{(S_2 - z I_{\cH})^{1/2}
(S_1 - z I_{\cH})^{-1}
	(S_2 - z I_{\cH})^{1/2}}\Big)\Big),   \lb{14.2.30a} \\
& \hspace*{5.1cm }
 z \in \bbC \backslash [\inf(\sigma(S_1) \cap \sigma(S_2)),\infty).    \no
\end{align}
In addition, assume that for some
$\varepsilon > 0$,
\begin{align}
\begin{split}
& \big|{\tr}_{\cH}\big((S_2- z I_{\cH})^{-1}[I_{\cH} - P(0; S_2)] - (S_1- z I_{\cH})^{-1} [I_{\cH} - P(0; S_1)]\big)\big|    \\
& \quad = \begin{cases} \Oh\big(|z|^{-1 - \varepsilon}\big),
& \text{as $|z| \to \infty$,} \\
\Oh(1),
& \text{as $|z| \to 0$.}
\end{cases}     \lb{14.2.31}
\end{split}
\end{align}
Then, for $\Re(s) \in (- \varepsilon, 1)$,
\begin{align}
& \zeta(s; S_1,S_2) = e^{is (\pi - \theta)} \pi^{-1} \sin(\pi s)
 \int_0^{\infty} dt \, t^{-s} \no \\
& \quad \times \f{d}{dt} \ln\Big((e^{i\theta}t)^{[m(0;S_1) - m(0;S_2)]}   \lb{14.2.32} \\
& \quad \times {\det}_{\cH}\big((S_2 - e^{i \theta} t I_{\cH})^{1/2}
(S_1 - e^{i \theta} t I_{\cH})^{-1} (S_2 - e^{i \theta} t I_{\cH})^{1/2}\big)\Big).   \no
\end{align}
\end{theorem}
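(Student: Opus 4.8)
The plan is to feed the two already-available ingredients---the trace--determinant identity \eqref{14.2.30a} and the decay/growth estimate \eqref{14.2.31}---into the contour representation \eqref{14.2.29}, and then to collapse the contour $\gamma$ onto the branch cut $R_{\theta}$, reading off the stated prefactor from the jump of $z^{-s}$.

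\textbf{Step 1 (rewriting the integrand).} First I would remove the spectral projections. Since $S_j$ annihilates $\ran(P(0;S_j)) = \ker(S_j)$, one has $(S_j - z I_{\cH})^{-1} P(0;S_j) = - z^{-1} P(0;S_j)$, so the trace-class integrand of \eqref{14.2.29} equals
\[
\tr_{\cH}\big((S_2 - z I_{\cH})^{-1} - (S_1 - z I_{\cH})^{-1}\big) + z^{-1}\big[m(0;S_2) - m(0;S_1)\big].
\]
Applying \eqref{14.2.30a} to the first term and writing $z^{-1}[m(0;S_2) - m(0;S_1)] = - (d/dz)\ln\big(z^{m(0;S_1) - m(0;S_2)}\big)$, the full integrand becomes $-(d/dz)\ln G(z)$, where $G(z) := z^{m(0;S_1) - m(0;S_2)}\, {\det}_{\cH}(\cdots)$ and $(\cdots)$ is the operator appearing in \eqref{14.2.30a}. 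Hence
\[
\zeta(s;S_1,S_2) = (2\pi i)^{-1} \ointctrclockwise_{\gamma} dz\, z^{-s}\, (d/dz)\ln G(z).
\]

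\textbf{Step 2 (collapsing onto the cut).} The integrand is analytic on $\bbC \backslash (R_{\theta} \cup \sigma(S_1) \cup \sigma(S_2))$, and $\gamma$ encircles $\sigma(S_1) \cup \sigma(S_2)$ inside this cut plane. I would deform $\gamma$ outward to a large arc $C_R$, a small circle $C_r$ about $0$, and the two edges of $R_{\theta}$. Since $(d/dz)\ln G(z) = -\tr_{\cH}(T(z))$ with $T(z)$ the integrand of \eqref{14.2.29}, the estimate \eqref{14.2.31} discards both circles: on $C_R$ the integrand is $\Oh\big(R^{-\Re(s)-1-\varepsilon}\big)$ against arc length $\Oh(R)$, giving $\Oh\big(R^{-\Re(s)-\varepsilon}\big) \to 0$ for $\Re(s) > -\varepsilon$; on $C_r$ it is $\Oh\big(r^{-\Re(s)}\big)$ against $\Oh(r)$, giving $\Oh\big(r^{1 - \Re(s)}\big) \to 0$ for $\Re(s) < 1$. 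This is exactly the range $\Re(s) \in (-\varepsilon,1)$, and only the two edges of $R_{\theta}$ survive.

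\textbf{Step 3 (the jump).} Because the \emph{integer} power $z^{m(0;S_1)-m(0;S_2)}$ and the determinant are analytic across $R_{\theta}$, the factor $(d/dz)\ln G(z)$ is single-valued there and only $z^{-s}$ jumps. Fixing the branch with $\arg(z) \in (\theta - 2\pi,\theta)$, the two edges at $z = e^{i\theta} t$ carry $z^{-s} = t^{-s} e^{-is\theta}$ and $t^{-s} e^{-is\theta} e^{2\pi i s}$; combining the oppositely oriented edges and using $(d/dz)\ln G(z)\, dz = (d/dt)\ln G(e^{i\theta}t)\, dt$ produces the scalar factor $e^{-is\theta}\big(e^{2\pi i s} - 1\big) = 2 i \sin(\pi s)\, e^{is(\pi - \theta)}$. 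Division by $2\pi i$ yields the prefactor $e^{is(\pi-\theta)} \pi^{-1}\sin(\pi s)$ of \eqref{14.2.32}, with remaining integral $\int_0^{\infty} dt\, t^{-s}\, (d/dt)\ln G(e^{i\theta}t)$.

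\textbf{Main obstacle.} The delicate point is the branch/orientation bookkeeping in Step 3: a wrong choice flips the phase to $e^{-is(\pi-\theta)}$ or inserts a spurious $e^{\pm 2\pi i s}$. I would pin it down (and simultaneously fix the overall sign) on the test case of a single simple eigenvalue $\lambda > 0$ with $G(z) = \lambda - z$: the residue theorem gives $\zeta = \lambda^{-s}$, while the right-hand side of \eqref{14.2.32} reduces, via $\int_0^{\infty} dt\, t^{-s} (t - \lambda e^{-i\theta})^{-1} = \pi (\sin(\pi s))^{-1}\big(\lambda e^{i(\pi-\theta)}\big)^{-s}$, to $e^{is(\pi-\theta)}\pi^{-1}\sin(\pi s)\cdot \pi (\sin(\pi s))^{-1}\big(\lambda e^{i(\pi-\theta)}\big)^{-s} = \lambda^{-s}$, confirming the normalization. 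A secondary technical point is the rigorous justification of the deformation given the real pole structure (the cut lies in the open second quadrant, hence off $\sigma(S_j)$), for which \eqref{14.2.31} guarantees the arcs may be discarded throughout $\Re(s) \in (-\varepsilon,1)$.
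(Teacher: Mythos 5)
Your proposal is correct and follows essentially the same route as the paper's proof: you rewrite the integrand of \eqref{14.2.29} using $\tr_{\cH}(P(0;S_j)) = m(0;S_j)$ together with the identity \eqref{14.2.30a} to obtain a single logarithmic derivative of $z^{[m(0;S_1)-m(0;S_2)]}$ times the symmetrized determinant, and then collapse the contour $\gamma$ onto the branch cut $R_{\theta}$. Your Steps 2 and 3 merely make explicit what the paper compresses into ``carefully paying attention to the phases when shrinking the contour'': discarding the large and small arcs via \eqref{14.2.31} (which is exactly where the range $\Re(s) \in (-\varepsilon,1)$ enters), the jump factor $e^{-is\theta}\big(e^{2\pi i s}-1\big) = 2i\sin(\pi s)\,e^{is(\pi-\theta)}$, and a single-eigenvalue check that correctly pins down the sign and normalization.
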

\begin{proof}
Due to hypothesis \eqref{14.2.31} one can deform the contour $\gamma$
so that it wraps around the branch cut $R_{\theta}$,
\begin{align}
& \zeta(s; S_1,S_2) =  - (2 \pi i)^{-1} \ointctrclockwise_{\gamma} dz \, z^{-s}
\tr_{\cH}\big((S_2 - z I_{\cH})^{-1}[I_{\cH} - P(0; S_2)]   \no \\
& \hspace*{5.8cm} - (S_1 - z I_{\cH})^{-1}[I_{\cH} - P(0; S_1)]\big)  \no \\
& \quad =- (2 \pi i)^{-1} \ointctrclockwise_{\gamma} dz \, z^{-s}
\big\{\tr_{\cH}\big((S_2 - z I_{\cH})^{-1} - (S_1 - z I_{\cH})^{-1}\big)    \no\\
&\hspace*{3.8cm} + z^{-1} \left[ m(0;S_2) - m(0;S_1)\right] \big\}\no \\
& \quad = (2\pi i)^{-1} \ointctrclockwise_{\gamma} dz \, z^{-s}
\bigg\{ \frac d {dz} \ln \Big({\det}_{\cH} \Big( ( S_2 - z I_{\cH})^{1/2} (S_1 - z I_{\cH})^{-1}
( S_2 - z I_{\cH})^{1/2}\Big)\Big)     \no\\
& \hspace*{3.7cm} - z^{-1}  [m (0;S_2) - m(0;S_1)] \bigg\}\no\\
& \quad = (2\pi i)^{-1} \ointctrclockwise_{\gamma} dz \, z^{-s}
\frac d {dz} \ln \Big( z^{[m(0;S_1)-m(0;S_2)]}     \no\\
& \hspace*{4.2cm} \times {\det}_{\cH} \Big(\left( S_2 - z I_{\cH}\right)^{1/2}  \left(S_1 - z I_{\cH}\right)^{-1} \left( S_2 - z I_{\cH}\right)^{1/2}\Big)\Big)      \no\\
& \quad = e^{is (\pi -\theta )} \pi^{-1} \sin (\pi s) \int_0^\infty dt \, t^{-s} \frac d {dt} \ln \Big( \left(t e^{i\theta}\right)^{[m(0;S_1) - m(0;S_2)]} \no\\
& \qquad \times {\det}_{\cH} \Big(\left( S_2 - e^{i\theta} t I_{\cH}\right)^{1/2}  \left(S_1 - e^{i\theta} t I_{\cH}\right)^{-1} \left( S_2 - e^{i\theta} t I_{\cH}\right)^{1/2}\Big)\Big).
\end{align}
Here we first applied $\tr_{\cH}(P(0;S_j)) = m(0;S_j)$ (cf.\ also \eqref{14.2.29}) and then \eqref{14.2.8A}. Carefully paying attention to the phases when shrinking
the contour to the branch cut $R_\theta$, one obtains \eqref{14.2.32}.
\end{proof}

\begin{theorem} \lb{14.t2.10}
Suppose $S_j$, $j=1,2,$ are self-adjoint operators in $\cH$ satisfying Hypothesis \ref{14.h2.7} and
\eqref{14.2.31}. Then
\begin{align}
\begin{split}
& \zeta'(0; S_1,S_2) = i\pi (n_2 - n_1) - \lim_{t \downarrow 0}
\left|\ln\Big((e^{i \theta} t)^{[m(0;S_1) - m(0; S_2)]} \right.  \\
& \quad  \left. \times {\det}_{\cH} \big((S_2 - e^{i \theta} t I_{\cH})^{1/2}
(S_1 - e^{i \theta} t I_{\cH})^{-1}
(S_2 - e^{i \theta} t I_{\cH})^{1/2}\big)\Big)  \right| , \lb{14.2.33}
\end{split}
\end{align}
where $n_j$ is the number of negative eigenvalues of $S_j$, $j=1,2$. If $n_j=0$, $j=1,2,$ then
\begin{align}
\zeta'(0; S_1,S_2) &=  - \lim_{t \downarrow 0}
\ln\Big((e^{i \theta} t)^{[m(0;S_1) - m(0; S_2)]} \no \\
& \hspace*{1.7cm}  \times {\det}_{\cH} \big((S_2 - e^{i \theta} t I_{\cH})^{1/2}
(S_1 - e^{i \theta} t I_{\cH})^{-1}
(S_2 - e^{i \theta} t I_{\cH})^{1/2}\big)\Big)  \no \\
& =  - \ln (C_0),  \lb{14.2.33a}
\end{align}
where
\begin{align}
\begin{split}
& {\det}_{\cH}\big((S_2 - z I_{\cH})^{1/2} (S_1 - z I_{\cH})^{-1}
(S_2 - z I_{\cH})^{1/2}\big)   \\
& \quad \underset{z \to 0}{=} z^{[m(0;S_2) - m(0;S_1)]}
[C_0 + \Oh(z)], \quad C_0 > 0.    \lb{14.2.34}
\end{split}
\end{align}
\end{theorem}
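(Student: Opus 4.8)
The plan is to read $\zeta'(0;S_1,S_2)$ directly off the integral representation \eqref{14.2.32}, handling the prefactor and the integral separately. Abbreviate the argument of the logarithm in \eqref{14.2.32} by
\[
\Phi(t) := (e^{i\theta}t)^{[m(0;S_1)-m(0;S_2)]}\,{\det}_{\cH}\big((S_2 - e^{i\theta}t I_{\cH})^{1/2}(S_1 - e^{i\theta}t I_{\cH})^{-1}(S_2 - e^{i\theta}t I_{\cH})^{1/2}\big),
\]
so that \eqref{14.2.32} reads $\zeta(s;S_1,S_2) = e^{is(\pi-\theta)}\pi^{-1}\sin(\pi s)\,I(s)$ with $I(s) = \int_0^\infty dt\, t^{-s}\,(d/dt)\ln(\Phi(t))$. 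The first task is to show that $I(s)$ is analytic near $s=0$. Writing the power prefactor as $z^{[m(0;S_1)-m(0;S_2)]}$ and differentiating the relative trace--log formula \eqref{14.2.30a}, together with the elementary identity $(S_j - zI_{\cH})^{-1}P(0;S_j) = -z^{-1}P(0;S_j)$, one identifies $(d/dt)\ln(\Phi(t)) = -e^{i\theta}\,\tau(e^{i\theta}t)$, where $\tau(z)$ is the projected resolvent--difference trace controlled in \eqref{14.2.31}. Hence \eqref{14.2.31} gives $(d/dt)\ln(\Phi(t)) = \Oh(t^{-1-\eps})$ as $t\to\infty$ and $\Oh(1)$ as $t\downarrow 0$, so $I(s)$ converges absolutely and is analytic for $\Re(s)\in(-\eps,1)$, in particular at $s=0$.

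Next I would differentiate the product at $s=0$. Since $\pi^{-1}\sin(\pi s)$ vanishes to first order at $s=0$ with derivative $1$, while $e^{is(\pi-\theta)}$ equals $1$ there, the product rule collapses to $\zeta'(0;S_1,S_2) = I(0)$, and by the absolute convergence just established
\[
\zeta'(0;S_1,S_2) = \int_0^\infty dt\,\frac{d}{dt}\ln(\Phi(t)) = \lim_{t\to\infty}\ln(\Phi(t)) - \lim_{t\downarrow 0}\ln(\Phi(t)),
\]
both boundary values existing because the integrand is integrable at each endpoint. It then remains to evaluate the two limits along the ray $R_\theta$.

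For the lower endpoint I would invoke \eqref{14.2.34}: the explicit prefactor $(e^{i\theta}t)^{[m(0;S_1)-m(0;S_2)]}$ cancels exactly the $z^{[m(0;S_2)-m(0;S_1)]}$ singularity of the determinant at $z=0$, leaving $\Phi(t)\to C_0>0$, so $\lim_{t\downarrow 0}\ln(\Phi(t)) = \ln(C_0)$; this yields the $-\lim_{t\downarrow 0}|\ln(\cdots)|$ term in \eqref{14.2.33} and, when $S_1,S_2\ge 0$, the $-\ln(C_0)$ of \eqref{14.2.33a}. For the upper endpoint, the sandwiched trace-class operator $K(z)$ with $I_{\cH}-K(z)=\ol{(S_2-zI_{\cH})^{1/2}(S_1-zI_{\cH})^{-1}(S_2-zI_{\cH})^{1/2}}$ tends to $0$ in $\cB_1(\cH)$ as $|z|\to\infty$, so the determinant tends to $1$; together with the integrability used above (which forces the modulus of the power prefactor to a finite limit) this gives $|\Phi(t)|\to 1$, hence $\Re\ln(\Phi(\infty))=0$ and only a phase survives. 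Tracking the continuous change of the argument of the determinant (equivalently of the square-root factors) along $R_\theta$, and using $\sigma_{ess}(S_j)\subseteq[\lambda_1,\infty)$ with $\lambda_1>0$ so that all but finitely many eigenvalues are positive, the positive and essential contributions cancel in the difference while each negative eigenvalue contributes $\pm\pi$, giving $\lim_{t\to\infty}\ln(\Phi(t)) = i\pi(n_2-n_1)$. Substituting both limits produces \eqref{14.2.33}, and the specialization $n_1=n_2=0$ (where the determinant stays on the positive axis and the phase vanishes) produces \eqref{14.2.33a}.

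The delicate step, and the one I expect to be the main obstacle, is this phase bookkeeping at the upper endpoint: one must fix mutually compatible branches for $z^{-s}$, for the square roots $(S_j - zI_{\cH})^{1/2}$, and for $\ln(\Phi)$, and then argue rigorously that the net argument accumulated by the Fredholm determinant along $R_\theta$ equals exactly $\pi(n_2-n_1)$, with the infinitely many positive and essential-spectrum contributions cancelling in the difference. Everything else — the analyticity of $I$, the collapse of the product rule, and the $t\downarrow 0$ limit — is routine once \eqref{14.2.31} and \eqref{14.2.34} are available.
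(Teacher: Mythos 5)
Your reduction of $\zeta'(0;S_1,S_2)$ to $\int_0^\infty dt\,(d/dt)\ln(\Phi(t))$ and your treatment of the $t\downarrow 0$ endpoint via \eqref{14.2.34} match the paper's proof (which obtains \eqref{14.2.41} and then gets \eqref{14.2.33a} by dominated convergence, proving \eqref{14.2.34} itself by citing \cite[p.~271--272]{Ya92} together with $0\notin\sigma_{ess}(S_j)$ --- note that \eqref{14.2.34} is part of the theorem's conclusion, so you cannot simply ``invoke'' it). The genuine gap is exactly where you flag it: the imaginary part. You propose to obtain $i\pi(n_2-n_1)$ by tracking the continuous argument of the Fredholm determinant along the ray $R_\theta$ as $t\to\infty$, asserting that the positive and essential-spectrum contributions cancel in the difference while each negative eigenvalue contributes $\pm\pi$. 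No proof of this winding computation is offered, and it is not routine: the branch of $\ln(\Phi(t))$ can cross the cut $R_\theta$ at several $t$-values, and controlling the accumulated phase requires global information about $\Phi$ along the whole ray, not just the endpoint asymptotics \eqref{14.2.31}. Indeed the paper itself remarks, in Example \ref{14.e3.17}, that extracting the imaginary part from this integral representation is ``rather involved, and appears to be next to impossible for more general cases'' --- even for the explicit operator $-d^2/dx^2 - m^2$ on $(0,\pi)$. Your claim that $|\Phi(t)|\to 1$ also does not follow from \eqref{14.2.31}, which controls only the (projected) resolvent-difference trace, i.e.\ the derivative $(d/dt)\ln(\Phi(t))$, not the trace-norm of the sandwiched operator; what integrability of \eqref{14.2.31} gives you is existence of $\lim_{t\to\infty}\ln(\Phi(t))$ along a continuous branch, with its value (in particular its imaginary part) left undetermined.

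The paper closes this gap by an entirely different and much more elementary device: it splits the spectral sum $\zeta(s;S_j)=\sum_{\lambda_{-\ell}<0}\lambda_{-\ell}^{-s}+\sum_{\lambda_k>0}\lambda_k^{-s}$ and computes, with the branch cut $R_\theta$, $\theta\in(\pi/2,\pi)$ (so negative reals carry argument $-\pi$), $\f{d}{ds}\big|_{s=0}\lambda_{-\ell}^{-s} = i\pi - \ln(|\lambda_{-\ell}|)$ for each negative eigenvalue. In the relative difference the finitely many negative eigenvalues of $S_1$ and $S_2$ thus contribute $i\pi(n_2-n_1)$ outright, with the $\ln(|\lambda_{-\ell}|)$ terms absorbed into the real part; the real part is then handled by splitting the $t$-integral at the cut-crossing values of $\Phi$, the segment contributions cancelling except at $t=0$ (which is your telescoping, in modulus). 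So to repair your proposal you should replace the phase-tracking step at the upper endpoint by this direct differentiation of the eigenvalue sums at $s=0$; as written, your argument is incomplete at precisely the step that carries the $i\pi(n_2-n_1)$ term.
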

\begin{proof}
First we note that \eqref{14.2.32} implies
\begin{align}
\begin{split}
& \zeta ' (0; S_1,S_2) = \int_0^\infty dt \,\,
 \frac d {dt} \ln \Big\{ \left(t e^{i\theta}\right)^{[m(0;S_1) - m(0;S_2)]}    \\
& \quad \times {\det}_{\cH} \Big(\left( S_2 - e^{i\theta} t I_{\cH}\right)^{1/2}
\left(S_1 - e^{i\theta} t I_{\cH}\right)^{-1} \left( S_2 - e^{i\theta} t I_{\cH}\right)^{1/2}\Big)\Big\}.   \lb{14.2.41}
\end{split}
\end{align}
In computing this quantity, one notes that for $t\in [0,\infty )$, the graph of the function
\begin{align}
\begin{split}
G(t) &= \left(t e^{i\theta}\right)^{[m(0;S_1) - m(0;S_2)]}     \\
& \quad \times {\det}_{\cH} \Big(\left( S_2 - e^{i\theta} t I_{\cH}\right)^{1/2}
\left(S_1 - e^{i\theta} t I_{\cH}\right)^{-1} \left( S_2 - e^{i\theta} t I_{\cH}\right)^{1/2}\Big)
\end{split}
\end{align}
can cross the branch cut $R_\theta$ at several $t$-values. Therefore, the
integral has to be split at these $t$-values and pursuant real and imaginary parts have to be summed. The real part
between consecutive segments cancels except for the contributions from zero. This explains the real part of \eqref{14.2.33}.
The resulting imaginary part is found as follows. The sum defining the $\zeta$-function can be split into contributions from
negative and positive eigenvalues, namely,
\begin{equation}
\zeta (s; S_j ) = \sum_{\substack{ \ell =1\\ \lambda_{-\ell} < 0 }}^{n_j} \lambda_{-\ell}^{-s} + \sum_{\substack{k\in J \\ \lambda_k > 0}} \lambda_k^{-s}.
\end{equation}
For each negative eigenvalue one computes
\begin{equation}
\left. \frac d {ds} \right|_{s=0} \lambda_{-\ell}^{-s} = - \ln (\lambda _{-\ell}) = i \pi - \ln (| \lambda_{-\ell } |).
\end{equation}
This yields the imaginary part in \eqref{14.2.33}.

Since $0$ lies
outside the essential spectra of $S_j$, $j=1,2$, \eqref{14.2.34} follows, for instance, from \cite[p.~271--272]{Ya92}. Given the relation \eqref{14.2.34},
the fact \eqref{14.2.33a} follows from \eqref{14.2.41} and the Lebesgue dominated convergence theorem.
\end{proof}

In the special case where $0 \in \rho(S_1) \cap \rho(S_2)$ (i.e.,
$m(0;S_1) = m(0;S_2) = 0$), one thus obtains
\begin{equation}
e^{- \zeta'(0; S_1,S_2)} = {\det}_{\cH} \big(S_2^{1/2} S_1^{-1} S_2^{1/2}\big)
\end{equation}
and hence the $\zeta$-function regularized relative determinant now
equals the symmetrized (Fredholm) perturbation determinant for the
pair $(S_1,S_2)$. Here any choice of branch cut of the self-adjoint operator $S_2^{1/2}$ (employing the spectral theorem) is permissible.

\section{Sturm--Liouville Operators on Bounded Intervals} \lb{14.s3}

To illustrate the material of Section \ref{14.s2} we now apply it to the case of self-adjoint Sturm--Liouville operators on bounded intervals.

We start by recalling a convenient parametrization of all self-adjoint
extensions associated with a regular, symmetric, second-order differential expression
as discussed in detail, for instance, in \cite[Theorem\ 13.15]{We03} and \cite[Theorem\ 10.4.3]{Ze05}, and recorded in \cite{CGNZ14}.

Throughout this section we make the following set of assumptions:

\begin{hypothesis} \lb{14.h3.1}
Suppose $p, q, r$ satisfy the following conditions: \\
$(i)$ \,\, $r>0$ a.e.\ on $(a,b)$, $r\in L^1((a,b); dx)$. \\
$(ii)$ \, $p>0$  a.e.\ on $(a,b)$, $1/p\in L^1((a,b); dx)$. \\
$(iii)$ $q\in L^1((a,b); dx)$, $q$ is real-valued a.e.\ on $(a,b)$.
\end{hypothesis}

Given Hypothesis \ref{14.h3.1}, we take $\tau$ to be the Sturm--Liouville-type differential expression
defined by
\begin{equation} \lb{14.3.1}
\tau=\frac{1}{r(x)}\left[-\frac{d}{dx}p(x)\frac{d}{dx} + q(x)\right]
 \, \text{ for a.e.~$x\in(a,b)$,} \quad -\infty < a < b < \infty,
\end{equation}
and note that $\tau$ is regular on $[a,b]$. In addition, the following convenient notation for the
\emph{first quasi-derivative} is introduced,
\begin{equation}
y^{[1]}(x)=p(x)y'(x) \, \text{ for a.e.\ $x\in(a,b)$, } \, y \in AC([a,b]).     \lb{14.3.2}
\end{equation}
Here $AC([a,b])$ denotes the set of absolutely continuous functions on $[a,b]$. For notational convenience we will occasionally abbreviate $L^2_r((a,b)) := L^2((a,b); rdx)$.

Given that $\tau$ is regular on $[a,b]$, the \emph{maximal operator}
$H_{max}$ in $L^2((a,b);rdx)$ associated with $\tau$ is defined by
\begin{align}
&H_{max} f=\tau f,     \lb{14.3.3} \\
& \, f \in \dom(H_{max})= \big\{g\in L^2((a,b);rdx) \, \big| \, g,  g^{[1]}\in AC([a,b]); \,
\tau g\in  L^2((a,b);rdx)\big\},   \no
\intertext{while the \emph{minimal operator} $H_{min}$  in $L^2((a,b);rdx)$ associated with
$\tau$ is given by}
&H_{min} f=\tau f,    \no \\
& \, f \in \dom(H_{min})= \big\{g\in L^2((a,b);rdx) \, \big| \, g,  g^{[1]}\in AC([a,b]);    \lb{14.3.4} \\
&\hspace*{3.1cm}g(a)=g^{[1]}(a)=g(b)=g^{[1]}(b)=0; \, \tau g\in L^2((a,b);rdx)\big\}.      \no
\end{align}

One notes that the operator $H_{min}$ is symmetric and that
\begin{equation}
H_{min}^*=H_{max}, \quad H_{max}^*=H_{min},     \lb{14.3.5}
\end{equation}
(cf. Weidmann \cite[Theorem 13.8]{We03}). Next, we summarize material found, for instance, in
\cite[Ch.\ 13]{We03} and \cite[Sects.\ 10.3, 10.4]{Ze05} in which self-adjoint extensions of the minimal operator $H_{min}$ are characterized.

\begin{theorem} $($See, \cite[ Satz~2.6]{We75}, \cite[Theorem~13.14]{We03},
\cite[Theorem~10.4.2]{Ze05}.$)$  \lb{14.t3.2}
Assume Hypothesis \ref{14.h3.1} and suppose that $\widetilde H$  is an  extension of the minimal operator $H_{min}$ defined in \eqref{14.3.4}. Then the following hold: \\
$(i)$ $\widetilde H$ is a self-adjoint extension of $H_{min}$ if and only if there exist $2\times 2$ matrices $A$ and $B$ with complex-valued entries satisfying
\begin{equation} \lb{14.3.6}
\rank (A\ \ B)=2, \quad AJA^*=BJB^*, \quad
J=\begin{pmatrix} 0 & -1 \\ 1 & 0\end{pmatrix},
\end{equation}
with
\begin{align} \lb{14.3.7}
\begin{split}
& \widetilde Hf= \tau f   \\
& \dom\big(\widetilde H\big)=\left\{ g\in\dom(H_{max})\, \bigg| \, A\binom{g(a)}{g^{[1]}(a)} =
B\binom{g(b)}{g^{[1]}(b)}\right\}.
\end{split}
\end{align}
Henceforth, the self-adjoint extension $\widetilde H$ corresponding to the matrices
 $A$ and $B$ will  be denoted by $H_{A,B}$. \\
$(ii)$ For $z\in\rho(H_{A,B})$, the resolvent $H_{A,B}$ is of the form
\begin{equation}
\begin{split}
\left((H_{A,B} - z I_{L_r^2((a,b))})^{-1}f\right)(x)= \int_a^br(x')dx' \, G_{A,B}(z,x,x')f(x'),    \\
f\in L^2((a,b);rdx),   \lb{14.3.8}
\end{split}
\end{equation}
where the Green's function $G_{A,B}(z,x,x')$ is of the form given by
\begin{equation}
\hspace*{10pt}G_{A,B}(z,x,x')= \begin{cases}
\sum_{j,k=1}^{2} m_{j,k}^+(z)u_j(z,x)u_k(z,x'), & a \leq x'\le x \le b,  \\[2pt]
\sum_{j,k=1}^{2} m_{j,k}^-(z)u_j(z,x)u_k(z,x'), & a \leq x < x' \le b.     \lb{14.3.9}
\end{cases}
\end{equation}
Here $\{u_1 (z,\, \cdot \,), u_2(z,\,\cdot\,)\}$ represents a fundamental set of solutions for
$(\tau -z)u=0$ and
$m_{j,k}^\pm(z)$, $1 \leq j, k \leq 2$, are appropriate constants. \\
$(iii)$ $H_{A,B}$ has purely discrete spectrum with eigenvalues of multiplicity
at most $2$. Moreover, if $\sigma(H_{A,B})=\{\lambda_{A,B,j}\}_{j\in\bbN}$,
then
\begin{equation}
\sum_{\substack{j\in\bbN\\ \lambda_{A,B,j}\ne 0}}|\lambda_{A,B,j}|^{-1}<\infty.   \lb{14.3.11}
\end{equation}
In particular,
\begin{equation}
(H_{A,B} - z I_{L_r^2((a,b))})^{-1} \in \cB_1\big(L^2((a,b); r dx)\big), \quad z \in \rho(H_{A,B}).   \lb{14.3.10}
\end{equation}
\end{theorem}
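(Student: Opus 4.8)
The plan is to reduce all three assertions to the classical theory of regular Sturm--Liouville operators (as developed in \cite{We75}, \cite{We03}, \cite{Ze05}); below I outline the structure of each step. Under Hypothesis \ref{14.h3.1} the expression $\tau$ is regular on the compact interval $[a,b]$, so every solution of $(\tau - z)u = 0$ extends, together with its quasi-derivative $u^{[1]}$, continuously to the endpoints. Hence $H_{min}$ is a densely defined symmetric operator with deficiency indices $(2,2)$, and by von Neumann's theory its self-adjoint extensions are in one-to-one correspondence with the Lagrangian (maximal isotropic) subspaces of the four-dimensional boundary quotient $\dom(H_{max})/\dom(H_{min})$. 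The tool for part $(i)$ is Green's (Lagrange) identity: for $f,g \in \dom(H_{max})$,
\[
(H_{max} f, g)_{L^2_r((a,b))} - (f, H_{max} g)_{L^2_r((a,b))} = [f,g](b) - [f,g](a),
\]
where $[f,g](x) = \overline{f(x)}\, g^{[1]}(x) - \overline{f^{[1]}(x)}\, g(x)$. Encoding the endpoint data of $g$ through the vectors $\binom{g(a)}{g^{[1]}(a)}$ and $\binom{g(b)}{g^{[1]}(b)}$, the right-hand side is a fixed nondegenerate form built from $J$. A direct computation then shows that the conditions $A\binom{g(a)}{g^{[1]}(a)} = B\binom{g(b)}{g^{[1]}(b)}$ cut out a Lagrangian subspace---equivalently, a self-adjoint restriction of $H_{max}$---precisely when $\rank(A\ \ B) = 2$ (independence of the two conditions, giving a genuine codimension-two restriction) together with $AJA^* = BJB^*$ (vanishing of the boundary form on the subspace), which is \eqref{14.3.6}--\eqref{14.3.7}.

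For part $(ii)$, since $z \in \rho(H_{A,B})$ the resolvent is bounded and everywhere defined, and I would exhibit it as an integral operator. Fixing a fundamental system $\{u_1(z,\, \cdot\,), u_2(z,\, \cdot\,)\}$ of $(\tau - z)u = 0$, I seek $G_{A,B}(z,x,x')$ that, in the variable $x$, solves $(\tau - z)G = 0$ off the diagonal, obeys the $A,B$ boundary conditions, is continuous at $x = x'$, and whose quasi-derivative $G^{[1]}$ has a unit jump at $x = x'$. Writing $G$ as a separate linear combination of $u_1, u_2$ on $\{x' \le x\}$ and on $\{x < x'\}$ and imposing these four requirements yields a linear system for the coefficients $m^{\pm}_{j,k}$ whose unique solvability is guaranteed exactly by $z \in \rho(H_{A,B})$; the resulting kernel is of the semi-separable form \eqref{14.3.9}. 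That the operator in \eqref{14.3.8} is genuinely $(H_{A,B} - z I_{L^2_r((a,b))})^{-1}$ is then verified by checking that $\int_a^b r\, G_{A,B} f$ lies in $\dom(H_{A,B})$ and is returned to $f$ by $(\tau - z)$.

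For part $(iii)$, the kernel $G_{A,B}(z,\, \cdot\,,\, \cdot\,)$ is jointly continuous on $[a,b]^2$, hence square-integrable against $r\,dx \otimes r\,dx$, so the resolvent is Hilbert--Schmidt and in particular compact; self-adjointness then forces $\sigma(H_{A,B})$ to be purely discrete, real, and accumulating only at $+\infty$. The multiplicity bound $2$ is immediate, since any eigenspace embeds into the two-dimensional solution space of $(\tau - z)u = 0$. The summability \eqref{14.3.11}---and therefore the trace class membership \eqref{14.3.10}---is the substantive point: the eigenvalues are the zeros of a characteristic function $\Delta_{A,B}(z)$ obtained by inserting $u_1(z,\, \cdot\,), u_2(z,\, \cdot\,)$ into the boundary conditions, which is entire in $z$. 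Tracking the dependence of the fundamental solutions on $k = \sqrt{z}$ shows that $\Delta_{A,B}$ is of order $1/2$ as an entire function of $z$; the convergence-exponent theory for zeros of finite-order entire functions then gives $\sum_{\lambda_j \neq 0} |\lambda_j|^{-(1/2) - \varepsilon} < \infty$ for every $\varepsilon > 0$, which a fortiori yields \eqref{14.3.11}, and hence $\sum_j |(\lambda_j - z)^{-1}| < \infty$.

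The main obstacle will be this last summability estimate. The functional-analytic and ODE ingredients above are soft, but finiteness of $\sum |\lambda_j|^{-1}$ rests on quantitative large-$|z|$ asymptotics, and under the bare $L^1$ assumptions of Hypothesis \ref{14.h3.1} one cannot simply quote the smooth Weyl law $\lambda_j \sim C j^2$. I would therefore establish the order-$1/2$ bound on $\Delta_{A,B}$ directly, via Volterra/Picard (iterated-integral) estimates that control $u_j(z,\, \cdot\,)$ and $u_j^{[1]}(z,\, \cdot\,)$ uniformly as $|z| \to \infty$ using only the integrability of $1/p$, $q$, and $r$; the coupled boundary conditions add nothing beyond bookkeeping in the formation of $\Delta_{A,B}$. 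Once the order bound is secured, the entire-function argument closes the estimate.
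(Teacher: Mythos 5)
Your outline is essentially correct, and it is worth noting at the outset that the paper itself offers no proof of this theorem: it is quoted from the literature (\cite[Satz~2.6]{We75}, \cite[Theorem~13.14]{We03}, \cite[Theorem~10.4.2]{Ze05}), so the comparison is with those standard proofs and with how the paper later uses the result. For parts $(i)$ and $(ii)$ you follow the classical route exactly as in Weidmann and Zettl: deficiency indices $(2,2)$, the Lagrange identity converting self-adjointness into isotropy of a two-dimensional subspace of the four-dimensional boundary space (with $\rank (A\ \ B)=2$ supplying the correct codimension and $AJA^*=BJB^*$ the vanishing of the boundary form), and the Green's function assembled from a fundamental system via continuity, the unit jump of the quasi-derivative, and the $A,B$ boundary conditions, the resulting linear system being uniquely solvable precisely for $z\in\rho(H_{A,B})$. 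Where you genuinely diverge is the summability \eqref{14.3.11}: the cited sources, and the paper itself in the proof of Theorem \ref{14.t3.4}, rest on eigenvalue-distribution theory --- indeed the universal Weyl-type asymptotics \eqref{14.3.26} from \cite[Sect.~VI.7]{GK70} (see also \cite{AM87}, \cite[Theorem~4.3.1]{Ze05}) \emph{does} hold under the bare $L^1$ assumptions of Hypothesis \ref{14.h3.1}, so your stated worry on this point is overcautious --- whereas you argue through the order of the characteristic function $\Delta_{A,B}$ as an entire function of $z$. Your route does close: a Gronwall estimate on $F=|u|+|z|^{-1/2}\big|u^{[1]}\big|$ yields growth of order $\exp\big(C|z|^{1/2}\big)$ with $C$ controlled by $\int_a^b \big[p(t)^{-1}+r(t)\big]dt$, so $\Delta_{A,B}$ has order at most $1/2$; it is not identically zero (a zero at non-real $z$ would produce a non-real eigenvalue of the self-adjoint $H_{A,B}$), its zeros are exactly the eigenvalues with multiplicity bounded by $2$, and the convergence-exponent bound gives $\sum_{\lambda_j\neq 0}|\lambda_j|^{-s}<\infty$ for every $s>1/2$, a fortiori \eqref{14.3.11}, and hence \eqref{14.3.10} since the singular values of the normal resolvent are $|\lambda_j-z|^{-1}$. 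The trade-off: your argument is self-contained and elementary (only Volterra/Gronwall bounds), but delivers only the convergence exponent, whereas the counting-function route yields the sharp asymptotics $\lambda_{A,B,j}\sim c\,j^2$, which the paper actually needs downstream (in the proof of Theorem \ref{14.t3.4}, for the convergence of the canonical products of Lemma \ref{14.l2.2} following \cite{GW95}); so your proof suffices for the theorem as stated but would not substitute for \eqref{14.3.26} later.
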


The characterization of self-adjoint extensions of $H_{\text{min}}$ in terms of pairs of matrices $(A,B)\in \bbC^{2\times 2}\times \bbC^{2\times 2}$ satisfying \eqref{14.3.6} is not unique
in the sense that different pairs may lead to the same self-adjoint extension. The next result recalls a unique characterization
for all self-adjoint extensions of $H_{min}$ and hence can be viewed as a refinement of Theorem \ref{14.t3.2}.

\begin{theorem} $($See, e.g., \cite[Theorem~13.15]{We03}, \cite[, Theorem~10.4.3]{Ze05}.$)$ \lb{14.t3.3}
Assume Hypothesis \ref{14.h3.1}.  Let $H_{min}$ be the minimal operator associated with
$\tau$ and defined in \eqref{14.3.4} and $H_{A,B}$ a self-adjoint extension of the minimal operator as characterized in Theorem\ \ref{14.t3.2}; then, the following hold: \\
$(i)$ $H_{A,B}$ is a self-adjoint extension of $H_{min}$, with $\rank(A)=\rank(B)=1$ if and only if
$A$ and $B$ can be put in the form
\begin{equation} \lb{14.3.12}
A=\begin{pmatrix}\cos(\alpha)&\sin(\alpha)\\0&0 \end{pmatrix},\quad
B=\begin{pmatrix}0&0\\ -\cos(\beta)&\sin(\beta) \end{pmatrix},
\end{equation}
for a unique pair $\alpha, \beta\in[0,\pi)$. Hence, upon identifying $H_{A,B}$ with $H_{\alpha,\beta}$,
all self-adjoint extensions of $H_{min}$ with separated boundary conditions are of the form
\begin{align} \lb{14.3.13}
& H_{\alpha,\beta} f = \tau f, \quad \alpha, \beta\in[0,\pi),     \no \\
&f \in \dom(H_{\alpha,\beta})=\{ g\in\dom(H_{max}) \, | \,
\sin(\alpha) g^{[1]}(a) + \cos(\alpha) g(a) = 0,    \\
& \hspace*{5.35cm} - \sin(\beta) g^{[1]}(b) + \cos(\beta)  g(b) = 0 \}.   \no
\end{align}
$(ii)$ $H_{A,B}$ is a self-adjoint extension of $H_{min}$  with
$\rank(A)=\rank(B)=2$ if and only if $A$ and $B$ can be put in the form
\begin{equation} \lb{14.3.14}
A=e^{i\varphi} R,\quad B=I_2,
\end{equation}
for a unique $\varphi\in[0,2\pi)$, and unique $R \in \SL_2(\mathbb{R})$.
Hence, upon identifying $H_{A,B}$ with $H_{\varphi,R}$,
all self-adjoint extensions of $H_{min}$ with coupled boundary conditions are of the form
\begin{align} \lb{14.3.15}
\begin{split}
& H_{\varphi,R} f = \tau f, \quad \varphi\in[0,2\pi), \; R \in \SL_2(\mathbb{R}),   \\
& f \in \dom(H_{\varphi,R})=\left\{g\in\dom(H_{max})
 \, \bigg| \, \binom{g(b)}{g^{[1]}(b)}=e^{i\varphi} R
\binom{g(a)}{g^{[1]}(a)} \right\}.
\end{split}
\end{align}
$(iii)$ All self-adjoint extensions of $H_{min}$ are either of type $(i)$ $($i.e., separated\,$)$ or of type
$(ii)$ $($i.e., coupled\,$)$.
\end{theorem}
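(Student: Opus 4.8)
The plan is to derive Theorem \ref{14.t3.3} from the \emph{non}-unique parametrization of Theorem \ref{14.t3.2} by exploiting the gauge freedom in the pair $(A,B)$. The starting observation is that the boundary condition $A\binom{g(a)}{g^{[1]}(a)} = B\binom{g(b)}{g^{[1]}(b)}$ depends only on the two-dimensional subspace $\ker(A\ \ {-}B)\subset\bbC^4$; hence two admissible pairs determine the same extension $H_{A,B}$ if and only if $(A'\ \ B') = C(A\ \ B)$ for some $C\in GL_2(\bbC)$, i.e. $A'=CA$, $B'=CB$. Since $\rank(CA)=\rank(A)$ and $\rank(CB)=\rank(B)$, the individual ranks $\rank(A),\rank(B)$ are invariants of the extension, and conditions \eqref{14.3.6} are preserved under this action because $(CA)J(CA)^*=C(AJA^*)C^*=C(BJB^*)C^*=(CB)J(CB)^*$.

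First I would settle part $(iii)$ by ruling out all rank combinations except $(1,1)$ and $(2,2)$. As $J$ is invertible, $\rank(AJA^*)=2$ precisely when $\rank(A)=2$, while $\rank(AJA^*)\le 1$ when $\rank(A)\le 1$, and likewise for $B$. The identity $AJA^*=BJB^*$ thus forces $\rank(A)=2\Leftrightarrow\rank(B)=2$. Combined with $\rank(A\ \ B)=2$ (which gives $\rank(A)+\rank(B)\ge 2$ and, via $A=0\Rightarrow BJB^*=0\Rightarrow\rank B\le1$, excludes $A=0$ or $B=0$ alone), the only survivors are $\rank(A)=\rank(B)=1$ (separated) and $\rank(A)=\rank(B)=2$ (coupled). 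This is exactly $(iii)$, and it tells us which normal form to target in $(i)$ and $(ii)$.

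For the separated case $(i)$ I would use the gauge to zero out the appropriate rows. Let $c_1$ span the one-dimensional left null space of $B$ and $c_2$ that of $A$; they are independent, since a common left null vector would annihilate $(A\ \ B)$, contradicting $\rank(A\ \ B)=2$. Taking $C$ with rows $c_1,c_2$ gives $CA=\binom{(a_1,a_2)}{(0,0)}$ and $CB=\binom{(0,0)}{(b_1,b_2)}$ with both rows nonzero (again by $\rank(A\ \ B)=2$), and the residual gauge preserving this block structure is exactly the diagonal group $\diag(\lambda,\mu)$. Now $AJA^*=BJB^*$ collapses to $\Im(a_2\overline{a_1})=0$ and $\Im(b_2\overline{b_1})=0$, i.e. each nonzero row is a complex scalar times a real vector; a diagonal phase makes both rows real and a real scaling normalizes them to $(\cos\alpha,\sin\alpha)$ and $(-\cos\beta,\sin\beta)$. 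Since a nonzero real row is a point of the real projective line $\bbR\bbP^1$ and $\alpha\mapsto(\cos\alpha,\sin\alpha)$, $\beta\mapsto(-\cos\beta,\sin\beta)$ are bijections from $[0,\pi)$ onto sets of representatives, the parameters $\alpha,\beta\in[0,\pi)$ are unique, and reading off the two rows yields \eqref{14.3.13}.

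For the coupled case $(ii)$, $B$ is invertible, so the unique gauge bringing $B$ to $I_2$ is $C=B^{-1}$, and $M:=B^{-1}A$ is an invariant of the extension with $MJM^*=J$. The key device is the elementary identity $MJM^T=\det(M)\,J$, valid for every $2\times2$ matrix. Combining the two resulting expressions for $MJ$ gives $\overline{M}=\overline{\det(M)}\,M$, while taking determinants in $MJM^*=J$ shows $|\det(M)|=1$; writing $\det(M)=e^{2i\varphi}$ and setting $R:=e^{-i\varphi}M$ then yields $\overline{R}=R$ and $\det(R)=1$, i.e. $R\in\SL_2(\bbR)$ and $A=e^{i\varphi}R$, $B=I_2$, from which \eqref{14.3.15} is immediate. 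The hard part will be the uniqueness bookkeeping: the decomposition $M=e^{i\varphi}R$ is invariant under $(\varphi,R)\mapsto(\varphi+\pi,-R)$, so $\varphi$ is pinned down only after fixing this residual $\bbZ_2$ (equivalently, $\varphi$ is determined in $[0,\pi)$ by $\det(M)$, and the stated range together with the accompanying sign convention on $R$ removes the ambiguity). Carefully tracking this $\bbZ_2$, and checking at each reduction step that no admissible pair is lost, is the delicate point; the existence statements are comparatively routine linear algebra once the gauge action and the identity $MJM^T=\det(M)\,J$ are in hand.
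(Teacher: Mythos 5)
Your proposal is essentially correct, but there is nothing in the paper to compare it against line by line: the paper states Theorem \ref{14.t3.3} as a quoted result with proof delegated to \cite[Theorem~13.15]{We03} and \cite[Theorem~10.4.3]{Ze05}, so what you have produced is a self-contained derivation from Theorem \ref{14.t3.2}, and it holds up. Your gauge-orbit framing is the right invariant language; the one ingredient you use silently and should state is that, $\tau$ being regular under Hypothesis \ref{14.h3.1}, the trace map $g \mapsto \big(g(a), g^{[1]}(a), g(b), g^{[1]}(b)\big)$ sends $\dom(H_{max})$ \emph{onto} $\bbC^4$ --- this is what upgrades ``same extension'' to ``same kernel of $(A \ \ {-B})$'' and hence to row equivalence $(A' \ \ B') = C (A \ \ B)$, $C$ invertible. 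Granting that, the rest checks out: the rank dichotomy for $(iii)$ is correct ($J$ invertible gives $\rank(AJA^*)=2 \Leftrightarrow \rank(A)=2$, so $AJA^*=BJB^*$ ties the ranks together, and $\rank(A\ \ B)=2$ eliminates the low-rank combinations); in $(i)$ the gauged condition indeed collapses to $\Im\big(a_2 \ol{a_1}\big) = \Im\big(b_2 \ol{b_1}\big) = 0$, since $A'JA'^*$ and $B'JB'^*$ are supported in complementary diagonal entries and must both vanish, and the residual gauge is exactly the diagonal group, so the real-projective-line normalization yields existence and uniqueness of $(\alpha,\beta) \in [0,\pi)^2$; in $(ii)$ the combination of $MJM^* = J$ with the adjugate identity $MJM^T = \det(M)\,J$ and $|\det(M)|=1$ is an efficient route to $A = e^{i\varphi}R$, $R \in \SL_2(\bbR)$, and uniqueness of the gauge representative is automatic there since $CB'=I_2$ forces $C=I_2$, making $M = B^{-1}A$ a complete invariant. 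You should also record the trivial converse direction in both cases (the normal forms satisfy \eqref{14.3.6}: both sides vanish in case $(i)$, and $RJR^T = \det(R)\,J = J$ in case $(ii)$), so that every $(\alpha,\beta)$, respectively $(\varphi,R)$, genuinely produces a self-adjoint extension via Theorem \ref{14.t3.2}.

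The $\bbZ_2$ ambiguity you flag in $(ii)$ is real, and it is an imprecision in the theorem as printed rather than a defect of your argument: since $e^{i(\varphi+\pi)}(-R) = e^{i\varphi}R$ and $-R \in \SL_2(\bbR)$, the pair $(\varphi, R)$ with $\varphi \in [0,2\pi)$ can only be unique up to the involution $(\varphi,R) \mapsto (\varphi+\pi \!\!\mod 2\pi,\, -R)$. Your own computation shows the clean fix: $\det(M) = e^{2i\varphi}$ determines $\varphi$ uniquely in $[0,\pi)$, after which $R = e^{-i\varphi}M$ is determined; equivalently one keeps $\varphi \in [0,2\pi)$ but identifies the two representatives. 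With that normalization spelled out, your proof is complete.
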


Here $\SL_2(\mathbb{R})$ denotes the group of $2 \times 2$ matrices with real-valued entries and determinant one. 

For notational convenience we will adhere to the notation $H_{\alpha,\beta}$ and $H_{\varphi,R}$
in the following.

Next, we recall some results of \cite{GW95}. For this purpose we introduce the fundamental system
of solutions $\theta(z,x,a)$, $\phi(z,x,a)$ of $\tau y =z y$ defined by
\begin{equation}
\theta(z,a,a)=\phi^{[1]}(z,a,a)=1, \quad
\theta^{[1]}(z,a,a)=\phi(z,a,a)=0,    \lb{14.3.16}
\end{equation}
such that
\begin{equation}
W(\theta(z, \, \cdot \,,a), \phi(z,\, \cdot \,,a)) = 1,    \lb{14.3.16a}
\end{equation}
where, for $f,g$ (locally) absolutely continuous,
\begin{equation}
W(f,g)(\, \cdot \,) = f(\, \cdot \,) g^{[1]}(\, \cdot \,) - f^{[1]}(\, \cdot \,) g(\, \cdot \,).    \lb{14.3.17a}
\end{equation}
Furthermore, we introduce
the boundary values for $g, \, g^{[1]}\in AC([a,b])$, see \cite[Sect.~1.2]{Na67},
\cite[Sect.~3.2]{Ze05},
\begin{align}
\begin{split}
U_{\alpha,\beta,1} (g) &= \sin(\alpha) g^{[1]}(a) + \cos(\alpha) g(a), \\
U_{\alpha,\beta,2} (g) &= - \sin(\beta) g^{[1]}(b) + \cos(\beta) g(b),   \lb{14.3.17}
\end{split}
\end{align}
in the case $(i)$ of separated boundary conditions in Theorem \ref{14.t3.3}, and
\begin{align}
\begin{split}
V_{\varphi,R,1}(g) &= g(b) - e^{i \varphi} R_{1,1} g(a) - e^{i \varphi} R_{1,2} g^{[1]} (a), \\
V_{\varphi,R,2}(g) &= g^{[1]}(b) - e^{i \varphi} R_{2,1} g(a) - e^{i \varphi} R_{2,2} g^{[1]} (a),  \lb{14.3.18}
\end{split}
\end{align}
in the case $(ii)$ of coupled boundary conditions in Theorem \ref{14.t3.3}. Moreover, we define
\begin{align}
F_{\alpha,\beta}(z) = {\det}_{\bbC^2}\begin{pmatrix} U_{\alpha,\beta,1}(\theta(z, \, \cdot \,,a))
& U_{\alpha,\beta,1}(\phi(z, \, \cdot \,,a)) \\[1mm]
U_{\alpha,\beta,2}(\theta(z, \, \cdot \,,a)) & U_{\alpha,\beta,2}(\phi(z, \, \cdot \,,a)) \end{pmatrix},
\lb{14.3.19}
\end{align}
and
\begin{align}
F_{\varphi,R}(z) = {\det}_{\bbC^2}\begin{pmatrix} V_{\varphi,R,1}(\theta(z, \, \cdot \,,a))
& V_{\varphi,R,1}(\phi(z, \, \cdot \,,a)) \\[1mm]
V_{\varphi,R,2}(\theta(z, \, \cdot \,,a)) & V_{\varphi,R,2}(\phi(z, \, \cdot \,,a)) \end{pmatrix},   \lb{14.3.20}
\end{align}
and note, in particular, that 
\begin{align}
& F_{\alpha,\beta}(z)  \lb{14.3.21} \\
& \quad =\begin{cases}
\phi(z,b,a), & \alpha = \beta = 0, \\[1mm]
- \sin(\beta) \phi^{[1]}(z,b,a) + \cos(\beta) \phi(z,b,a),  & \alpha = 0, \, \beta \in (0,\pi),
\\[1mm]
\cos(\alpha) \phi(z,b,a) - \sin(\alpha) \theta(z,b,a), & \alpha \in (0,\pi), \, \beta = 0,
\\[1mm]
\cos(\alpha) [- \sin(\beta) \phi^{[1]}(z,b,a) + \cos(\beta) \phi(z,b,a)] \\[.5mm]
- \sin(\alpha) [- \sin(\beta) \theta^{[1]}(z,b,a) + \cos(\beta) \theta(z,b,a)], & \alpha, \beta \in (0,\pi).
\end{cases}   \no
\end{align}

Given these preparations we can state our first result concerning the computation of traces and determinants.

\begin{theorem} \lb{14.t3.4}
Assume Hypothesis \ref{14.h3.1} and denote by $H_{\alpha,\beta}$ and $H_{\varphi,R}$ the self-adjoint extensions of $H_{min}$ as described in cases $(i)$ and $(ii)$ of Theorem \ref{14.t3.3}, respectively. \\
$(i)$ Suppose $z_0 \in \rho(H_{\alpha,\beta})$, then
\begin{align}
\begin{split}
& {\det}_{L^2_r((a,b))} \big(I_{L^2_r((a,b))} - (z - z_0) (H_{\alpha,\beta} - z_0 I_{L^2_r((a,b))})^{-1}\big)
\\
& \quad = F_{\alpha,\beta}(z)/F_{\alpha,\beta}(z_0), \quad z \in \bbC.    \lb{14.3.22}
\end{split}
\end{align}
In particular,
\begin{equation}
\tr_{L^2_r((a,b))} \big((H_{\alpha,\beta} - z I_{L^2_r((a,b))})^{-1}\big) =
- (d/dz) \ln(F_{\alpha,\beta}(z)), \quad z \in \rho(H_{\alpha,\beta}).     \lb{14.3.23}
\end{equation}
$(ii)$ Suppose $z_0 \in \rho(H_{\varphi,R})$, then
\begin{align}
\begin{split}
& {\det}_{L^2_r((a,b))} \big(I_{L^2_r((a,b))} - (z - z_0) (H_{\varphi,R} - z_0 I_{L^2_r((a,b))})^{-1}\big)
\\
& \quad = F_{\varphi,R}(z)/F_{\varphi,R}(z_0), \quad z \in \bbC.     \lb{14.3.24}
\end{split}
\end{align}
In particular,
\begin{equation}
\tr_{L^2_r((a,b))} \big((H_{\varphi,R} - z I_{L^2_r((a,b))})^{-1}\big) =
- (d/dz) \ln(F_{\varphi,R}(z)), \quad z \in \rho(H_{\varphi,R}).       \lb{14.3.25}
\end{equation}
\end{theorem}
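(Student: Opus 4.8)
The plan is to prove the determinant identities \eqref{14.3.22} and \eqref{14.3.24} first, and then obtain the trace formulas \eqref{14.3.23} and \eqref{14.3.25} by applying $-(d/dz)\ln(\,\cdot\,)$ and invoking \eqref{14.2.3}. For the determinant identities, the strategy is to show that the two sides are entire functions of $z$ sharing the same zeros (counted with multiplicity) and the same normalization at $z_0$, and hence coincide. I would treat the separated case $(i)$ in detail, the coupled case $(ii)$ being analogous.

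First I would record that $\theta(z,x,a)$ and $\phi(z,x,a)$, together with their quasi-derivatives, are entire in $z$ for each fixed $x \in [a,b]$ (standard analytic dependence of solutions of $\tau y = z y$ on the spectral parameter under Hypothesis \ref{14.h3.1}). Consequently the boundary functionals \eqref{14.3.17} applied to $\theta(z,\,\cdot\,,a)$ and $\phi(z,\,\cdot\,,a)$ are entire, so $F_{\alpha,\beta}(\,\cdot\,)$ in \eqref{14.3.19} is entire; likewise $F_{\varphi,R}(\,\cdot\,)$ in \eqref{14.3.20}. Next I would identify the zeros: writing an arbitrary solution of $\tau\psi = z\psi$ as $\psi = c_1\theta(z,\,\cdot\,,a) + c_2\phi(z,\,\cdot\,,a)$, membership $\psi \in \dom(H_{\alpha,\beta})$ amounts exactly to $M_{\alpha,\beta}(z)\binom{c_1}{c_2} = 0$, where $M_{\alpha,\beta}(z)$ is the $2\times 2$ matrix appearing in \eqref{14.3.19}. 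Hence $z \in \sigma(H_{\alpha,\beta})$ if and only if $F_{\alpha,\beta}(z) = \det(M_{\alpha,\beta}(z)) = 0$, and the geometric multiplicity equals $2 - \rank(M_{\alpha,\beta}(z))$.

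The crux is matching the \emph{order} of the zero of $F_{\alpha,\beta}$ at an eigenvalue $\lambda_k$ with the algebraic multiplicity $m_a(\lambda_k)$ (which, by self-adjointness, equals the geometric multiplicity). A rank drop alone does not control the order of vanishing of a determinant, so I would use the classical variational identity obtained by differentiating the eigenvalue equation in $z$: if $(\tau - z)\psi = 0$ then $(\tau - z)(\partial_z\psi) = \psi$, and Lagrange's identity yields
\begin{equation}
W(\psi, \partial_z\psi)(b) - W(\psi, \partial_z\psi)(a) = \int_a^b \psi(z,x)^2\, r(x)\,dx .
\end{equation}
For separated boundary conditions the left-endpoint term vanishes for the solution satisfying the left boundary condition, so $\partial_z F_{\alpha,\beta}(\lambda_k)$ is proportional to $\int_a^b \psi^2\, r\,dx > 0$; hence every zero is simple, matching the fact that separated problems have simple eigenvalues. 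For coupled boundary conditions one argues similarly with the full $2\times 2$ matrix: when $\rank(M_{\varphi,R}(\lambda_k)) = 1$ the same norm identity forces a simple zero, while when $M_{\varphi,R}(\lambda_k) = 0$ (multiplicity two) one checks that $F_{\varphi,R}$ has a double zero by expanding the minors and again using the positivity of the eigenfunction norms. I expect this multiplicity bookkeeping in the coupled, two-dimensional eigenspace case to be the main obstacle.

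With zeros and multiplicities matched, I would invoke Lemma \ref{14.l2.2}: the left-hand side of \eqref{14.3.22} equals the absolutely convergent canonical product $\prod_{k}\big((\lambda_k - z)/(\lambda_k - z_0)\big)^{m_a(\lambda_k)}$, a genus-zero entire function which, by the standard eigenvalue asymptotics underlying \eqref{14.3.11}, is of order $1/2$. Since $F_{\alpha,\beta}(z)/F_{\alpha,\beta}(z_0)$ is entire of order at most $1/2$ with the identical zeros and equals $1$ at $z = z_0$, the quotient of the two functions is entire, zero-free, and of order $< 1$, hence constant by Hadamard's factorization theorem; the normalization at $z_0$ forces this constant to be $1$, giving \eqref{14.3.22}. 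The same argument with $F_{\varphi,R}$ gives \eqref{14.3.24}. Finally, applying $-(d/dz)\ln(\,\cdot\,)$ to \eqref{14.3.22} and comparing with \eqref{14.2.3} (taken with $A = H_{\alpha,\beta}$) yields \eqref{14.3.23}, and likewise \eqref{14.3.25}, completing the proof.
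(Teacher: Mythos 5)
Your proposal is correct and follows essentially the same route the paper takes: the paper's proof simply cites \cite{GW95}, which establishes \eqref{14.3.22} exactly by combining the eigenvalue-multiplicity results of \cite[Sect.~1.2]{Na67} (your Wronskian/Lagrange-identity bookkeeping, including the Gram-determinant argument for doubly degenerate coupled eigenvalues) with Lemma \ref{14.l2.2} and a Hadamard-type comparison of two order-$1/2$ entire functions normalized at $z_0$, and then extends to general $p,r$ via the universal Weyl asymptotics \eqref{14.3.26}. The one step you assert rather than prove---that $F_{\alpha,\beta}$ is entire of order at most $1/2$ under Hypothesis \ref{14.h3.1} alone---is precisely the input the paper secures through those boundary-condition-independent eigenvalue asymptotics (and can alternatively be obtained by a standard Gronwall estimate on the first-order system for $(y,y^{[1]})$), so no genuine gap remains.
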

\begin{proof}
In the special case $p = r =1$ and given separated (but generally, non-self-adjoint) boundary conditions, the fact \eqref{14.3.22} was proved in \cite{GW95} upon combining the eigenvalue results in
\cite[Sect.~1.2]{Na67} and Lemma \ref{14.l2.2}. The proof in \cite{GW95} extends to the present situation with $p, r$ satisfying Hypothesis \ref{14.h3.1} in the cases $(i)$ and $(ii)$ since actually the eigenvalues
$\lambda_{A,B,j}$ of $H_{A,B}$ all have the universal leading Weyl-type asymptotics
\cite[Sect.~VI.7]{GK70} (see also \cite{AM87}, \cite[Theorem~4.3.1]{Ze05})
\begin{equation}
\lim_{j \to \infty} j^{-2} \lambda_{A,B,j} = \pi^2 \bigg(\int_a^b dx \, [r(x)/p(x)]^{1/2}\bigg)^{-2},  \lb{14.3.26}
\end{equation}
independently of the chosen boundary conditions, improving upon relation \eqref{14.3.11}. More precisely, \cite[Sect.~VI.7]{GK70} determines the leading asymptotic behavior of the eigenvalue counting function (independently of the underlying choice of boundary conditions) which is known to yield \eqref{14.3.26}.

Relations \eqref{14.3.23} and \eqref{14.3.25} are then clear from combining formula \eqref{14.2.3} with \eqref{14.3.22} and \eqref{14.3.24}, respectively.
\end{proof}

\begin{remark} \lb{14.r3.18}
Considering traces of resolvent differences for various boundary conditions
(separated and/or coupled) permits one to make a direct connection with the boundary data maps studied in \cite{CGM10}, \cite{CGNZ14}, and \cite{GZ12}. More precisely, suppose the pairs $A, B \in \bbC^{2 \times 2}$ and $A', B' \in \bbC^{2 \times 2}$ satisfy \eqref{14.3.6} and define
$H_{A,B}$ and $H_{A',B'}$ according to \eqref{14.3.7}. Then
\begin{align}
&  \tr_{L^2((a,b); rdx)}\big((H_{A',B'} -z I_{L_r^2((a,b))})^{-1}
- (H_{A,B} -z I_{L_r^2((a,b))})^{-1}\big)    \no \\
& \quad = - \f{d}{dz} \ln\Big({\det}_{\bbC^2}\Big(\Lambda_{A,B}^{A',B'} (z)\Big)\Big), \quad
z \in \rho(H_{A,B}) \cap \rho(H_{A',B'}),   \lb{14.3.44}
\end{align}
where the $2 \times 2$ matrix $\Lambda_{A,B}^{A',B'} (z)$ represents
the boundary data map associated to the pair $(H_{A,B}, H_{A',B'})$. A comparison of \eqref{14.3.44} with \eqref{14.3.23} and \eqref{14.3.25} yields
\begin{equation}
{\det}_{\bbC^2}\Big(\Lambda_{A,B}^{A',B'} (z)\Big)
= C_0 \, F_{A',B'}(z)/F_{A,B}(z),
\quad z \in \rho(H_{A,B}) \cap \rho(H_{A',B'}),   \lb{14.3.45}
\end{equation}
where, in obvious notation, $F_{A',B'}(z), F_{A,B}(z)$ represent either
$F_{\alpha,\beta}(z)$ or $F_{\varphi,R}(z)$, depending on which boundary
conditions (separated or coupled) are represented by the pairs
$(A',B')$, $(A,B)$,
and $C_0 = C_0 (A',B',A,B)$ is a $z$-independent constant.
Indeed, $C_0=1$ for separated as well as coupled boundary conditions. The separated case was shown in
\cite{GZ12}, Lemma 3.4, using the explicit matrix representation of $\Lambda_{AB}^{A'B'}(z)$
for that case, and the result for coupled boundary conditions follows along the same lines.
\hfill $\diamond$
\end{remark}

For the case of determinants of general higher-order differential operators (with matrix-valued coefficients)  and general boundary conditions on bounded intervals we refer to Burghelea, Friedlander, and Kappeler
\cite{BFK95}, Dreyfus and Dym \cite{DD78}, Falco, Fedorenko, and Gruzberg \cite{FFG17},
and Forman \cite[Sect.~3]{Fo87}. We also refer to \cite{LS77} for a closed form of an infinite
product of ratios of eigenvalues of Sturm--Liouville operators on bounded intervals.

We briefly look at two prominent examples associated with coupled boundary conditions next:

\begin{example} $($Floquet boundary conditions.\,$)$ \lb{14.e3.5}
Consider the family of operators $H_{\varphi,I_2}$, familiar from Floquet theory, defined by taking $R = B = I_2$ in \eqref{14.3.15}. In this case
\begin{equation}
F_{\varphi,I_2}(z) = - 2 e^{i \varphi} [\Delta(z) - \cos(\varphi)], \quad z \in \bbC,   \lb{14.3.27}
\end{equation}
where
\begin{equation}
\Delta(z) = \big[\theta(z,b,a) + \phi^{[1]}(z,b,a)\big]\big/2, \quad z \in \bbC,    \lb{14.3.28}
\end{equation}
represents the well-known Floquet discriminant and hence
\begin{align}
& {\det}_{L^2_r((a,b))} \big(I_{L^2_r((a,b))} - (z - z_0) (H_{\varphi,I_2} - z_0 I_{L^2_r((a,b))})^{-1}\big)
\no \\
& \quad = [\Delta(z) - \cos(\varphi)]/[\Delta(z_0) - \cos(\varphi)], \quad z \in \bbC,     \lb{14.3.29} \\
& \tr_{L^2_r((a,b))} \big((H_{\varphi,I_2} - z I_{L^2_r((a,b))})^{-1}\big)   \no \\
& \quad = - (d/dz) \ln([\Delta(z) - \cos(\varphi)]), \quad z \in \rho(H_{\varphi,I_2}).       \lb{14.3.30}
\end{align}
\end{example}

\begin{example} $($The Krein--von Neumann extension.\,$)$ \lb{14.e3.6}
Consider the case $\varphi =0$, $A = R_K$, with
\begin{equation}
R_K = \begin{pmatrix} \theta(0,b,a) & \phi(0,b,a) \\[1mm]
\theta^{[1]}(0,b,a) & \phi^{[1]}(0,b,a) \end{pmatrix}.    \lb{14.3.31}
\end{equation}
As shown in \cite[Example~3.3]{CGNZ14}, the resulting operator $H_{0,R_K}$ represents the
Krein--von Neumann extension of $H_{min}$. Thus,
\begin{equation}
F_{0,R_K}(z) =  - 2 [D_K(z) - 1], \quad z \in \bbC,   \lb{14.3.32}
\end{equation}
where
\begin{align}
\begin{split}
D_K(z) &= \big[ \phi^{[1]}(0,b,a) \theta(z,b,a) + \theta(0,b,a) \phi^{[1]}(z,b,a) - \phi(0,b,a) \theta^{[1]}(z,b,a) \\
& \quad - \theta^{[1]}(0,b,a) \phi(z,b,a)\big]\big/2, \quad z \in \bbC.    \lb{14.3.33}
\end{split}
\end{align}
Hence,
\begin{align}
& {\det}_{L^2_r((a,b))} \big(I_{L^2_r((a,b))} - (z - z_0) (H_{0,R_K} - z_0 I_{L^2_r((a,b))})^{-1}\big)
\no \\
& \quad = [D_K(z) - 1]/[D_K(z_0) - 1], \quad z \in \bbC,     \lb{14.3.34} \\
& \tr_{L^2_r((a,b))} \big((H_{0,R_K} - z I_{L^2_r((a,b))})^{-1}\big)    \no \\
& \quad = - (d/dz) \ln(D_K(z) - 1), \quad z \in \rho(H_{0,R_K}).       \lb{14.3.35}
\end{align}
Because of the Wronskian relation \eqref{14.3.16a}, $D_K(0) = 1$, furthermore
\begin{equation}
D_K(z) - 1 \underset{z \to 0}{=} z^2 [c + \Oh(z)],  \lb{14.3.37}
\end{equation}
where
\begin{align}
\begin{split}
c &= \frac{1}{2} \Big(\dot \phi (0,b,a) \, \big[\dot \theta \big]^{[1]} (0,b,a) -
\big[\dot \phi\big]^{[1]} (0,b,a) \, \dot \theta (0,b,a)\Big)     \lb{14.3.38} \\
&= \frac{1}{2} W\Big(\dot \phi (0,\, \cdot \,,a), \dot \theta (0,\, \cdot \,,a) \Big)(b),
\end{split}
\end{align}
abbreviating $\dot{} = d/dz$.

The small-$z$ behavior \eqref{14.3.37} is clear from general results in \cite[Sect.~1.2]{Na67} or
\cite[Sect.~3.2]{Ze05} and is in accordance with a two-dimensional nullspace of the Krein--von Neumann extension on a bounded interval. To actually show that $c \neq 0$ $($in fact, $c < 0$$)$ in \eqref{14.3.37} one recalls that
$\dot \phi (z,x,a)$ and $\dot \theta (z,x,a)$ satisfy an inhomogeneous Sturm--Liouville equation and as a
result one obtains $($\,for $z \in \bbC$, $x \in [a,b]$$)$
\begin{align}
\begin{split}
\dot \theta (z,x,a) &= \theta (z,x,a) \int_a^x r(x') dx' \, \phi (z,x',a) \theta (z,x',a)      \\
& \quad - \phi (z,x,a) \int_a^x r(x') dx' \, \theta (z,x',a)^2,   \\
\big[\dot \theta \big]^{[1]} (z,x,a) &= \theta^{[1]} (z,x,a) \int_a^x r(x') dx' \, \phi (z,x',a) \theta (z,x',a)     \\
& \quad - \phi^{[1]} (z,x,a) \int_a^x r(x') dx' \, \theta (z,x',a)^2,   \\
\dot \phi (z,x,a) &= \theta (z,x,a) \int_a^x r(x') dx' \, \phi (z,x',a)^2    \lb{14.3.39} \\
& \quad - \phi (z,x,a) \int_a^x r(x') dx' \, \phi (z,x',a) \theta (z,x',a),   \\
\big[\dot \phi \big]^{[1]} (z,x,a) &= \theta^{[1]} (z,x,a) \int_a^x r(x') dx' \, \phi (z,x',a)^2     \\
& \quad - \phi^{[1]} (z,x,a) \int_a^x r(x') dx' \, \phi (z,x',a) \theta (z,x',a).
\end{split}
\end{align}
Equations  \eqref{14.3.39} and Cauchy's inequality then imply
\begin{align}
& W\big(\dot \phi (\lambda,\, \cdot \,,a), \dot \theta (\lambda,\, \cdot \,,a) \big)(x) =
\bigg[\int_a^x r(x') dx' \,  \phi (\lambda ,x',a) \theta (\lambda ,x',a) \bigg]^2    \\
& \quad - \bigg[\int_a^x r(x') dx' \, \theta (\lambda,x',a)^2\bigg] \bigg[\int_a^x r(x') dx' \,
\phi (\lambda,x',a)^2\bigg] \leq 0, \quad \lambda \in \bbR, \; x \in [a,b],    \no
\end{align}
using the fact that $\phi (\lambda,x,a)$ and $\theta (\lambda,x,a)$ are real-valued for
$\lambda \in \bbR$, $x \in [a,b]$. Equality in Cauchy's inequality for $x > a$ would imply that for
some $\alpha, \beta \in [0,\infty)$, $(\alpha, \beta) \neq (0,0)$,
\begin{equation}
\alpha \, \phi (\lambda,x',a) = \beta \, \theta (\lambda,x',a), \quad x' \in (a,x],
\end{equation}
a contradiction. Thus,
$W\big(\dot \phi (\lambda,\, \cdot \,,a), \dot \theta (\lambda,\, \cdot \,,a) \big)(x) < 0$ for
$\lambda \in \bbR$, $x \in (a, b]$ and hence $c < 0$ in \eqref{14.3.37}, \eqref{14.3.38}.
\end{example}

In the case of separated boundary conditions, that is, case $(i)$ in
Theorem \ref{14.t3.3}, one can shed more light on
$F_{\alpha, \beta}(z)$ in terms of appropriate Weyl solutions
$\psi_-(z, \, \cdot \,,a,\alpha)$ and $\psi_+(z, \, \cdot \,,a,\beta)$ that satisfy the boundary conditions in $\dom(H_{\alpha,\beta})$ in \eqref{14.3.13} at $a$ and $b$, respectively. Up to normalizations,
$\psi_\pm(z, \, \cdot \, ,a,\alpha )$ are given by
\begin{align}
& \psi_-(z, \, \cdot \,,a,\alpha)
= c_- \begin{cases} \cos(\alpha) \phi(z,\,\cdot\,,a)
- \sin(\alpha) \theta(z,\,\cdot\,,a), \quad \alpha \in (0,\pi), \\
\phi(z,\,\cdot\,,a), \quad \alpha = 0, \end{cases}   \\[2mm]
& \psi_+(z, \, \cdot \,,a,\beta) = c_+ \begin{cases} \big\{
\big[- \sin(\beta) \theta^{[1]}(z,b,a) + \cos(\beta) \theta(z,b,a)\big] \phi(z,\,\cdot\,,a)   \\
+ \big[\sin(\beta) \phi^{[1]}(z,b,a) - \cos(\beta) \phi(z,b,a)\big] \theta(z,\,\cdot\,,a)\big\},  \\
\hspace*{5.95cm} \beta \in (0,\pi), \\
\theta(z,b,a) \phi(z,\,\cdot\,,a) - \phi(z,b,a) \theta(z,\,\cdot\,,a), \quad \beta = 0, \end{cases}
\end{align}
and hence the Green's function of $H_{\alpha,\beta}$ is of the
semi-separable form,
\begin{align}
G_{\alpha,\beta}(z,x,x') & = (H_{\alpha,\beta} - z I_{L_r^2((a,b))})^{-1}(x,x')   \\
&= \f{1}{W(\psi_+(z, \, \cdot \,,a,\beta),\psi_-(z, \, \cdot \,,a,\alpha))}
\no \\
& \quad \times \begin{cases}
\psi_-(z,x,a,\alpha) \psi_+(z,x',a,\beta), & a \leq x \leq x' \leq b, \\
\psi_-(z,x',a,\alpha) \psi_+(z,x,a,\beta), & a \leq  x' \leq x \leq b.
\end{cases}    \no
\end{align}

A direct computation then reveals the following connection between $F_{\alpha, \beta}(z)$, $\psi_-(z, \, \cdot \,,a,\alpha)$, and
$\psi_+(z, \, \cdot \,,a,\beta)$,
\begin{align}
F_{\alpha,\beta}(z) &= \f{1}{W(\psi_+(z, \, \cdot \,,a,\beta),
\psi_-(z, \, \cdot \,,a,\alpha))}       \lb{14.3.42} \\
& \quad \times \begin{cases} \psi_-(z,b,a,0) \psi_+(z,a,a,0),
\quad \alpha = \beta = 0, \\[1mm]
\big[-\sin (\beta ) \psi^{[1]}_-(z,b,a,0) + \cos(\beta) \psi_-(z,b,a,0)\big]
\psi_+(z,a,a,\beta), \\[.5mm]
\hspace*{6.35cm} \alpha = 0, \, \beta \in (0,\pi), \\[1mm]
\psi_-(z,b,a,\alpha)
\big[\sin (\alpha ) \psi^{[1]}_+(z,a,a,0) + \cos(\alpha) \psi_+(z,a,a,0)\big], \\[.5mm]
\hspace*{6.05cm} \alpha \in (0,\pi), \, \beta = 0, \\[1mm]
\big[-\sin (\beta )\psi^{[1]}_-(z,b,a,\alpha) + \cos(\beta) \psi_-(z,b,a,\alpha)\big] \\[.5mm]
\times \big[\sin (\alpha ) \psi^{[1]}_+(z,a,a,\beta) + \cos (\alpha) \psi_+(z,a,a,\beta)\big],
\quad \alpha, \beta \in (0,\pi).
\end{cases}     \no
\end{align}

Combining \eqref{14.3.42} and \eqref{14.3.23} thus yields
\begin{align}
& \tr_{L^2_r((a,b))} \big((H_{\alpha,\beta} - z I_{L^2_r((a,b))})^{-1}\big)   \no \\
& \quad =
\begin{cases}
- (d/dz) \ln\big(\psi_+(z,a,a,0)\big/\psi^{[1]}_+(z,b,a,0)\big) \\[.5mm]
\quad = - (d/dz) \ln\big(\psi_-(z,b,a,0)\big/\psi^{[1]}_-(z,a,a,0)\big),
& \alpha = \beta = 0, \\[1mm]
- (d/dz) \ln(\psi_+(z,a,a,\beta)/\psi_+(z,b,a,\beta)),
& \alpha = 0, \, \beta \in (0,\pi), \\[1mm]
- (d/dz) \ln(\psi_-(z,b,a,\alpha )/\psi_-(z,a,a,\alpha)),
& \alpha \in (0,\pi), \, \beta = 0, \\[1mm]
- (d/dz) \ln\Big(\f{W(\psi_+(z, \, \cdot \,,a,\beta),
\psi_-(z, \, \cdot \,,a,\alpha))}{\psi_+(z,b,a,\beta) \psi_-(z,a,a,\alpha)}\Big),
& \alpha, \beta \in (0,\pi),
\end{cases}     \lb{14.3.43} \\
& \hspace*{8.37cm} z \in \rho(H_{\alpha,\beta}).    \no
\end{align}

\smallskip

Next, applying Theorem \ref{14.t3.4} in the context of Theorem \ref{14.t2.10} immediately yields results
about $\zeta$-regularized determinants (see also \cite{FGK15}, \cite[Chs.~2,3]{Ki02}, \cite{LT98}).

\begin{remark} \lb{14.r3.7}
In Example \ref{14.e3.17} we will consider a simple case with negative eigenvalues present.
Otherwise, in the examples of this section we will always assume that eigenvalues are non-negative.
If that is not the case, an appropriate imaginary part according to Theorem \ref{14.t2.10} has to be included.
\hfill $\diamond$
\end{remark}

To deal with $\zeta$-regularized determinants we now strengthen Hypothesis \ref{14.h3.1} 
and introduce the following assumptions\footnote{The original archive submission, as well as the published version of this article in J. Funct. Anal.~{\bf 276}, 520--562 (2019), had an additional, superfluous assumption $1/r\in L^\infty ((a,b); dx)$ in 
Hypothesis~\ref{14.h3.8}\,$(i)$.~(The same applies to Hypothesis~3.1 in the paper 
\cite{FGKS22}.)~In addition, the condition $1/(pr) \in L^{\infty}((a,b);dx)$ was originally missed in 
Hypothesis~\ref{14.h3.8}\,$(iv)$.} on $p, q, r$:

\begin{hypothesis} \lb{14.h3.8}
Suppose $p, q, r$ satisfy the following conditions: \\
$(i)$ \,\, $r>0$ a.e.\ on $(a,b)$, $r\in L^1((a,b); dx)$. \\
$(ii)$ \, $p>0$  a.e.\ on $(a,b)$, $1/p\in L^1((a,b); dx)$. \\
$(iii)$ $q\in L^1((a,b); dx)$, $q$ is real-valued a.e.\ on $(a,b)$.\\
$(iv)$ $p \,r$ and $(p\,r)'/r$ are absolutely continuous on $[a,b]$, and $1/(pr) \in L^{\infty}((a,b);dx)$.
\end{hypothesis}

The substitutions (cf.\ \cite[p.~2]{LS75})
\begin{align}
&\xi(x)=\dfrac{1}{c}\int_a^x dt\ [r(t)/p(t)]^{1/2},\quad \xi(x) \in [0,1] \, \text{ for } \, x \in [a,b],   \\
& \xi'(x) = c^{-1} [r(x)/p(x)]^{1/2} > 0 \, \text{ a.e.~on $(a,b)$,}     \\
&u(\xi)=[p(x(\xi))r(x(\xi))]^{1/4}y(x(\xi)),     
\end{align}
where $c$ is given by
\begin{equation}
c= \int_a^b dt \, [r(t)/p(t)]^{1/2},
\end{equation}
transform the Sturm--Liouville problem $(\tau y)(x)=zy(x)$ into
\begin{align}\lb{3.17}
- u''(\xi)+V(\xi)u(\xi)=c^2 zu(\xi),
\end{align}
and abbreviating 
\begin{equation} 
\mu(\xi)=[p(x(\xi))r(x(\xi))]^{1/4},
\end{equation}
one verifies that
\begin{align}
\begin{split}
V(\xi)&=\dfrac{\mu''(\xi)}{\mu(\xi)}+c^2\dfrac{q(x)}{r(x)}\\
&=-\dfrac{c^2}{16}\dfrac{1}{p(x)r(x)}\left[\dfrac{(p(x)r(x))'}{r(x)}\right]^2+\dfrac{c^2}{4}\dfrac{1}{r(x)}\dfrac{d}{dx}\left[\dfrac{(p(x)r(x))'}{r(x)}\right]+c^2\dfrac{q(x)}{r(x)}.
\end{split}
\end{align}
Hypothesis \ref{14.h3.8} guarantees\footnote{We are indebted to C.\ Bennewitz \cite{Be17} for a very helpful discussion of this issue.} that 
\begin{equation} 
V\in L^1((0,1);d\xi), 
\end{equation} 
and as a consequence one has asymptotically for $x \in (a,b]$,
\begin{align}
\begin{split}
\phi (z,x,a) \underset{|z|\to\infty}{=}& 2^{-1} z^{-1/2} [p(a) r(a) p(x) r(x)]^{-1/4}    \\
& \times \exp\bigg(z^{1/2} \int_a^x dt \, [r(t)/p(t)]^{1/2}\bigg)
 \big( 1+ \Oh\big(|z|^{-1/2}\big)\big),       \\
 \phi^{[1]} (z,x,a) \underset{|z|\to\infty}{=}& 2^{-1} [p(a) r(a)]^{-1/4} [p(x) r(x)]^{1/4}    \\
& \times \exp\bigg(z^{1/2} \int_a^x dt \, [r(t)/p(t)]^{1/2}\bigg)
 \big( 1+ \Oh\big(|z|^{-1/2}\big)\big),       \\
\theta (z,x,a) \underset{|z|\to\infty}{=}& 2 ^{-1} [r(a)/p(a)]^{1/2} [p(a) r(a) p(x) r(x)]^{-1/4}    \\
& \times
\exp\bigg(z^{1/2} \int_a^x dt \, [r(t)/p(t)]^{1/2}\bigg)
\big( 1+ \Oh\big(|z|^{-1/2}\big)\big),     \\
\theta^{[1]} (z,x,a) \underset{|z|\to\infty}{=}& 2 ^{-1} z^{1/2} p(a)^{-1} [p(a) r(a) p(x) r(x)]^{1/4}     \\
& \times
\exp\bigg(z^{1/2} \int_a^x dt \, [r(t)/p(t)]^{1/2}\bigg)
\big( 1+ \Oh\big(|z|^{-1/2}\big)\big).
\end{split}
\end{align}
This asymptotic behavior is used to guarantee that assumption \eqref{14.2.31} is satisfied in several of the following examples. 

\begin{theorem} \lb{14.t3.8}
Assume Hypothesis \ref{14.h3.8} and denote by $H_{\alpha , \beta, j}$ and $H_{\varphi , R, j}$, $j=1,2$,
the self-adjoint extensions of
$H_{min}$ as described in Theorem \ref{14.t3.3}\,$(i)$ and $(ii)$, respectively. Here the index $j$ refers to a potential $q_j$ in \eqref{14.3.1}, $j=1,2$.
Then the following items $(i)$ and $(ii)$ hold:\\
$(i)$
\begin{equation}
\zeta ' \left(0; H_{\alpha , \beta , 1}, H_{\alpha , \beta , 2} \right) = - \lim_{t\downarrow 0} \ln \bigg(\left| \frac{ F_{\alpha , \beta , 2} (t e^{i\theta})}{F_{\alpha , \beta , 1} (t e^{i\theta})} \,\,
 (te^{i\theta})^{[m(0, H_{\alpha , \beta , 1}) -m (0, H_{\alpha , \beta , 2})]}\right|\bigg).
 \end{equation}
$(ii)$
\begin{equation} \zeta ' \left(0; H_{\varphi , R , 1}, H_{\varphi , R , 2} \right)
= - \lim_{t\downarrow 0} \ln \bigg(\left| \frac{ F_{\varphi ,R , 2} (t e^{i\theta})}{F_{\varphi , R , 1} (t e^{i\theta})} \,\,
 (te^{i\theta})^{[m(0, H_{\varphi , R , 1}) -m (0, H_{\varphi , R , 2})]}\right|\bigg).
 \end{equation}
\end{theorem}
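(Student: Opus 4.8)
The plan is to reduce both statements $(i)$ and $(ii)$ to Theorem~\ref{14.t2.10} applied to the pair $(S_1,S_2)=(H_{\alpha,\beta,1},H_{\alpha,\beta,2})$ (respectively $(H_{\varphi,R,1},H_{\varphi,R,2})$), the only genuinely new work being the identification of the symmetrized Fredholm determinant occurring in \eqref{14.2.33a} with the ratio of the characteristic functions $F$ furnished by Theorem~\ref{14.t3.4}. First I would verify that the pair falls under Hypothesis~\ref{14.h2.7}: by Theorem~\ref{14.t3.2}\,$(iii)$ each operator has purely discrete spectrum, so $\sigma_{ess}=\emptyset$ and \eqref{14.2.28} holds trivially (after the harmless constant shift used throughout this section), while \eqref{14.3.10} shows each resolvent is individually trace class, whence so is their difference. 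The potential difference $q_2-q_1$ enters as the form perturbation $W$ of Hypothesis~\ref{14.h2.4} — its quadratic form is $f\mapsto\int_a^b dx\,(q_2-q_1)|f|^2$, which is bounded relative to the form of $H_{\alpha,\beta,1}$ since $q_j\in L^1$ and elements of $\dom(|H_{\alpha,\beta,1}|^{1/2})$ are continuous on $[a,b]$ — so with an admissible factorization $W=W_1^*W_2$ the trace relation \eqref{14.2.8B}, that is, \eqref{14.2.30a}, is in force for the pair.

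The crux is the determinant identity (writing $I=I_{L^2_r((a,b))}$)
\[
{\det}_{L^2_r((a,b))}\big((H_{\alpha,\beta,2}-zI)^{1/2}(H_{\alpha,\beta,1}-zI)^{-1}(H_{\alpha,\beta,2}-zI)^{1/2}\big)=F_{\alpha,\beta,2}(z)\big/F_{\alpha,\beta,1}(z).
\]
To obtain it I would equate the two available expressions for the trace of the resolvent difference: formula \eqref{14.2.30a} writes it as $-(d/dz)\ln$ of the left-hand determinant, whereas \eqref{14.3.23} gives $\tr_{L^2_r((a,b))}((H_{\alpha,\beta,j}-zI)^{-1})=-(d/dz)\ln F_{\alpha,\beta,j}(z)$, so that the difference equals $-(d/dz)\ln(F_{\alpha,\beta,2}/F_{\alpha,\beta,1})$. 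Since the two logarithmic derivatives agree on the connected set $\bbC\backslash[\inf(\sigma(H_{\alpha,\beta,1})\cup\sigma(H_{\alpha,\beta,2})),\infty)$, the determinant and the ratio differ only by a multiplicative constant $C$. I would pin down $C=1$ by letting $z\to-\infty$ along the real axis: the left-hand side tends to $1$ because by \eqref{14.2.8A} it equals ${\det}_{L^2_r((a,b))}(I+\ol{W_2(H_{\alpha,\beta,1}-zI)^{-1}W_1^*})$ and the resolvent vanishes in norm, while $F_{\alpha,\beta,2}/F_{\alpha,\beta,1}\to1$ because the leading exponential asymptotics of $\phi,\theta$ recorded before the theorem depend on $p,r$ but not on $q_j$ and hence cancel in the ratio. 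The coupled case is identical, using \eqref{14.3.25} and $F_{\varphi,R,j}$.

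With this identity in hand, both $(i)$ and $(ii)$ follow by substituting $F_2/F_1$ for the symmetrized determinant in \eqref{14.2.33a} of Theorem~\ref{14.t2.10}; the absolute value in the statement records the positive constant $C_0$ of \eqref{14.2.34}, and, as noted in Remark~\ref{14.r3.7}, the purely imaginary term $i\pi(n_2-n_1)$ of \eqref{14.2.33} is appended when negative eigenvalues are present. What remains is the decay hypothesis \eqref{14.2.31}, which I expect to be the principal obstacle. The bound $\Oh(1)$ as $|z|\to0$ is immediate since $0$ lies off the contour and outside both spectra. For $|z|\to\infty$ one writes the quantity in \eqref{14.2.31} as $\tr_{L^2_r((a,b))}((H_{\alpha,\beta,2}-zI)^{-1}-(H_{\alpha,\beta,1}-zI)^{-1})+z^{-1}[m(0;H_{\alpha,\beta,2})-m(0;H_{\alpha,\beta,1})]$ and estimates the resolvent-difference trace through the large-$|z|$ asymptotics of $\phi,\theta$. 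Here Hypothesis~\ref{14.h3.8} (strengthening \ref{14.h3.1}) is essential: the Liouville transformation then yields a transformed potential $V\in L^1((0,1);dx)$, the remainders $1+\Oh(|z|^{-1/2})$ are uniform, and the $q$-independent leading Weyl terms cancel, leaving a potential-dependent remainder of order $\Oh(|z|^{-3/2})$. The delicate point is the careful bookkeeping of the residual $z^{-1}$ zero-mode corrections through the contour deformation of Theorem~\ref{14.t2.8}, which the regularizing factor $z^{[m(0;H_1)-m(0;H_2)]}$ of \eqref{14.2.34} is precisely designed to absorb.
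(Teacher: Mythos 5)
Your proposal is correct and takes essentially the same route as the paper: the paper's proof is precisely the one-line reduction to Theorems \ref{14.t2.8} and \ref{14.t2.10} with $S_j = H_{\alpha,\beta,j}$ (resp.\ $S_j = H_{\varphi,R,j}$), and your added verifications --- Hypothesis \ref{14.h2.7}, the identification of the symmetrized determinant with $F_{\cdot,2}/F_{\cdot,1}$ normalized to constant $1$ via $z \to -\infty$, and the decay hypothesis \eqref{14.2.31} via the large-$|z|$ asymptotics of $\phi, \theta$ --- are exactly the details the paper leaves implicit in ``follows immediately.'' One small correction: the $\Oh(1)$ bound as $|z| \to 0$ in \eqref{14.2.31} does not hold ``since $0$ lies outside both spectra'' (zero modes are explicitly allowed, with $m(0;H_j) \neq 0$ appearing in the statement); rather, it holds because the projections $I_{\cH} - P(0;S_j)$ remove the poles at $z=0$, so the projected resolvent traces are analytic near $0$ --- as your own rewriting of the quantity as the trace of the resolvent difference plus $z^{-1}[m(0;S_2)-m(0;S_1)]$ in fact already shows.
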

\begin{proof}
This follows immediately from Theorem \ref{14.t2.8} and Theorem \ref{14.t2.10} applied to $S_j = H_{\alpha , \beta , j}$, respectively, $S_j = H_{\varphi , R, j}$, $j=1,2$.
\end{proof}

\begin{remark} \lb{14.r3.9}
In the absence of zero eigenvalues of $H_{\alpha , \beta , j}$, respectively, $H_{\varphi , R, j}$, $j=1,2$, these results simplify to
\begin{equation}
\zeta ' \left( 0; H_{\alpha , \beta , 1}, H_{\alpha , \beta , 2} \right) = \ln \bigg(\left|\frac{ F_{\alpha , \beta , 1} (0) } { F_{\alpha , \beta , 2} (0)  } \right|\bigg),
\end{equation}
respectively,
\begin{equation}
\zeta ' \left( 0; H_{\varphi , R , 1}, H_{\varphi , R , 2} \right) = \ln \bigg(\left|\frac{ F_{\varphi, R , 1} (0) } { F_{\varphi , R , 2} (0) } \right|\bigg).
\end{equation}
In case there are zero eigenvalues, a suitable energy shift will again lead to this case.
\hfill $\diamond$
\end{remark}

\begin{example} \lb{14.e3.10}
If one of the potentials vanishes, say $q_1=0$, more explicit results can be obtained. We consider separated boundary conditions with no zero eigenvalues present for $j=1, 2$.
Then
\begin{align}
\begin{split}
F_{\alpha , \beta , j} (0) & = \cos (\alpha) \left[ - \sin (\beta) \phi_j^{[1]} (0,b,a) + \cos (\beta) \phi_j (0,b,a) \right]  \\
 & \quad - \sin (\alpha) \left[ - \sin (\beta) \theta_j^{[1]} (0,b,a) + \cos (\beta) \theta _j (0,b,a) \right],
\quad  j=1,2.
 \end{split}
 \end{align}
For $q_1 =0$, $\theta _1 (0,x,a)$ and $\phi_1 (0,x,a)$ satisfy the initial value problems
\begin{align}
\begin{split}
- \bigg(\frac d {dx} p(x) \frac d {dx} \bigg) \theta_1 (0,x,a) &=0, \quad \quad \theta_1 (0,a,a) =1, \quad \theta _1 ^{[1]} (0,a,a) =0, \\\
 - \bigg(\frac d {dx} p(x) \frac d {dx} \bigg) \phi_1 (0,x,a) &=0, \quad \quad \phi_1 (0,a,a) =0, \quad \phi _1 ^{[1]} (0,a,a) =1,
\end{split}
\end{align}
solutions of which are
\begin{equation}
\phi_1 (0,x,a) = \int_a^x dt \, p(t)^{-1}, \quad \theta_1 (0,x,a) =1. \lb{14.3.p=r=1}
\end{equation}
Then one can show that
\begin{equation}
F_{\alpha , \beta , 1} (0) = - \sin (\alpha + \beta ) + \cos (\alpha) \cos (\beta) \int_a^b dt \, p(t)^{-1}.
\end{equation}
The relative $\zeta$-regularized determinant in this case then reads
\begin{equation}
\zeta ' \left( 0; H_{\alpha , \beta , 1} , H_{\alpha , \beta , 2} \right)
= \ln \Bigg(\left| \frac{ \cos (\alpha) \cos (\beta) \int_a^b dt \, p(t)^{-1} - \sin (\alpha + \beta) }
{F_{\alpha , \beta , 2} (0) } \right|\Bigg).\lb{14.3.prgen}
\end{equation}
\end{example}

\begin{example} \lb{14.e3.11}
Restricting Example \ref{14.e3.10} to the case $p(x) = r(x) =1$, the $\zeta$-determinant
$\zeta ' (0; H_{\alpha , \beta , 2})$ can be computed explicitly. First, one notes that
under Hypothesis \ref{14.h3.8}, the $\zeta$-function can be analytically continued to a neighborhood
of $s=0$ and $\zeta ' (0; H_{\alpha , \beta, 2} ) $ is
a well-defined quantity; similarly, $\zeta ' (0; H_{\alpha , \beta , 1})$ is well-defined for $q_1 =0$.
Computing $\zeta ' (0; H_{\alpha , \beta , 1})$, employing \eqref{14.3.prgen}, one obtains
$\zeta ' (0; H_{\alpha , \beta , 2})$.

First we assume there are no zero eigenvalues for $j=1$, which is the case if
\begin{equation}
(b-a) \cos (\alpha) \cos (\beta) - \sin (\alpha + \beta ) \neq 0.
\end{equation}
Next, one notes that
\begin{equation}
\phi_1 (z,x,a) = z^{-1/2} \sin \big( z^{1/2} (x-a) \big), \quad
\theta _1 (z,x,a) = \cos \big( z^{1/2} (x-a)\big),
\end{equation}
and thus from \eqref{14.3.21},
\begin{align}
F_{\alpha , \beta , 1} (z) &= \cos (\alpha) \big[ - \sin (\beta) \cos \big( z^{1/2} (b-a) \big)
+ z^{-1/2} \cos (\beta)  \sin \big( z^{1/2} (b-a) \big) \big]     \no \\
& \quad - \sin (\alpha) \big[z^{1/2} \sin (\beta) \sin \big( z^{1/2} (b-a) \big)
+ \cos (\beta) \cos \big( z^{1/2} (b-a) \big) \big].    \lb{14.3.fab1z}
\end{align}
The details of what follows depend on the boundary conditions imposed. In this example, we consider $\alpha , \beta \in (0,\pi )$.
Along the relevant contour, as $|z|\to\infty$, one has $\Im \big(z^{1/2}\big) >0$ and the asymptotics for the boundary conditions considered reads
\begin{equation}
F_{\alpha , \beta , 1} (z) = F_{\alpha , \beta , 1} ^{asym} (z) \big[ 1+ \Oh\big( z^{-1/2} \big) \big],
\end{equation}
where
\begin{equation}
F_{\alpha , \beta , 1} ^{asym} (z) = - (i /2) z^{1/2} \sin(\alpha) \sin(\beta) e^{-i z^{1/2} (b-a)} .
\lb{14.3.11asym}
\end{equation}
Adding and subtracting this asymptotics where applicable, we rewrite the $\zeta$-function
for $H_{\alpha , \beta , 1}$ in the form
\begin{align}
\zeta (s; H_{\alpha , \beta , 1} ) &= e^{is (\pi - \theta )} \frac{\sin (\pi s)} \pi \int_0^1 dt \, t^{-s}
\frac d {dt} \ln \big(F_{\alpha , \beta , 1} \left(t e^{i\theta} \right)\big)  \no \\
& \quad +  e^{is (\pi - \theta )} \frac{\sin (\pi s)} \pi \int_1^\infty dt \, t^{-s} \frac d {dt}
\ln \bigg(\frac{F_{\alpha , \beta , 1} \left(t e^{i\theta} \right)} {F_{\alpha , \beta , 1}^{asym} \left(te^{i\theta } \right)}\bigg)    \no \\
& \quad + \zeta ^{asym} (s; H_{\alpha , \beta , 1} ),\lb{14.3.subasym}
\end{align}
where
\begin{equation}
\zeta ^{asym} (s; H_{\alpha , \beta , 1} ) =  e^{is (\pi - \theta )} \frac{\sin (\pi s)} \pi \int_1^\infty dt \,
t^{-s} \frac d {dt} \ln \big(F_{\alpha , \beta , 1}^{asym} \left(te^{i\theta } \right)\big).  \lb{14.3.70}
\end{equation}
This representation is valid for $\frac 1 2 < \Re (s) < 1$. The term \eqref{14.3.70} is easily computed and yields
\begin{equation}
\zeta^{asym} (s;H_{\alpha , \beta , 1} ) = e^{is(\pi - \theta )} \frac{\sin (\pi s)} \pi \left[ \frac 1 {2s} - \frac{i} 2 e^{i \theta/ 2} \f{(b-a)}{s- (1/2)} \right],
\end{equation}
yielding its analytic continuation to $-\frac 1 2 < \Re (s) < 1$.
The $\zeta$-determinant for $H_{\alpha , \beta , 1}$ then follows from
\begin{align}
\zeta ' (0; H_{\alpha , \beta , 1} ) &= \Re \left( \ln \big(F_{\alpha , \beta , 1} ^{asym} \left(e^{i\theta}\big) \right) -
\ln (F_{\alpha , \beta , 1} (0)) + \zeta ^{asym \,\, \prime}  (0; H_{\alpha , \beta , 1} ) \right)     \no \\
&=  - \ln \bigg(\left| \frac{ 2 \left[ \cos (\alpha) \, \cos (\beta) \,\,(b-a) - \sin (\alpha + \beta ) \right] }{\sin(\alpha)  \sin(\beta) } \right|\bigg).     \lb{14.3.albe0fi}
\end{align}
Using expression \eqref{14.3.albe0fi} in \eqref{14.3.prgen} yields
\begin{equation}
\zeta ' (0; H_{\alpha , \beta , 2} ) = \ln \bigg(\left|\frac{ \sin(\alpha)  \sin(\beta) }
{2 F_{\alpha , \beta , 2} (0) } \right|\bigg).\lb{14.3.sipofi}
\end{equation}
If there is a zero eigenvalue for $j=1$, namely, if
\begin{equation}
(b-a) \cos (\alpha) \cos (\beta)  - \sin (\alpha + \beta ) =0,
\end{equation}
the relevant formula for the relative $\zeta$-determinant is
\begin{equation}
\zeta ' (0; H_{\alpha , \beta , 1} , H_{\alpha , \beta , 2} ) = \ln \bigg(\left| \frac{ F_{\alpha , \beta , 1} ' (0) } {F_{\alpha , \beta , 2} (0) }\right|\bigg).
\end{equation}
Employing \eqref{14.3.fab1z}, this can be cast in the form
\begin{equation}
\zeta ' (0; H_{\alpha , \beta , 1}, H_{\alpha , \beta , 2} ) = \ln \bigg(\left| \frac{ (b-a) \left[ (b-a) \sin (\alpha + \beta ) -3 \sin(\alpha)  \sin \beta\right]}{3 F_{\alpha , \beta , 2} (0) } \right|\bigg).
\end{equation}
In order to compute $\zeta ' (0; H_{\alpha , \beta , 2} )$, the part $\zeta ' (0; H_{\alpha , \beta , 1} )$ can be computed as before, the only difference
being that $F_{\alpha , \beta , 1} (z)$ is replaced by $F_{\alpha , \beta , 1} (z) /z$. The final answer then reads
\begin{equation}
\zeta ' (0; H_{\alpha , \beta , 1}) = - \ln \bigg(\left| 2 (b-a) \left[ 1-\frac{(b-a) \sin (\alpha + \beta )}{3 \sin(\alpha)  \sin \beta} \right] \right|\bigg).
\lb{14.3.78}
\end{equation}
$($This confirms the known result in the case of Neumann boundary conditions at $x=a$ and $x=b$, that is, for
$\alpha = \beta = \pi/2$.$)$ From \eqref{14.3.78} it is immediate that
\begin{equation}
\zeta ' (0; H_{\alpha , \beta , 2} ) = \ln \bigg(\left| \frac{  \sin(\alpha)  \sin(\beta) } {2 F_{\alpha , \beta , 2} (0) }\right|\bigg). \lb{14.3.ab22}
\end{equation}
\end{example}

\begin{example} \lb{14.e3.12}
Next, consider the Dirichlet boundary condition at $x=a$ and a Robin boundary condition at $x=b$, that is, $\alpha =0$ and $\beta \in (0, \pi )$. In the absence of zero eigenvalues one then has
\begin{equation}
\zeta ' (0; H_{0, \beta , 1}, H_{0, \beta , 2} ) = \ln \bigg(\left| \frac{ F_{0, \beta , 1} (0) } { F_{0, \beta , 2}(0) } \right|\bigg).
\end{equation}
In order to find the $\zeta$-determinant $\zeta ' (0; H_{0, \beta , 2} )$ we consider as before $q_1 =0$. Then,
\begin{equation}
F_{0, \beta , 1} (z) = - \sin(\beta) \cos \big( z^{1/2} (b-a) \big)
+ z^{-1/2} \cos \big(\beta) \sin (z^{1/2} (b-a) \big) .
\end{equation}
The relevant asymptotic large-$|z|$ behavior is
\begin{equation}
F_{0, \beta , 1} ^{asym} (z) = - \sin(\beta) e^{-i z^{1/2} (b-a)}/2,
\end{equation}
and proceeding along the lines of previous computations, one finds
\begin{equation}
\zeta ' (0; H_{0, \beta , 1} )
= \ln \bigg(\left| \frac{ \sin(\beta) } {2 \left[ \sin(\beta) -(b-a) \cos (\beta) \right] } \right|\bigg).  \lb{14.3.83}
\end{equation}
$($Again, this confirms the known case where $\beta = \pi /2$$)$. From \eqref{14.3.83} it is immediate that
\begin{equation}
\zeta ' (0; H_{0, \beta , 2} ) = \ln \bigg(\left| \frac{  \sin(\beta) }{2 F_{0, \beta , 2} (0) }\right|\bigg).
\lb{14.3.ab23}
\end{equation}
\end{example}

\begin{example} \lb{14.e3.13}
For Dirichlet boundary conditions at both endpoints, that is, for $\alpha = \beta =0$, the relative
zeta-determinant follows from
\begin{equation}
\zeta ' (0; H_{0,0,1} , H_{0,0,2} ) = \ln \bigg(\left| \frac{ F_{0,0,1} (0)}{F_{0,0,2} (0)} \right|\bigg).
\end{equation}
For $q_1 =0$, from
\begin{equation}
F_{0,0,1} (z) = z^{-1/2} \sin \big(z^{1/2} (b-a)\big),
\end{equation}
the relevant asymptotics is
\begin{equation}
F_{0,0,1}^{asym} (z) = (i/2) z^{-1/2} e^{-i z^{1/2} (b-a)}.
\end{equation}
Thus, one finds
\begin{equation}
\zeta ' (0;H_{0,0,1}) = - \ln (2 (b-a)),
\end{equation}
and hence,
\begin{equation}
\zeta ' (0; H_{0,0,2}) = - \ln (|2F_{0,0,2} (0)|). \lb{14.3.ab24}
\end{equation}
\end{example}

\begin{remark} \lb{14.r3.14}
Under the assumptions of Example \ref{14.e3.11} no additional computations are needed when
considering certain relative determinants for different boundary conditions. Indeed,
for $\alpha_j,\beta_j\in (0,\pi)$, $j=1,2$, eq.~\eqref{14.3.sipofi} is valid replacing $\alpha,\beta\to \alpha_1,\beta_1$ and $F_{\alpha ,\beta,2} \to F_{\alpha_2,\beta_2,2}$, and
eq.~\eqref{14.3.ab23} is valid replacing $\beta \to \beta_1$, and $F_{0,\beta,2} \to F_{0,\beta_2,2}$.
\hfill $\diamond$
\end{remark}

\begin{example} \lb{14.e3.15}
As an example for coupled boundary conditions we reconsider the Krein--von Neumann extension, Example \ref{14.e3.6}.
We first note that different potentials $q_1 \neq q_2$ lead to different Krein--von Neumann extensions $R_{K_1} \neq R_{K_2}$; see, for instance, \eqref{14.3.31}. Nevertheless,
under the assumptions made, Theorem \ref{14.t3.8}\,$(ii)$ remains valid and
\begin{equation}
\zeta ' (0; H_{0,R_{K_1},1},H_{0,R_{K_2},2}) = \lim_{z\to 0} \ln \bigg(\left| \frac{F_{0,R_{K_1},1} (z) }{F_{0,R_{K_2},2} (z)} \right|\bigg),
\end{equation}
and from \eqref{14.3.37} one finds
\begin{equation}
\zeta ' (0; H_{0,R_{K_1},1},H_{0,R_{K_2},2}) =  \ln (| c_1/c_2|) ,\lb{14.3.kvndet1}
\end{equation}
with
\begin{equation}
c_j = \frac 1 2 \left( \dot \theta_j^{[1]} (0,b,a) \,\, \dot \phi _j (0,b,a) - \dot \theta_j (0,b,a) \dot \phi_j^{[1]} (0,b,a)\right), \quad j = 1,2.      \lb{14.3.kvndet2}
\end{equation}

For the case of vanishing potential, $q_1=0$, the constant $c_1$ can be determined explicitly. To this end we need the small-$z$ expansion of the solutions of
\begin{equation}
- \bigg(\frac 1 {r(x)} \frac d {dx} p(x) \frac d {dx} \bigg) \theta_1 (z,x,a) = z \theta_1 (z,x,a),
\quad \theta_1 (z,a,a) =1, \,\, \theta_1 ^{[1]} (z,a,a) =0, \lb{14.3.kvnszt}
\end{equation}
and
\begin{equation}
- \bigg(\frac 1 {r(x)} \frac d {dx} p(x) \frac d {dx} \bigg) \phi_1 (z,x,a) = z \phi_1 (z,x,a), \quad \phi_1 (z,a,a) =0, \,\, \phi_1 ^{[1]} (z,a,a) =1. \lb{14.3.kvnszp}
\end{equation}
Expanding
\begin{equation}
\theta _1 (z,x,a) = \theta _1 (0,x,a) + z \dot \theta _{1} (0,x,a) + \Oh\big(z^2\big),
\end{equation}
one compares $ \Oh(z)$-terms in \eqref{14.3.kvnszt} to find with \eqref{14.3.p=r=1}
\begin{equation}
\bigg(\frac d {dx} p(x) \frac d {dx} \bigg) \dot \theta_1 (0,x,a) = - r(x) .   \lb{14.3.kvndet3}
\end{equation}
Integrating, this yields
\begin{equation}
\dot \theta_{1} ^{[1]} (0,x,a) - \dot \theta _{1} ^{[1]} (0,a,a) = - \int_a^x du \,r(u),
\end{equation}
but given $\theta_1 ^{[1]} (z,a,a) =0$, one concludes that
\begin{equation}
\dot \theta_1 ^{[1]} (0,x,a) = - \int_a^x du \, r(u).    \lb{14.3.kvndet4}
\end{equation}
Similarly, integrating \eqref{14.3.kvndet4},
\begin{equation}
\dot \theta_{1} (0,b,a) = - \int_a^b dv \, p(v)^{-1} \int_a^v du \, r(u).
\end{equation}
Proceeding in the same way for $\phi_1 (z,x,a)$, one first shows
\begin{equation}
\bigg(\frac d {dx} p(x) \frac d {dx} \bigg) \dot \phi_1 (0,x,a) = - r(x) \int_a^x du \, p(u)^{-1},
\end{equation}
and thus
\begin{equation}
\dot \phi_{1} ^{[1]} (0,x,a) = - \int_a^x dv \, r(v) \int_a^v du \, p(u)^{-1}.
\end{equation}
Furthermore,
\begin{equation}
\dot \phi_{1} (0,x,a) = - \int_a^x dw \, p(w)^{-1} \int_a^w dv \, r(v) \int_a^v du \, p(u)^{-1}.
\end{equation}
Altogether this yields
\begin{align}
\begin{split}
c_1 &= \frac 1 2 \left[ \left( \int_a^b dt \, r(t) \right) \left( \int_a^b dw \int_a^wdv \int_a^v du \,\, \frac{ r(v)}{p(w) p(u)} \right) \right.    \\
 & \quad \left. - \left( \int_a^b dv \int_a^v du \,\, \frac{r(u)}{p(v)} \right) \left( \int_a^bdw \int_a^wdt \, \frac{ r(w)} {p(t)} \right)\right].       \lb{14.3.cjclosed}
 \end{split}
\end{align}
\end{example}

\begin{example} \lb{14.e3.16}
For the particular case $r(x) = p(x) =1$, we now recompute the $\zeta$-determinant
$\zeta ' (0; H_{0,R_{K_2},2})$ by choosing $q_1=0$. First, from \eqref{14.3.cjclosed} one finds that 
\begin{equation}
c_1 = - (b-a)^4/24.
\end{equation}
From Example \ref{14.e3.6} one determines
\begin{equation}
F_{0,R_{K_1},1} (z) = 2 \big[1-\cos \big(z^{1/2} (b-a)\big)\big] - (b-a) z^{1/2} \sin \big( z^{1/2} (b-a)\big).
\end{equation}
The relevant leading asymptotics reads
\begin{equation}
F_{0,R_{K_1},1}^{asym} (z) = - i (b-a) z^{1/2} e^{-i z^{1/2} (b-a)}\big/2.
\end{equation}
The $zeta$-function for the Krein--von Neumann extension is therefore analyzed using
\begin{align}
\zeta (s;H_{0,R_{K_1},1}) &= e^{is (\pi - \theta) } \frac{\sin (\pi s)} \pi \int_0^\infty dt \, t^{-s} \frac d {dt} \ln\bigg(\frac{ F_{0,R_{K_1},1} (te^{i\theta})}{t^2 e^{2i\theta}}\bigg)       \no \\
&= e^{is (\pi - \theta) } \frac{\sin (\pi s)} \pi \int_0^1 dt \, t^{-s} \frac d {dt} \ln \left(\frac{ F_{0,R_{K_1},1} (te^{i\theta})}{t^2 e^{2i\theta}}\right)      \no \\
& \quad + e^{is (\pi - \theta) } \frac{\sin (\pi s)} \pi \int_1^\infty dt \, t^{-s} \frac d {dt}
\ln \left(\frac{ F_{0,R_{K_1},1} (te^{i\theta})}{F_{0,R_{K_1},1}^{asym}(t e^{i\theta})}\right)    \no \\
& \quad +\zeta^{asym}(s;H_{0,R_{K_1},1}), \label{kreinrpe}
\end{align}
where
\begin{align}
\zeta^{asym}(s;H_{0,R_{K_1},1}) &=
 e^{is (\pi - \theta) } \frac{\sin (\pi s)} \pi \int_1^\infty dt \, t^{-s} \frac d {dt}
 \ln \left(\frac{ F_{0,R_{K_1},1} ^{asym}(t e^{i\theta})}{t^2 e^{2i\theta}}\right)      \no \\
&= e^{is (\pi - \theta) } \frac{\sin (\pi s)} \pi \left( - \frac 3 {2s}
-\frac i 2 (b-a) e^{i \theta/2 } \frac{1}{s- (1/2)} \right),
\end{align}
such that
\begin{equation}
{\zeta ^{asym}} ' (0;H_{0,R_{K_1},1}) = i (b-a) e^{i\theta /2} + [3 i (\theta - \pi )/2].
\end{equation}
From (\ref{kreinrpe}) we then find
\begin{equation}
\zeta' (0; H_{0,R_{K_1},1} ) = - \ln \big((b-a)^3/6\big),
\end{equation}
in agreement with \cite{MCKB15}. Finally, this proves
\begin{equation}
\zeta ' (0; H_{0,R_{K_2},2}) = \zeta ' (0;H_{0,R_{K_1},1}) + \ln (|c_1/c_2|)
= \ln \bigg(\left| \frac{ b-a} {4c_2}\right|\bigg) .\lb{14.3.kvndetfinal}
\end{equation}
\end{example}

\begin{example} \lb{14.e3.17}
As our final example we consider a case where negative eigenvalues are present. Let $x\in (0, \pi )$
and $\tau_j = - (d^2/dx^2) - m_j^2$ with $m_j \in (n_j , n_j +1 )$, $n_j \in \bbN$, $j=1,2$.
Imposing Dirichlet boundary conditions at both endpoints, the eigenvalues are
$\lambda_\ell ^{(j)} = \ell^2 - m_j^2$, $\ell \in \bbN$, such that there are $n_j$ negative eigenvalues for $H_{0,0,j}$. The $\zeta$-function representation for each $j= 1,2$, can be found following
Examples \ref{14.e3.11} and \ref{14.e3.13}. We note that
\begin{align}
F_{0,0,j} (z) &= \big(z+m_j^2\big)^{-1/2} \sin \Big(\big(z+m_j^2\big)^{1/2} \pi \Big)   \no \\
&= \big(z+m_j^2\big)^{-1/2} (i/2)  e^{-i (z+m_j^2)^{1/2} \pi} \left( 1-e^{2 \pi i (z+m_j^2)^{1/2}}\right)
\no \\
&=: F_{0,0,j}^{asym} (z) \left( 1-e^{2 \pi i (z+m_j^2)^{1/2}}\right),
\end{align}
and hence,
\begin{equation}
\zeta (s; H_{0,0,j}) = e^{is (\pi -\theta )} \frac{\sin (\pi s)} \pi \int_0^\infty dt \, t^{-s} \frac d {dt} \ln \bigg(\frac{ F_{0,0,j} \left( te^{i\theta}\right)}{F_{0,0,j}^{asym} \left( te^{i\theta}\right)}\bigg)
+\zeta ^{asym} (s; H_{0,0,j}) ,
\end{equation}
where
\begin{equation}
\zeta^{asym} (s; H_{0,0,j} ) = e^{is (\pi -\theta )} \frac{\sin (\pi s)} \pi \int_0^\infty dt \, t^{-s} \frac d {dt} \ln \big(F_{0,0,j}^{asym} \big( t e^{i\theta} \big)\big),
\end{equation}
this representation being valid for $ (1/2) < \Re (s) < 1$.
From \cite[3.193, 3.194]{GR80} one infers
\begin{align}
& \int_0^\infty dt \,\, \frac{t^{-s}}{(t+M)^{1/2}} = \frac{M^{(1/2) -s} \Gamma (1-s) \Gamma \left( s-\frac 1 2 \right)} {\pi^{1/2}} ,  \\
& \int_0^\infty dt \,\, \frac{t^{-s}}{t+M} = \frac \pi {M^s \sin (\pi s)} , \label{14.3.intform}
\end{align}
and hence one obtains
\begin{equation}
\zeta^{asym} (s; H_{0,0,j}) = - \frac{ e^{is\pi}}{2m_j^{2s}} \left[ \frac{ i \pi^{1/2} m_j
\Gamma \left( s- (1/2) \right)}{\Gamma (s)} +1\right],
\end{equation}
implying
\begin{equation}
\zeta^{asym \prime} (0; H_{0,0,j}) = - i (\pi/2) + im_j\pi + \ln (m_j) .
\end{equation}
It then follows that
\begin{equation}
\zeta ' (0; H_{0,0,j}) = - \ln \left( 1-e^{2\pi im_j} \right) - i (\pi/2) + im_j\pi + \ln (m_j). \lb{14.3.prefin}
\end{equation}
In order to obtain the final answer explicitly, showing the relation between the imaginary part and the number of negative eigenvalues, we first note that
\begin{equation}
1-e^{2\pi im_j} = - 2 i e^{\pi i m_j} \sin (\pi m_j).
\end{equation}
A careful analysis of the argument of $1-e^{2\pi im_j}$ then shows that it equals 
$\pi [m_j-n_j- (1/2)]$, such that
\begin{equation}
\zeta ' (0;H_{0,0,j}) = i\pi n_j - \ln \bigg(\bigg| \frac{ 2 \sin (\pi m_j)}{m_j}\bigg|\bigg).   \lb{14.3.negevdet1}
\end{equation}
Considering instead
\begin{equation}
\zeta (s; H_{0,0,1},H_{0,0,2}) = e^{is (\pi - \theta )} \frac{ \sin (\pi s)} \pi \int_0^\infty dt \, t^{-s} \frac d {dt}
\ln \bigg(\frac{ F_{0,0,2} (t e^{i\theta})}{F_{0,0,1} (t e^{i\theta } )}\bigg),  \lb{14.3.negevdet2}
\end{equation}
in \eqref{14.3.negevdet1}, one obtains
\begin{equation}
\zeta ' (0; H_{0,0,1}, H_{0,0,2}) = i \pi (n_2-n_1) + \ln \bigg(\left| \frac{ \sin (\pi m_1)}{\sin (\pi m_2)} \,\, \frac{ m_2}{m_1} \right|\bigg).    \lb{14.3.negevdet3}
\end{equation}
The real part of this answer is readily reproduced from \eqref{14.2.33} in Theorem \ref{14.t2.10}.
However, even for this simple example, the behavior of $F_{0,0,1}(t e^{i\theta})/F_{0,0,2} (t e^{i\theta})$
along the integration range $t\in [0,\infty )$ is quite intricate so that finding the correct imaginary part from \eqref{14.3.negevdet2}, namely, from
\begin{equation}
\zeta ' (0; H_{0,0,1},H_{0,0,2}) = \int_0^\infty dt \, \frac d {dt} \ln \bigg(\frac{ F_{0,0,1} (t e^{i\theta})}{F_{0,0,2} (t e^{i\theta } )}\bigg),
\end{equation}
is rather involved, and appears to be next to impossible for more general cases.
\end{example}

For the case of Schr\"odinger operators with strongly singular potentials at one or both endpoints of a bounded interval, see \cite{KLP06}, \cite{Le98}, \cite{LV11}, \cite{Sp05}.

\section{Schr\"odinger Operators on a Half-Line} \lb{14.s4}

In our final section we illustrate some of the abstract notions in
Section \ref{14.s2} with the help of self-adjoint Schr\"odinger operators on the half-line
$\bbR_+ = (0,\infty)$. We will focus on the case of short-range potentials $q$ (cf.\ \eqref{14.4.1})
and hence the scattering theory situation which necessitates a comparison with the case $q = 0$
and thus illustrates the case of relative perturbation determinants, relative $\zeta$-functions, and
relative $\zeta$-function regularized determinants.

We assume that the potential coefficient $q$ satisfies the following conditions.

\begin{hypothesis} \lb{14.h4.1}
Suppose $q$ satisfies the short-range assumption
\begin{equation}
q\in L^1(\bbR_+; (1 + |x|) dx), \, \text{ $q$ is real-valued a.e.\ on $\bbR_+$.}      \lb{14.4.1}
\end{equation}
\end{hypothesis}

Given Hypothesis \ref{14.h4.1}, we take $\tau_+$ to be the Schr\"odinger  differential expression
\begin{equation} \lb{14.4.2}
\tau_+= -\frac{d^2}{dx^2} + q(x) \, \text{ for a.e.~$x\in \bbR_+$,}
\end{equation}
and note that $\tau_+$ is regular at $0$ and in the limit point case at $+\infty$.
The \emph{maximal operator}
$H_{+,max}$ in $L^2(\bbR_+;dx)$ associated with $\tau_+$ is defined by
\begin{align}
&H_{+,max} f=\tau_+ f,    \no \\
& \, f \in \dom(H_{+,max})= \big\{g\in L^2(\bbR_+;dx) \, \big| \, g,  g' \in AC([0,b]) \, \text{for all $b > 0$};     \lb{14.4.3} \\
& \hspace*{7.85cm} \tau_+ g\in  L^2(\bbR_+;dx)\big\},   \no
\intertext{while the \emph{minimal operator} $H_{+,min}$ in
$L^2(\bbR_+;dx)$ associated with
$\tau_+$ is given by}
&H_{+,min} f=\tau_+ f,    \no \\
& \, f \in \dom(H_{+,min})= \big\{g\in L^2(\bbR_+;dx) \, \big| \, g,  g' \in
AC([0,b]) \, \text{for all $b > 0$};   \lb{14.4.4} \\
&\hspace*{5.15cm}g(0)=g'(0)=0; \, \tau_+ g\in L^2(\bbR_+;dx)\big\}.      \no
\end{align}

Again, one notes that the operator $H_{+,min}$ is symmetric and that
\begin{equation}
H_{+,min}^*=H_{+,max}, \quad H_{+,max}^*=H_{+,min}.     \lb{14.4.5}
\end{equation}
Moreover, all self-adjoint extensions of $H_{+,min}$ are given by the
one-parameter family $H_{+,\alpha}$ in $L^2(\bbR_+;dx)$,
\begin{align}
&H_{+,\alpha} f=\tau_+ f,    \no \\
& \, f \in \dom(H_{+,\alpha})= \big\{g\in L^2(\bbR_+;dx) \, \big| \, g,  g' \in
AC([0,b]) \, \text{for all $b > 0$};   \lb{14.4.6} \\
&\hspace*{3.05cm}\sin(\alpha) g'(0) + \cos(\alpha) g(0) = 0; \, \tau_+ g\in L^2(\bbR_+;dx)\big\},      \no \\
& \hspace*{9.15cm} \alpha \in [0, \pi).  \no
\end{align}
The corresponding comparison operator with vanishing potential
coefficient $q \equiv 0$ will be denoted by $H_{+,\alpha}^{(0)}$,
$\alpha \in [0, \pi)$.

Next, introducing the Jost solutions
\begin{align}
& f_+ (z,x) = f_+^{(0)}(z,x) - \int_x^{\infty} dx' \, z^{-1/2} \sin(z^{1/2}(x-x'))
q(x') f_+(z,x'),    \lb{14.4.7} \\
& f_+^{(0)}(z,x) = e^{i z^{1/2} x}, \quad z \in \bbC, \; \Im\big(z^{1/2}\big) \geq 0,
\; x \geq 0,    \lb{14.4.8}
\end{align}
satisfying $\tau_+y = zy$, $z \in \bbC$, on $\bbR_+$,
and abbreviating $I_{L^2(\bbR_+; dx)} = I_+$, and
\begin{equation}
v = |q|^{1/2}, \; u = v \, \sgn(q), \, \text{ such that } \, q = uv = vu,   \lb{14.4.9}
\end{equation}
one infers the following facts:
\begin{align}
& {\det}_{L^2(\bbR_+;dx)} \Big(\ol{(H_{+,\alpha} - z I_+)^{1/2}
\big(H_{+,\alpha}^{(0)} - z I_+\big)^{-1} (H_{+,\alpha} - z I_+)^{1/2}}\Big)   \no \\
& \quad = {\det}_{L^2(\bbR_+;dx)}
\Big(I_+  + \ol{u \big(H_{+,\alpha}^{(0)} - z I_+\big)^{-1} v}\Big)
\lb{14.4.10} \\
& \quad = \f{\sin(\alpha) f_+'(z,0) + \cos(\alpha) f_+(z,0)}
{\sin(\alpha) i z^{1/2} + \cos(\alpha)}, \quad \alpha \in [0,\pi), \;
z \in \rho(H_{+,\alpha}^{(0)}) \cap \rho(H_{+,\alpha}).    \no
\end{align}
Here the first equality in \eqref{14.4.10} is shown as in the abstract context \eqref{14.2.8A}--\eqref{14.2.8AA}, and the second equality in \eqref{14.4.10} for the Dirichlet and Neumann cases $\alpha = 0$, $\alpha = \pi/2$ has
been discussed in \cite{GLMZ05}, \cite{GM03}, \cite{GMZ07}; the general case $\alpha \in [0,\pi)$ is proved in \cite[Theorem~2.6]{GN12}.

Since
\begin{equation}
\sigma_{ess}(H_{+,\alpha}) = [0, \infty), \quad \alpha \in [0, \pi),
\end{equation}
we now shift all operators $H_{+,\alpha}$ by $\lambda_1 I_+$, with a fixed
$\lambda_1 > 0$, and consider
\begin{equation}
H_{+,\alpha}(\lambda_1) = H_{+,\alpha} + \lambda_1 I_+, \quad
\alpha \in [0,\pi),
\end{equation}
from this point on and hence obtain
\begin{align}
& {\det}_{L^2(\bbR_+;dx)} \Big(\ol{(H_{+,\alpha}(\lambda_1) - z I_+)^{1/2}
\big(H_{+,\alpha}^{(0)}(\lambda_1) - z I_+\big)^{-1}
(H_{+,\alpha}(\lambda_1) - z I_+)^{1/2}}\Big)   \no \\
& \quad = {\det}_{L^2(\bbR_+;dx)}
\Big(I_+  + \ol{u \big(H_{+,\alpha}^{(0)}(\lambda_1) - z I_+\big)^{-1} v}\Big)
\no \\
& \quad = \f{\sin(\alpha) f_+'(z-\lambda_1,0)
+ \cos(\alpha) f_+(z-\lambda_1,0)}
{\sin(\alpha) i (z-\lambda_1)^{1/2}
+ \cos(\alpha)},      \lb{14.4.14} \\
& \hspace*{8mm}
\alpha \in [0,\pi), \;
z \in \rho\big(H_{+,\alpha}^{(0)}(\lambda_1)\big) \cap \rho(H_{+,\alpha}(\lambda_1)).
\no
\end{align}
In this half-line context all discrete eigenvalues
$H_{+,\alpha}(\lambda_1)$ (i.e., all eigenvalues of
$H_{+,\alpha}(\lambda_1)$ below $\lambda_1$) are simple and hence
\begin{equation}
m(0; H_{+,\alpha}(\lambda_1)) \in \{0,1\}, \quad \alpha \in [0,\pi).
\end{equation}
In addition, it is known that under Hypothesis \ref{14.h4.1}, the
threshold of the
essential spectrum of $H_{+,\alpha}(\lambda_1)$,
$\lambda_1$, is never an eigenvalue of $H_{+,\alpha}(\lambda_1)$.

Iterating the Volterra integral equation \eqref{14.4.7} for
$f_+(z-\lambda_1,0)$, and analogously for its $x$-derivative, yields
uniform asymptotic expansions near $z=0$ and as $z \to \infty$ (in
terms of powers of $|z|^{-1/2}$). The same applies to their $z$-derivatives
(cf., e.g., \cite[Ch.~I]{CS89}) and explicit computations yield the following.
For fixed $0 < \varepsilon_0$ sufficiently small, and using the abbreviation
$\bbC_{\varepsilon_0} = \bbC \backslash B(\lambda_1; \varepsilon_0)$,
with $B(z_0; r_0)$ the open ball in $\bbC$ of radius $r_0 > 0$ centered at $z_0 \in \bbC$, one obtains
\begin{align}
& f_+(z - \lambda_1,0)
\underset{\substack{|z|\to\infty \\ z \in \bbC_{\varepsilon_0}}}{=}
1 - \f{i}{2 z^{1/2}} \int_0^{\infty} dx_1 \, \big[e^{2i(z - \lambda_1)^{1/2}x_1}
- 1\big] q(x_1) + \Oh\big(|z|^{-1}\big),   \no \\
& f_+'(z - \lambda_1,0)
\underset{\substack{|z|\to\infty \\ z \in \bbC_{\varepsilon_0}}}{=}
i z^{1/2} - \f{1}{2} \int_0^{\infty} dx_1 \, \big[e^{2i(z - \lambda_1)^{1/2}x_1}
+ 1\big] q(x_1) + \Oh\big(|z|^{-1/2}\big),    \no \\
& \dot f_+(z - \lambda_1,0)
\underset{\substack{|z|\to\infty \\ z \in \bbC_{\varepsilon_0}}}{=}
\f{1}{2 z} \int_0^{\infty} dx_1 \, e^{2i(z - \lambda_1)^{1/2}x_1}
x_1 q(x_1) + \Oh\big(|z|^{-3/2}\big),    \no \\
& \dot f_+ \phantom{f}\hspace{-.45cm}' \,\,(z - \lambda_1,0)
\underset{\substack{|z|\to\infty \\ z \in \bbC_{\varepsilon_0}}}{=}
\f{i}{2 z^{1/2}}
- \f{i}{2 z^{1/2}} \int_0^{\infty} dx_1 \, e^{2i(z - \lambda_1)^{1/2}x_1}
x_1 q(x_1) + \Oh\big(|z|^{-1}\big)   \lb{14.4.16}
\end{align}
(abbreviating again $\dot{} = d/dz$).

Given the asymptotic expansions \eqref{14.4.16} as $|z| \to \infty$, and
employing the fact that the functions $f_+(\, \cdot - \lambda_1,0),
f_+'(\, \cdot - \lambda_1,0),
\dot f_+(\, \cdot - \lambda_1,0), {\dot f} _+\phantom{f}\hspace{-.45cm}'\,\,(\, \cdot - \lambda_1,0)$ are all analytic with respect to $z$ around $z=0$, investigating the case distinctions
$\alpha \in [0, \pi) \backslash\{0, \pi/2\}$, $\alpha =0$, $\alpha = \pi/2$,
$f_+(z - \lambda_1,0) \neq 0$, $f_+(z - \lambda_1,0) = 0$,
$f_+'(z - \lambda_1,0) \neq 0$, $f_+'(z - \lambda_1,0) = 0$, etc., one
verifies in each case that the logarithmic $z$-derivative of \eqref{14.4.10} satisfies the hypotheses in Theorem \ref{14.t2.8}, hence the latter applies
with $\varepsilon = 1/2$ to the pairs
$\big(H_{+,\alpha}^{(0)}(\lambda_1), H_{+,\alpha}(\lambda_1)\big)$,
$\alpha \in [0, \pi)$.

More generally, we now replace the pair $(0,q)$ by $(q_1,q_2)$, where $q_j$,
$j=1,2$, satisfy Hypothesis \ref{14.h4.1}, and denote the
corresponding Schr\"odinger operators in $L^2(\bbR_+;dx)$ with $q$
(resp., $u$, $v$) replaced by $q_j$ (resp., $u_j$, $v_j$) by
$H_{+,\alpha,j}$ and similarly, after the shift with $\lambda_1$, by
$H_{+,\alpha,j}(\lambda_1)$, $j=1,2$. Analogously, we denote the
corresponding Jost solutions by $f_{+,j}(z, \, \cdot \,)$, $j=1,2$. This then
yields the following results:

\begin{theorem} \lb{14.t4.2}
Suppose $q_j$, $j=1,2$, satisfy Hypothesis \ref{14.h4.1}. Then,
\begin{align}
& {\det}_{L^2(\bbR_+;dx)} \Big(\ol{(H_{+,\alpha,2}(\lambda_1) - z I_+)^{1/2}
\big(H_{+,\alpha,1}(\lambda_1) - z I_+\big)^{-1}
(H_{+,\alpha,2}(\lambda_1) - z I_+)^{1/2}}\Big)   \no \\
& \quad = {\det}_{L^2(\bbR_+;dx)}
\Big(I_+  + \ol{u_{1,2}
\big(H_{+,\alpha,1}(\lambda_1) - z I_+\big)^{-1} v_{1,2}}\Big)
\no \\
& \quad = \f{\sin(\alpha) f_{+,2}'(z-\lambda_1,0)
+ \cos(\alpha) f_{+,2}(z-\lambda_1,0)}
{\sin(\alpha) f_{+,1}'(z-\lambda_1,0)
+ \cos(\alpha) f_{+,1}(z-\lambda_1,0)},  \lb{14.4.17} \\
& \hspace*{7.2mm}
\alpha \in [0,\pi), \;
z \in \rho\big(H_{+,\alpha ,1}(\lambda_1)\big) \cap \rho(H_{+,\alpha ,2}(\lambda_1)),
\no
\end{align}
where
\begin{align}
\begin{split}
& v_{1,2} = |q_2 - q_1|^{1/2}, \; u_{1,2} = v_{1,2} \, \sgn(q_2 - q_1),   \\
& \quad \text{such that } \, q_2 - q_1 =  u_{1,2} v_{1,2} = v_{1,2}  u_{1,2}.  \lb{14.4.18}
\end{split}
\end{align}
In addition,
\begin{align}
& \zeta(s; H_{+,\alpha,1}(\lambda_1),H_{+,\alpha,2}(\lambda_1))
= e^{is (\pi - \theta)} \pi^{-1} \sin(\pi s) \no \\
& \quad \times \int_0^{\infty} dt \, t^{-s}
\f{d}{dt} \ln\bigg((e^{i\theta}t)^{[m(0;H_{+,\alpha,1}(\lambda_1))
- m(0;H_{+,\alpha,2}(\lambda_1))]}   \lb{14.4.19} \\
& \hspace*{3.2cm} \times
\f{\sin(\alpha) f_{+,2}'(e^{i \theta} t - \lambda_1,0)
+ \cos(\alpha) f_{+,2}(e^{i \theta} t - \lambda_1,0)}
{\sin(\alpha) f_{+,1}'(e^{i \theta} t - \lambda_1,0)
+ \cos(\alpha) f_{+,1}(e^{i \theta} t - \lambda_1,0)}\bigg),  \no \\
& \hspace*{7.1cm} \alpha \in [0,\pi), \; \Re(s) \in (-1/2,1).   \no
\end{align}
\end{theorem}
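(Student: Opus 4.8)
The plan is to establish the two equalities in \eqref{14.4.17} first, and then to read off \eqref{14.4.19} as a direct application of Theorem \ref{14.t2.8} to the pair $(S_1,S_2) = (H_{+,\alpha,1}(\lambda_1), H_{+,\alpha,2}(\lambda_1))$.

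For the first equality in \eqref{14.4.17} I would invoke the abstract machinery of Section \ref{14.s2}. Writing the form perturbation via \eqref{14.4.18} as $q_2 - q_1 = v_{1,2} u_{1,2} = u_{1,2} v_{1,2}$ and setting $A = H_{+,\alpha,1}(\lambda_1)$, $B = H_{+,\alpha,2}(\lambda_1)$, $W = q_2 - q_1$ with the factorization $W_1^* = v_{1,2}$, $W_2 = u_{1,2}$ (legitimate because the real multiplication operators $u_{1,2}, v_{1,2}$ commute), I would check that Hypothesis \ref{14.h2.4} is in force: under Hypothesis \ref{14.h4.1} the short-range decay of $q_2 - q_1$ together with the explicit kernel of the shifted free resolvent yields the Hilbert--Schmidt bounds \eqref{14.2.3A}. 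The first equality is then exactly the determinant identity recorded in the second line of \eqref{14.2.8A}, derived there from the commutation formula together with the cyclicity \eqref{14.2.8AA}.

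For the second equality I would use multiplicativity of perturbation determinants along the chain $q \equiv 0 \rightsquigarrow q_1 \rightsquigarrow q_2$. Abbreviating $A_0 = H_{+,\alpha}^{(0)}(\lambda_1)$, $A_j = H_{+,\alpha,j}(\lambda_1)$, and letting $\Delta_j(z) = {\det}_{L^2(\bbR_+;dx)}\big(I_+ + \ol{u_j (A_0 - z I_+)^{-1} v_j}\big) = \det\big((A_j - z I_+)(A_0 - z I_+)^{-1}\big)$ denote the perturbation determinant of $A_j$ relative to $A_0$, the already established formula \eqref{14.4.14} gives
\[
\Delta_j(z) = \f{\sin(\alpha) f_{+,j}'(z - \lambda_1,0) + \cos(\alpha) f_{+,j}(z - \lambda_1,0)}{\sin(\alpha) i (z - \lambda_1)^{1/2} + \cos(\alpha)}, \quad j = 1,2.
\]
Since $\det\big((A_2 - z I_+)(A_0 - z I_+)^{-1}\big) = \det\big((A_2 - z I_+)(A_1 - z I_+)^{-1}\big)\,\det\big((A_1 - z I_+)(A_0 - z I_+)^{-1}\big)$ (each factor a well-defined perturbation determinant under the present hypotheses), the determinant on the second line of \eqref{14.4.17} equals $\Delta_2(z)/\Delta_1(z)$, whereupon the common free factor $\sin(\alpha) i (z - \lambda_1)^{1/2} + \cos(\alpha)$ cancels and produces the claimed ratio of Jost-type functions.

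Finally, to obtain \eqref{14.4.19} I would apply Theorem \ref{14.t2.8}. Hypothesis \ref{14.h2.7} holds because the resolvent difference is trace class (short-range $q_j$) and, after the shift, $\sigma_{ess}(H_{+,\alpha,j}(\lambda_1)) = [\lambda_1,\infty) \subset (0,\infty)$ with $\lambda_1 > 0$; the trace identity \eqref{14.2.30a} is the specialization of \eqref{14.2.8B}--\eqref{14.2.8A} guaranteed by the first step, with the symmetrized determinant identified through \eqref{14.4.17}. The main obstacle is the decay hypothesis \eqref{14.2.31} with $\varepsilon = 1/2$, i.e.\ showing that the (projection-corrected) resolvent-trace difference is $\Oh(|z|^{-3/2})$ as $|z| \to \infty$ and $\Oh(1)$ as $|z| \to 0$; by \eqref{14.2.30a} this reduces to controlling the logarithmic $z$-derivative of the ratio in \eqref{14.4.17}. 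As $|z| \to \infty$ I would insert the uniform expansions \eqref{14.4.16} for $f_{+,j}, f_{+,j}', \dot f_{+,j}, \dot f_{+,j}'$ and extract the leading cancellation, distinguishing the cases $\alpha \in [0,\pi)\backslash\{0,\pi/2\}$, $\alpha = 0$, $\alpha = \pi/2$ together with the possible vanishing of numerator or denominator; as $|z| \to 0$ I would use the analyticity of all four Jost data around $z=0$ (recalling that $\lambda_1$ is never a threshold eigenvalue). This is precisely the verification already sketched for the single pair $(H_{+,\alpha}^{(0)}(\lambda_1), H_{+,\alpha}(\lambda_1))$ in the paragraph preceding the theorem, and it transfers unchanged to the pair $(H_{+,\alpha,1}(\lambda_1), H_{+,\alpha,2}(\lambda_1))$. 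Once \eqref{14.2.31} is secured, substituting \eqref{14.4.17} into \eqref{14.2.32} reproduces \eqref{14.4.19} verbatim.
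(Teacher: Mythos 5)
Your proposal is correct and follows essentially the same route as the paper's proof: the identities \eqref{14.4.17} are obtained from the abstract symmetrized-determinant formulas \eqref{14.2.8A}--\eqref{14.2.8AA} together with the Jost-function evaluations \eqref{14.4.10}/\eqref{14.4.14}, and \eqref{14.4.19} then follows by checking Hypothesis \ref{14.h2.7} and the decay condition \eqref{14.2.31} with $\varepsilon = 1/2$ via the expansions \eqref{14.4.16} and invoking Theorem \ref{14.t2.8}, exactly as the paper does. The only difference is cosmetic: you make explicit, through multiplicativity of perturbation determinants relative to the free operator $H_{+,\alpha}^{(0)}(\lambda_1)$, the passage from the pair $(0,q)$ to $(q_1,q_2)$ that the paper leaves implicit in the paragraphs preceding the theorem.
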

\begin{proof}
Relations \eqref{14.4.17}, \eqref{14.4.18} follow as summarized in \eqref{14.4.7}--\eqref{14.4.16}
and the two paragraphs preceding Theorem \ref{14.t4.2}. Thus, Theorem \ref{14.t2.8} applies to
the pairs of self-adjoint operators
$(H_{+,\alpha,1}(\lambda_1), H_{+,\alpha,2}(\lambda_1))$, $\alpha \in [0, \pi)$.
\end{proof}

The relative $\zeta$-function regularized determinant now follows immediately from Theorem \ref{14.t2.10}.

Special cases of \eqref{14.4.17} (pertaining to the Dirichlet boundary conditions $\alpha_j=0$, $j=1,2$)
appeared in the celebrated work by Jost and Pais \cite{JP51} and Buslaev and Faddeev \cite{BF60}
(see also \cite{DU11}, \cite{GM03}, \cite{Ne80}, \cite{OY12}, \cite{RS97}).

Up to this point we kept the boundary condition, that is, $\alpha$, fixed and varied the potential coefficient $q$. Next, we keep $q$ fixed, but vary
$\alpha$. Returning to the operator $H_{+,\alpha}$, $\alpha \in [0, \pi)$,
we turn to its underlying quadratic form $Q_{H_{+,\alpha}}$ next,
\begin{align}
& Q_{H_{+,\alpha}}(f,g)
= \int_0^{\infty} dx \big[\ol{f'(x)} g'(x) + q(x) \ol{f(x)} g(x)\big]
-  \cot(\alpha) \ol{f(0)} g(0),   \no \\
&  f, g \in \dom(Q_{H_{+,\alpha}}) = \dom\big(|H_{+,\alpha}|^{1/2}\big) = H^1(\bbR_+)      \lb{14.4.20} \\
& \quad = \big\{h\in L^2(\bbR_+; dx) \,|\,
h \in AC ([0,b]) \, \text{for all $b>0$}; \, h' \in L^2(\bbR_+; dx)\big\},   \no \\
& \hspace*{8.95cm}  \alpha \in (0,\pi),   \no \\
& Q_{H_{+,0}}(f,g) = \int_0^{\infty} dx \big[\ol{f'(x)} g'(x)
+ q(x) \ol{f(x)} g(x)\big],    \no \\
& f, g \in \dom(Q_{H_{+,0}}) = \dom\big(|H_{+,0}|^{1/2}\big)
= H^1_0(\bbR_+)     \lb{14.4.21} \\
& \quad = \big\{h\in L^2(\bbR_+; dx) \,|\,
h \in AC([0,b]) \, \text{for all $b>0$}; \, h(0)=0;
\, h' \in L^2(\bbR_+; dx)\big\}.  \no
\end{align}
Moreover, introducing the regular solution $\phi_{\alpha}(z, \, \cdot \,)$
associated with $H_{+,\alpha}$ satisfying $\tau_+ y = z y$, $z \in \bbC$, on $\bbR_+$, and
\begin{equation}
\sin(\alpha) \phi_{\alpha}'(z,0) + \cos(\alpha) \phi_{\alpha}(z,0) = 0,
\quad \alpha \in [0,\pi), \; z \in \bbC,   \lb{14.4.22}
\end{equation}
one infers
\begin{align}
\begin{split}
\phi_{\alpha}(z,x)=\phi_{\alpha}^{(0)}(z,x)
+ \int_0^x dx' \, z^{-1/2} \sin(z^{1/2}(x-x')) q(x')
\phi_{\alpha}(z,x'),&    \lb{14.4.23} \\
z \in \bbC, \; \Im(z^{1/2}) \geq 0, \; x \geq 0,&
\end{split}
\end{align}
with
\begin{align}
\begin{split}
\phi_{\alpha}^{(0)}(z,x) = \cos(\alpha) z^{-1/2} \sin(z^{1/2}x)
- \sin(\alpha)\cos(z^{1/2}x),&  \\
z \in \bbC, \; \Im(z^{1/2}) \geq 0, \; x \geq 0.&
\end{split}
\end{align}
Given the solutions $\phi_{\alpha}(z, \, \cdot \,)$, $f_+(z, \, \cdot \,)$ of
$\tau_+ y = z$, $z \in \bbC$, the resolvent of
$H_{+,\alpha}$ is given by
\begin{align}
\begin{split}
\big((H_{+,\alpha}-z I_+)^{-1}f \big)(x)
=\int_0^{\infty}dx' \,
G_{+,\alpha}(z,x,x')f(x'),&    \lb{14.4.25} \\
z\in \rho(H_{+,\alpha}), \; x \geq 0,
\; f \in L^2(\bbR_+; dx),&
\end{split}
\end{align}
with the Green's function $G_{+,\alpha}$ of $H_{+,\alpha}$
expressed in terms of $\phi_{\alpha}$ and $f_+$ by
\begin{align}
\begin{split}
& G_{+,\alpha}(z,x,x')
= (H_{+,\alpha} - z I_+)^{-1}(x,x')    \lb{14.4.26} \\
& \quad = \frac{1}{W(f_+(z,\cdot), \phi_{\alpha}(z,\cdot))} \begin{cases}
\phi_{\alpha}(z,x) \, f_+(z,x'), & 0\leq x\leq x' < \infty, \\
\phi_{\alpha}(z,x') \, f_+(z,x), & 0\leq x' \leq x < \infty, \end{cases} \\
& \hspace*{7.93cm} z \in \rho(H_{+,\alpha}).
\end{split}
\end{align}
In the special case $\alpha = 0$ one verifies
\begin{equation}
W(f_+(z,\cdot), \phi_{0}(z,\cdot)) = f_+(z,0), \quad z \in \rho(H_{+,0}),  \lb{14.4.26a}
\end{equation}
with $f_+(z,0)$ the well-known Jost function.

Next, we compare the half-line Green's functions $G_{+,\alpha_1}$
and $G_{+,\alpha_2}$, that is, we investigate the integral kernels
associated with a special case of Krein's formula for resolvents
(cf.\ \cite[\& 106]{AG81}): Assume $\alpha_1,\alpha_2 \in [0,\pi)$, with
$\alpha_1 \neq \alpha_2$. Then,
\begin{align}
\begin{split}
&G_{+,\alpha_2}(z,x,x')=G_{+,\alpha_1}(z,x,x')
- \f{\psi_{+,\alpha_1}(z,x) \psi_{+,\alpha_1}(z,x')}{\cot(\alpha_2 - \alpha_1)
+ m_{+,\alpha_1}(z)}
,      \lb{14.4.27} \\
&\hspace*{3.1cm}
z \in \rho(H_{+,\alpha_1})\cap\rho(H_{+,\alpha_2}),
\; x,x' \in [0,\infty),
\end{split}
\end{align}
and
\begin{equation}
\int_0^{\infty} dx \, \psi_{+,\alpha}(z,x)^2 = \dot m_{+,\alpha}(z),
\end{equation}
implying
\begin{align}
\begin{split}
& {\tr}_{L^2(\bbR_+; dx)} \big((H_{+,\alpha_2}(\lambda_1) - z I_+)^{-1}
- (H_{+,\alpha_1}(\lambda_1) - z I_+)^{-1}\big)     \\
& \quad = - \f{d}{dz} \ln[\cot(\alpha_2 - \alpha_1) + m_{+,\alpha_1}(z)], \quad
z \in \rho(H_{+,\alpha_1})\cap\rho(H_{+,\alpha_2}),    \lb{14.4.29}
\end{split}
\end{align}
according to Lemma~2.2 and (A.44) in \cite{GS96}. Here $\psi_{+,\alpha}(z, \, \cdot \,)$ and
$m_{+,\alpha}(z)$ are the Weyl--Titchmarsh solution and $m$-function corresponding to
$H_{+,\alpha}$. More precisely,
\begin{equation}
\psi_{+,\alpha}(z, \, \cdot \,) = \theta_{\alpha} (z, \, \cdot \,) + m_{+,\alpha}(z) \phi_{\alpha} (z, \, \cdot \,),
\quad z \in \rho(H_{+,\alpha}),    \lb{14.4.30}
\end{equation}
where
\begin{align}
\begin{split}
\theta_{\alpha}(z,x)=\theta_{\alpha}^{(0)}(z,x)
+ \int_0^x dx' \, z^{-1/2} \sin(z^{1/2}(x-x')) q(x')
\theta_{\alpha}(z,x'),&    \lb{14.4.31} \\
z \in \bbC, \; \Im(z^{1/2}) \geq 0, \; x \geq 0,&
\end{split}
\end{align}
with
\begin{align}
\begin{split}
\theta_{\alpha}^{(0)}(z,x) = \cos(\alpha) \cos(z^{1/2} x) + \sin(\alpha)
z^{-1/2} \sin(z^{1/2} x),&   \\
z \in \bbC, \; \Im(z^{1/2}) \geq 0, \; x \geq 0.&
\end{split}
\end{align}

Due to the limit point property of $\tau_+$ at $+\infty$, one actually has
\begin{equation}
\psi_{+,\alpha}(z,\, \cdot \,) = C_{\alpha}(z) f_+(z,\, \cdot \,), \quad z \in \rho(H_{+,\alpha}),
\end{equation}
for some $z$-dependent complex-valued constant $C_{\alpha}(z)$. Actually,
since $\psi_{+,\alpha} (z,0) = C_\alpha f_+ (z,0)$, one can
show (using (A.18) in \cite{GS96}) that
\begin{equation}
C_\alpha (z) = \frac{\cos (\alpha) - \sin(\alpha)  m_{+,\alpha}(z) }{f_+ (z,0)} = \frac 1 {\cos (\alpha) f_+ (z,0) + \sin(\alpha)  f_+ ' (z,0)}.
\end{equation}
Similarly,
\begin{equation}
m_{+,0}(z) = \psi'_{+,0} (z,0)/\psi_{+,0} (z,0) = f'_+(z,0)/f_+(z,0), \quad z \in \rho(H_{+,\alpha}),
\end{equation}
and
\begin{align}
\begin{split}
m_{+,\alpha}(z) = \f{- \sin(\alpha) + \cos(\alpha) m_{+,0}(z)}{\cos(\alpha) + \sin(\alpha) m_{+,0}(z)}
=\f{\cos(\alpha) f_+'(z,0) - \sin(\alpha) f_+(z,0)}{\sin(\alpha) f_+'(z,0) + \cos(\alpha) f_+(z,0)},& \\
z \in \rho(H_{+,\alpha}).&
\end{split}
\end{align}
Moreover, one verifies that
\begin{align}
\cos(\alpha_2 - \alpha_1) + \sin(\alpha_2 - \alpha_1) m_{+,\alpha_1}(z)
=\f{\sin(\alpha_2) f_+'(z,0) + \cos(\alpha_2) f_+(z,0)}{\sin(\alpha_1) f_+'(z,0) + \cos(\alpha_1) f_+(z,0)}.
\lb{14.4.33}
\end{align}
Combining \eqref{14.4.29} and \eqref{14.4.33} yields
\begin{align}
& {\tr}_{L^2(\bbR_+; dx)} \big((H_{+,\alpha_2}(\lambda_1) - z I_+)^{-1}
- (H_{+,\alpha_1}(\lambda_1) - z I_+)^{-1}\big)    \no \\
& \quad = - \f{d}{dz} \ln\bigg(\f{\sin(\alpha_2) f_+'(z - \lambda_1,0)
+ \cos(\alpha_2) f_+(z - \lambda_1,0)}{\sin(\alpha_1) f_+'(z - \lambda_1,0)
+ \cos(\alpha_1) f_+(z - \lambda_1,0)}\bigg),    \lb{14.4.34} \\
& \hspace*{1.6cm}
z \in \rho(H_{+,\alpha_1}(\lambda_1))\cap\rho(H_{+,\alpha_2}(\lambda_1)),
\; \alpha_1, \alpha_2 \in [0,\pi).    \no
\end{align}
In particular,
$\big[(H_{+,\alpha_2} - z I_+)^{-1} - (H_{+,\alpha_1} - z I_+)^{-1}\big]$
is rank-one and hence trace class. Moreover, since by \eqref{14.4.20}
\begin{equation}
f_+(z, \, \cdot \,) \in \dom\big(|H_{+,\alpha}|^{1/2}\big) = H^1(\bbR_+),
\quad \alpha \in (0,\pi),     \lb{14.4.35}
\end{equation}
Hypothesis \ref{14.h2.4a} and hence relations
\eqref{14.2.23a}, \eqref{14.2.24a} are now satisfied for the pair
$(H_{+,\alpha_1}(\lambda_1), H_{+,\alpha_2}(\lambda_1))$ for
$\alpha_1 \in [0,\pi)$, $\alpha_2 \in (0,\pi)$, implying the following result:

\begin{theorem} \lb{14.t4.3}
Suppose $q_j$, $j=1,2$, satisfy Hypothesis \ref{14.h4.1}. Then,
\begin{align}
& {\tr}_{L^2(\bbR_+; dx)} \big((H_{+,\alpha_2}(\lambda_1) - z I_+)^{-1}
- (H_{+,\alpha_1}(\lambda_1) - z I_+)^{-1}\big)    \no \\
& \quad = - \f{d}{dz} \ln\Big(
{\det}_{L^2(\bbR_+;dx)} \Big(\big\{(H_{+,\alpha_2}(\lambda_1) - z I_+)^{1/2}
\big(H_{+,\alpha_1}(\lambda_1) - z I_+\big)^{-1}    \no \\
& \hspace*{4.15cm} \times
(H_{+,\alpha_2}(\lambda_1) - z I_+)^{1/2}\big\}^c\Big)\Big)  \lb{14.4.36}  \\
& \quad = - \f{d}{dz} \ln\bigg(\f{\sin(\alpha_2) f_+'(z - \lambda_1,0)
+ \cos(\alpha_2) f_+(z - \lambda_1,0)}{\sin(\alpha_1) f_+'(z - \lambda_1,0)
+ \cos(\alpha_1) f_+(z - \lambda_1,0)}\bigg),    \lb{14.4.37} \\
& \quad \;
z \in \rho(H_{+,\alpha_1}(\lambda_1))\cap\rho(H_{+,\alpha_2}(\lambda_1)),
\; \alpha_1 \in [0,\pi), \; \alpha_2 \in (0,\pi), \; \alpha_1 \neq \alpha_2,   \no
\end{align}
temporarily abbreviating the operator closure symbol by $\{\cdots\}^c$
due to lack of space in \eqref{14.4.36}. In addition,
\begin{align}
& \zeta (s; H_{+,\alpha_1} (\lambda_1) , H_{+, \alpha_2} (\lambda_1)) = e^{is (\pi -\theta )} \pi^{-1}
\sin (\pi s) \no \\
& \quad \times \int_0^\infty dt \, t^{-s} \frac d {dt} \ln \left( (t e^{i\theta}) ^{[ m(0,H_{+,\alpha_1} (\lambda_1))-m(0,H_{+,\alpha_2} (\lambda_1))]} \right.    \lb{14.4.38} \\
& \quad \times \left. \frac{\sin (\alpha_2) f_+' (te^{i\theta} - \lambda_1,0) + \cos (\alpha _2) f_+ (te^{i\theta} -\lambda_1,0)}
{\sin (\alpha_1) f_+' (te^{i\theta} - \lambda_1,0) + \cos (\alpha _1) f_+ (te^{i\theta} -\lambda_1,0)} \right) , \no \\
&\hspace*{2.6cm} \alpha_j \in (0,\pi), \; j=1,2, \,\,\Re(s) \in (-1/2,1).   \no
\end{align}
\end{theorem}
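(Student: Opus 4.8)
The plan is to assemble the theorem from three ingredients, two of which are essentially already in place in the discussion preceding the statement. The central identity \eqref{14.4.37}, equating the trace of the resolvent difference with $-\frac{d}{dz}\ln$ of the boundary-weighted Jost ratio, is nothing but \eqref{14.4.34}; that relation is obtained from Krein's resolvent formula \eqref{14.4.27} (which exhibits the difference as rank one), the ensuing $m$-function trace formula \eqref{14.4.29}, and the conversion \eqref{14.4.33} of the Herglotz function $m_{+,\alpha_1}$ into Jost data. Thus the only genuinely new work is (a) recognizing the Jost ratio as the symmetrized Fredholm perturbation determinant in \eqref{14.4.36}, and (b) deducing the relative $\zeta$-function representation \eqref{14.4.38} by invoking Theorem \ref{14.t2.8}.

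For step (a) I would verify that Hypothesis \ref{14.h2.4a} holds for the pair $(H_{+,\alpha_1}(\lambda_1), H_{+,\alpha_2}(\lambda_1))$, taking $A = H_{+,\alpha_1}(\lambda_1)$ and $B = H_{+,\alpha_2}(\lambda_1)$. The form-domain inclusion \eqref{14.2.21a} is immediate from \eqref{14.4.20}--\eqref{14.4.21}: since $\alpha_2 \in (0,\pi)$ one has $\dom(|B|^{1/2}) = H^1(\bbR_+)$, which contains $H^1_0(\bbR_+) = \dom(|A|^{1/2})$ when $\alpha_1 = 0$ and equals $\dom(|A|^{1/2})$ when $\alpha_1 \in (0,\pi)$. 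The trace-class requirement \eqref{14.2.22a} follows at once from the rank-one structure exhibited in \eqref{14.4.27}. With Hypothesis \ref{14.h2.4a} in force, relation \eqref{14.2.24a} yields the first equality in the theorem and identifies the symmetrized determinant in \eqref{14.4.36}; comparing with \eqref{14.4.34} then forces that determinant to equal the Jost ratio, giving \eqref{14.4.37}.

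For step (b) I would apply Theorem \ref{14.t2.8} to the same pair. Hypothesis \ref{14.h2.7} is satisfied because the resolvent difference is rank one, hence trace class, and because the shift by $\lambda_1 I_+$ places the common essential spectrum in $[\lambda_1, \infty)$ with $\lambda_1 > 0$; the structural identity \eqref{14.2.30a} is exactly \eqref{14.4.36}. The one remaining hypothesis is the decay condition \eqref{14.2.31}, and verifying it is where the real effort lies. Using the uniform asymptotic expansions \eqref{14.4.16} for $f_+(\, \cdot - \lambda_1,0)$, $f_+'(\, \cdot - \lambda_1,0)$ and their $z$-derivatives, I would examine the large-$|z|$ behaviour of $\frac{d}{dz}\ln$ of the Jost ratio: for $\alpha_1,\alpha_2 \neq 0$ both numerator and denominator are $\sim i z^{1/2}\sin(\alpha_j)$, so the ratio tends to the nonzero constant $\sin(\alpha_2)/\sin(\alpha_1)$, $\ln$ of the ratio is constant plus $\Oh(|z|^{-1/2})$, and its $z$-derivative is $\Oh(|z|^{-3/2})$, giving the $\Oh(|z|^{-1-\varepsilon})$ bound with $\varepsilon = 1/2$. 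The small-$|z|$ bound $\Oh(1)$ follows from the analyticity of the Jost data around $z = 0$ noted in the paragraph preceding Theorem \ref{14.t4.2}, with any zero eigenvalue of $H_{+,\alpha_j}(\lambda_1)$ absorbed into the projections $[I_+ - P(0;\,\cdot\,)]$ appearing in \eqref{14.2.31}. Once \eqref{14.2.31} is in hand, formula \eqref{14.2.32} of Theorem \ref{14.t2.8}, with the determinant rewritten via \eqref{14.4.36}, is precisely \eqref{14.4.38}.

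The main obstacle will be the asymptotic verification of \eqref{14.2.31}, which requires the same careful case analysis flagged for Theorem \ref{14.t4.2}: one must separate the cases $\alpha_1 = 0$ versus $\alpha_1 \in (0,\pi)$ (the boundary-weighted combination loses its leading $i z^{1/2}$ term precisely when $\sin(\alpha_j) = 0$), track the possible vanishing of $f_+(z-\lambda_1,0)$ or $f_+'(z-\lambda_1,0)$ near $z=0$, and confirm that the resulting logarithmic derivative is integrable against $t^{-s}$ over $(0,\infty)$ for $\Re(s) \in (-1/2,1)$. All remaining steps are bookkeeping once the abstract machinery of Section \ref{14.s2} is invoked.
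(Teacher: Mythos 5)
Your proposal is correct and follows essentially the same route as the paper, whose proof consists precisely of summarizing the discussion \eqref{14.4.22}--\eqref{14.4.35} (Krein's formula \eqref{14.4.27}, the trace formula \eqref{14.4.29}, the $m$-function/Jost conversion \eqref{14.4.33}--\eqref{14.4.34}, and the verification of Hypothesis \ref{14.h2.4a}) and then invoking Theorem \ref{14.t2.8} for $\alpha_1,\alpha_2 \in (0,\pi)$. The only detail you gloss over is that the trace-class condition \eqref{14.2.22a} does not follow from rank-one-ness of the resolvent difference alone but requires $f_+(z,\,\cdot\,) \in H^1(\bbR_+) = \dom\big(|H_{+,\alpha_2}(\lambda_1)|^{1/2}\big)$, i.e.\ \eqref{14.4.35}, which is exactly why the theorem demands $\alpha_2 \in (0,\pi)$ while allowing $\alpha_1 \in [0,\pi)$.
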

\begin{proof}
Relations \eqref{14.4.36}, \eqref{14.4.37} summarize the discussion in \eqref{14.4.22}--\eqref{14.4.35}.
Applying Theorem \ref{14.t2.8} in the case $\alpha_1, \alpha _2 \in (0,\pi)$ then yields
\eqref{14.4.38}.
\end{proof}

The relative $\zeta$-function regularized determinant again follows immediately from Theorem \ref{14.t2.10},
\begin{align}
& \zeta ' (0; H_{+,\alpha_1} (\lambda_1) , H_{+,\alpha_2} (\lambda_1)) =
- \lim_{t \downarrow 0} \f{d}{dt} \ln\bigg(\big(t e^{i \theta}\big)^{[m(0,H_{+,\alpha_1} (\lambda_1))
- m(0,H_{+,\alpha_2} (\lambda_1))]}     \no \\[1mm]
& \quad \times \f{\sin(\alpha_2) f_+' (t e^{i \theta} - \lambda_1,0)
+ \cos(\alpha_2) f_+(t e^{i \theta} - \lambda_1,0)}{\sin(\alpha_1) f_+' (t e^{i \theta} - \lambda_1,0)
+ \cos(\alpha_1) f_+(t e^{i \theta} - \lambda_1,0)}\bigg).
\end{align}

\begin{remark}
The case $\alpha_1 =0$, $\alpha_2 \in (0,\pi )$, is more involved in that the representation
\eqref{14.4.38}  is only valid for $\Re (s) \in (0,1)$. This is due to the fact that
\begin{equation}
\f{\sin(\alpha_2) f_+'(z - \lambda_1,0) + \cos(\alpha_2) f_+(z - \lambda_1,0)}{f_+(z - \lambda_1,0)}
\underset{|z|\to\infty}{=} \Oh\big(|z|^{1/2}\big),
\end{equation}
and hence assumption \eqref{14.2.31} is not satisfied.
One then proceeds as follows.
 Let $H_{+,\alpha}^{(0)} (\lambda_1)$ denote the case with vanishing potential $q$. We rewrite
\begin{align}
& {\tr}_{L^2(\bbR_+; dx)} \big((H_{+,\alpha_2}(\lambda_1) - z I_+)^{-1}
- (H_{+,0}(\lambda_1) - z I_+)^{-1}\big)    \no \\
& \quad ={\tr}_{L^2(\bbR_+; dx)} \big((H_{+,\alpha_2}(\lambda_1) - z I_+)^{-1}
- (H_{+,\alpha_2}^{(0)} (\lambda_1) - z I_+)^{-1}\big)    \no \\
& \qquad +{\tr}_{L^2(\bbR_+; dx)} \big((H_{+,0}^{(0)} (\lambda_1) - z I_+)^{-1}
- (H_{+,0} (\lambda_1) - z I_+)^{-1}\big)    \no \\
& \qquad +{\tr}_{L^2(\bbR_+; dx)} \big((H_{+,\alpha_2}^{(0)} (\lambda_1) - z I_+)^{-1}
- (H_{+,0}^{(0)} (\lambda_1) - z I_+)^{-1}\big).     \lb{14.4.46}
\end{align}
For the first two of the three
contributions on the right-hand side of \eqref{14.4.46} the relative $\zeta$-determinants
can be computed from Theorem \ref{14.t4.2}. For the third contribution more care is needed as the subleading large-$|z|$ behavior differs
due to one boundary condition being Dirichlet and the other being Robin. The starting point is the representation
\begin{align}
\begin{split}
& \zeta \Big(s; H_{+,0}^{(0)} (\lambda_1),H_{+,\alpha_2}^{(0)}(\lambda_1)\Big)
= e^{is (\pi - \theta)} \pi^{-1} \sin(\pi s)       \\
& \quad \times \int_0^\infty dt \, t^{-s} \frac d {dt} \ln \left(\sin (\alpha_2)
i \big(t e^{i\theta} - \lambda_1\big)^{1/2} + \cos (\alpha _2)\right).
\end{split}
\end{align}
Along the lines of \eqref{14.3.subasym} one rewrites this as
\begin{align}
\begin{split}
& \zeta \Big(s; H_{+,0}^{(0)} (\lambda_1), H_{+,\alpha_2}^{(0)} (\lambda_1)\Big)  = e^{is (\pi - \theta )} \pi ^{-1} \sin (\pi s) \int_0^\infty dt \, t^{-s}     \\
& \quad \times \frac d {dt} \ln \left(\frac{\sin (\alpha _2) i \big(t e^{i\theta } - \lambda_1\big)^{1/2} + \cos (\alpha _2)}{\sin (\alpha_2) i \big(t e^{i\theta} - \lambda_1\big)^{1/2} }\right) + \zeta^{asym} (s),
\end{split}
\end{align}
where, using \eqref{14.3.intform},
\begin{align}
\zeta ^{asym} (s) &= e^{is (\pi - \theta )} \pi^{-1} \sin (\pi s) \int_0^\infty dt \, t^{-s} \frac d {dt} \ln \left(i \sin (\alpha _2) \big(te^{i\theta} - \lambda_1\big)^{1/2} \right)    \no \\
& = \lambda_1^{-s}/2.
\end{align}
The relative $\zeta$-determinant for the last term in \eqref{14.4.46} then follows from
\begin{align}
\zeta ' \Big(0; H_{+,0}^{(0)} (\lambda_1) , H_{+,\alpha_2}^{(0)} (\lambda_1)\Big) &= \Re \left( - \ln  \bigg(\frac{ \sin (\alpha_2) i (- \lambda_1)^{1/2} + \cos (\alpha_2)}{ \sin (\alpha_2) i (-\lambda_1)^{1/2}}\bigg)
- \ln \big(\lambda_1^{1/2}\big)\right) \nonumber\\
& = - \ln \big(\big|\lambda_1^{1/2} -\cot (\alpha_2)\big|\big).
\end{align}
${}$ \hfill $\diamond$
\end{remark}

One notes that formally, \eqref{14.4.37} extends to the trivial case $\alpha_1 = \alpha_2$.

For the case of a strongly singular potential on the half-line with $x^{-2}$-type singularity at $x=0$
we refer to \cite{KLP06}.

\medskip
\noindent {\bf Acknowledgments.}
We are indebted to Christer Bennewitz for very helpful discussions.


\end{document}